\newtheorem{thm}{Theorem}
\newtheorem{lem}{Lemma}
\newtheorem{prop}{Proposition}
\DeclareMathOperator \tr {tr}
\DeclareMathOperator \re {Re}
\DeclareMathOperator \im {Im}
\numberwithin{equation}{section}
\numberwithin{lem}{section}
\numberwithin{cor}{section}
\numberwithin{prop}{section}
\newcommand{\Real}{\mathbb{R}}
\newcommand{\Complex}{\mathbb{C}}
\newcommand{\Integers}{\mathbb{Z}}
\newcommand{\Natural}{\mathbb{N}}
\newcommand{\Sphere}{\mathbb{S}}
\newcommand{\pr}{\mathcal{P}}
\newcommand{\T}{T}
\newcommand{\loc}{\operatorname{loc}}
\newcommand{\dtn}{\operatorname{DtN}}
\newcommand{\restrict}{\upharpoonright}
\newcommand{\ext}{\mathcal{E}}
\newcommand{\mch}{\mathcal{H}}
\keywords{resolvent, scattering matrix, scattering phase, Dirichlet boundary conditions, capacity}
\subjclass[2020]{35P25, 47A40, 35J25}
\title[Low energy scattering asymptotics for planar obstacles] {Low energy scattering asymptotics for planar obstacles}
\author{T. J. Christiansen and K. Datchev}
\address{Department of Mathematics, University of Missouri, Columbia, MO 65211 USA}
\email{christiansent@missouri.edu}
\address{Department of Mathematics, Purdue University, West Lafayette, IN 47907 USA}
\email{kdatchev@purdue.edu}
\begin{document}
\begin{abstract}
We compute low energy asymptotics for the resolvent of a planar obstacle, and deduce asymptotics for the corresponding scattering matrix, scattering phase, and exterior Dirichlet-to-Neumann operator. We use an identity of Vodev to relate the obstacle resolvent to the free  resolvent and an identity of Petkov and Zworski to relate the scattering matrix to the resolvent. The leading singularities are given in terms of the  obstacle's logarithmic capacity or Robin constant.  
We expect these results to hold for more general compactly supported perturbations of the Laplacian on $\mathbb R^2$, with the definition of the Robin constant suitably modified, 
under a generic assumption that the spectrum is regular at zero.
\end{abstract}
\maketitle

\section{Introduction}

\subsection{Main results} Consider the Dirichlet Laplacian $-\Delta$ on a planar exterior domain $\Omega = \mathbb R^2 \setminus \mathscr O$, where $\mathscr O \subset \mathbb R^2$ is a compact set. We study three fundamental objects in the scattering theory of $-\Delta$ on $\Omega$: the 
resolvent, the scattering matrix, and the scattering phase.
%Three fundamental objects in the scattering theory of $-\Delta$ on~$\Omega$ are: the resolvent, the scattering matrix, and the scattering phase. 
Each is a function of the frequency~$\lambda$. Our main results are uniformly convergent series expansions for all three objects near $\lambda=0$. We also deduce asymptotics for the Dirichlet-to-Neumann operator and for its lowest eigenvalue.

We begin with the resolvent $R(\lambda)$, defined for $\im \lambda>0$ to be the operator which takes $f \in L^2(\Omega)$ to the unique $u \in \mathcal D := \{u \in H^1_0(\Omega) \colon \Delta u \in L^2(\Omega)\}$ solving $(-\Delta - \lambda^2)u=f$.
%\begin{equation}\label{e:resodef}
% R(\lambda):= (-\Delta - \lambda^2)^{-1} \colon L^2(\Omega) \to \mathcal D := \{u \in H^1_0(\Omega) \colon \Delta u \in L^2(\Omega)\},% \qquad \text{ when } \im \lambda>0.
%\end{equation}
For every $\chi \in C_0^\infty(\mathbb R^2),$ the product $\chi R(\lambda) \chi$ extends % continuously to the closed upper half plane $\im \lambda \ge 0$. Further, $\chi R(\lambda) \chi$ extends 
meromorphically as an operator-valued function of $\lambda$ %from   the upper half plane 
to $\Lambda$, the Riemann surface of the logarithm: see Section~\ref{s:not} for more on the notation used here and below.

For our main results we assume that $\mathscr O$ is  not polar, i.e. that $C_0^\infty(\Omega)$ is not dense in $H^1(\mathbb R^2).$   For example, by Lemma \ref{l:nonpgeo}, it is enough if $\mathscr{O}$ contains a line segment.

\begin{thm}\label{t:res} Suppose $\mathscr{O}$ is not polar. Then there are operators $B_{2j,k} \colon L^2_{c}(\Omega) \to \mathcal D_{\text{loc}}$ (i.e. mapping compactly supported functions in $L^2(\Omega)$ to functions which are locally in $\mathcal D$) and a constant $a$, such that, for every $\chi \in C_0^\infty(\mathbb R^2)$, we have
\begin{equation}\label{e:resexp}\begin{split}
\chi R(\lambda) \chi &=  \sum_{j=0}^\infty   \sum_{k=-j-1}^{j} \chi B_{2j,k} \chi  \lambda^{2j} (\log \lambda-a)^k \\
&= \chi B_{0,0} \chi + \chi  B_{0,-1} \chi (\log \lambda -a)^{-1} + \chi  B_{2,1} \chi  \lambda^2 (\log \lambda -a)  + \cdots,
\end{split}\end{equation}
with the series converging absolutely in the space of bounded operators $L^2(\Omega) \to \mathcal D$,  uniformly on sectors near zero.
\end{thm}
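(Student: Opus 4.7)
My plan is to combine an explicit small-$\lambda$ expansion of the free resolvent $R_0(\lambda)$ on $\mathbb R^2$ with Vodev's identity, which reduces the computation to inverting a compact perturbation of the identity built from $R_0(\lambda)$. The expansion of $R_0(\lambda)$ follows from its integral kernel $\tfrac{i}{4}H_0^{(1)}(\lambda|x-y|)$: plugging in the small-argument series for $H_0^{(1)}$ and absorbing the universal constant $\log 2 - \gamma - i\pi/2$ into $a$ yields, for any cutoff $\chi$,
\begin{equation*}
\chi R_0(\lambda) \chi = \sum_{j \geq 0} \lambda^{2j} \bigl[ \chi S_{2j} \chi + (\log \lambda - a) \chi T_{2j} \chi \bigr],
\end{equation*}
an absolutely convergent series whose coefficient operators have explicit polynomial (or polynomial times $\log$) kernels in $|x-y|^{2j}$. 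Inserting this into Vodev's identity, which schematically writes $R(\lambda) = \mathcal{E}(\lambda)(I+K(\lambda))^{-1}\mathcal{F}(\lambda) + \mathcal{G}(\lambda)$ with $\mathcal{E},\mathcal{F},\mathcal{G},K$ built from $R_0$, each of these operators inherits a double power-log expansion; the remaining task is to invert $I+K(\lambda)$ near $\lambda=0$.

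This inversion is the main obstacle. At $\lambda=0$ the operator $I+K(0)$ generically has a one-dimensional kernel spanned by the equilibrium charge of $\mathscr O$, and the non-polarity hypothesis is exactly what guarantees that this kernel is one-dimensional (rather than larger) and that the associated Robin constant is finite, i.e.\ $e^{-a}>0$. Writing $I+K(\lambda) = A + (\log\lambda - a)B + O(\lambda^2)$ with $A$ Fredholm of index zero and $B$ acting as an isomorphism from $\ker A$ to $\operatorname{coker} A$, a Schur-complement computation inverts the two-term part as
\begin{equation*}
\bigl( A + (\log\lambda - a)B \bigr)^{-1} = A^\sharp + (\log \lambda - a)^{-1} \Pi,
\end{equation*}
where $\Pi$ is a rank-one projector onto $\ker A$ and $A^\sharp$ is a parametrix on its complement. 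This is where the negative powers of $\log \lambda - a$ first appear and where the Robin constant enters: the shift $-a$ is tuned so that the pairing between $\ker A$ and $\operatorname{coker} A$ mediated by $B$ is nonzero.

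The full inverse is then assembled by a Neumann series around this two-term inverse in which each factor contributes at least one power of $\lambda^2$, so at order $\lambda^{2j}$ at most $j$ Neumann factors enter. Inductive bookkeeping then shows that the coefficient of $\lambda^{2j}$ is a Laurent polynomial in $(\log \lambda - a)$ of positive degree at most $j$ (from Cauchy products of $j$ free-resolvent factors, each supplying at most one log factor) and of pole order at most $j+1$ (the $-1$ from the initial leading inverse plus at most $-j$ from $j$ re-insertions of $\Pi$), matching the range $-j-1 \le k \le j$. Recomposing with the bracketing factors $\mathcal{E}, \mathcal{F}, \mathcal{G}$ preserves this range and yields \eqref{e:resexp}. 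Convergence in the operator norm $L^2 \to \mathcal D$, uniformly on sectors near zero, is then straightforward: on such sectors $|\log\lambda-a|$ grows only logarithmically in $|\lambda|^{-1}$ while $|\lambda|^{2j}$ decays polynomially, and the kernels of $S_{2j}, T_{2j}$ involve $|x-y|^{2j}/(2j)!$-type factors on the compact support of $\chi$, giving a geometric-in-$j$ bound on $\|B_{2j,k}\|$ sufficient to dominate the double series.
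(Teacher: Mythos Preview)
Your overall strategy matches the paper's: derive the free-resolvent series from the Hankel expansion, feed it into Vodev's identity, and reduce to inverting an operator of the form $I-K(\lambda)$ whose only obstruction near $\lambda=0$ is a rank-one logarithmic piece. Your Schur-complement inversion is essentially the paper's explicit rank-one (Sherman--Morrison) step, rephrased in Fredholm language, and your index bookkeeping for the range $-j-1\le k\le j$ is correct.

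There is, however, a technical slip and a genuine gap.

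The slip: you write that ``$I+K(0)$ has a one-dimensional kernel spanned by the equilibrium charge of $\mathscr O$,'' but in Vodev's identity $K(\lambda)$ inherits the $\log\lambda$ term from $R_0(\lambda)$, so $K(0)$ does not exist. (The equilibrium-measure picture belongs to the boundary layer-potential formulation, not to Vodev's volumetric one.) The paper handles this by writing $K(\lambda)=\tilde K(\lambda)+A(\lambda)$ with $A(\lambda)=(\log\lambda-\log z)(w\otimes 1)K_1$ the explicit rank-one logarithmic part; then $\tilde K$ has a limit at $0$, and by choosing the auxiliary point $z$ small one makes $\|\tilde K(0)\|<1$, so $D(\lambda)=(I-\tilde K(\lambda))^{-1}$ exists by a plain Neumann series. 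The remaining factor $I-A(\lambda)D(\lambda)$ is a rank-one perturbation governed by the single scalar $\alpha(\lambda)=\int K_1 D(\lambda)w$.

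The gap: the entire content of the non-polarity assumption is the claim $\alpha_{00}:=\alpha(0)\ne 0$, which in your language is the nondegeneracy of the pairing $B\colon\ker A\to\operatorname{coker}A$. You assert this but do not prove it, and it is the heart of the argument. The paper's proof (Lemma~\ref{l:a00}) shows that if $\alpha_{00}=0$ the resolvent expansion would force the existence of a bounded nontrivial harmonic function in $\mathcal D_{\mathrm{loc}}$ (explicitly, $u_A(x)\to -1/2\pi$ at infinity), which Lemma~\ref{l:noresat0} rules out exactly when $\mathscr O$ is not polar. Once $\alpha_{00}\ne 0$ is known, $a$ is not ``tuned'' independently but is forced: $a=\log z+\alpha_{00}^{-1}$ is the zero of $1-(\log\lambda-\log z)\alpha_{00}$, and the expansion of $[1-(\log\lambda-\log z)\alpha(\lambda)]^{-1}$ in powers of $(\log\lambda-a)^{-1}$ and $\lambda^2$ then yields \eqref{e:resexp} directly.
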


\noindent\textbf{Remarks.} 1. Our proof also shows that if $k \ne 0$, then $B_{2j,k}$ has  finite rank. Moreover, there is a unique harmonic function $G$ in  $\mathcal D_{\text{loc}}$ such that $\log |x| - G(x)$ is bounded as $|x| \to \infty$, and 
\begin{equation}\label{e:adef}
 B_{0,-1} = \frac 1 {2\pi} G \otimes G, \quad  a = \log 2 - \gamma - C(\mathscr O) + \frac {\pi i }2,  \quad C(\mathscr O) := \lim_{|x|\to\infty} \log|x| - G(x).
\end{equation}
The quantity $C(\mathscr O)$ is important in potential theory. It is the negative of Robin's constant and as the logarithm of the logarithmic capacity: see Appendix \ref{a:nonpolar}.

\noindent 2. We used the following definition, which will recur below: Given functions $f_n$ mapping $\Lambda$ to a Banach space $\mathcal B$, we say  $\sum_n f_n(\lambda)$ \textit{converges absolutely in $\mathcal B$, uniformly on sectors near zero} if, for any $\varphi>0$, there is $\lambda_1 >0$ such that  $\sum_n \|f_n(\lambda)\|_{\mathcal B}$ converges uniformly   on $\{\lambda \in \Lambda \colon 0<|\lambda| \le \lambda_1 \text{ and } |\arg \lambda| \le \varphi\}$. Moreover, the series \eqref{e:resexp}, as well as the series \eqref{e:smatexp} below, may be freely differentiated term by term, with each resulting series having a tail which  converges absolutely in~$\mathcal B$, uniformly on sectors near zero: see Appendix~\ref{a:series}.

\noindent 3. If we used $ \lambda^{2j} (\log \lambda)^k $ instead of $ \lambda^{2j} (\log \lambda-a)^k $ in our expansion, in place of \eqref{e:resexp} we would get
\begin{equation}\label{e:noasy}
\chi R(\lambda) \chi = \sum_{j=0}^\infty   \sum_{k=-\infty}^{j} \chi \widetilde B_{2j,k} \chi\lambda^{2j} (\log \lambda)^k.
\end{equation} 
Note that terms in \eqref{e:noasy} with $j \ge 1$ have infinitely many predecessors, and hence \eqref{e:noasy} is an asymptotic expansion only as far as the terms with $j=0$. The technique of using a shift of the logarithm to reduce the number of terms comes from \cite{jensen84}.

Our second theorem concerns the scattering matrix 
 $S(\lambda)$, which is a meromorphic family of operators $L^2(\mathbb S^1) \to L^2(\mathbb S^1)$. It can be defined for $\lambda \in \Lambda$ % which extends continuously to $\lambda \in \mathbb R$, where it is unitary. It is 
 by Petkov and Zworski's formula
\begin{equation}\label{e:pz}
 S(\lambda) = I + A(\lambda), \qquad \text{where} \qquad A(\lambda) = \frac 1 {4\pi i} E(\lambda) [\Delta,\chi_1] R(\lambda) [\Delta,\chi_2] E(\overline \lambda)^*.
\end{equation}
Here  $E(\lambda)$ is the operator from $L^2(\mathbb R^2)$ to $L^2(\mathbb S^1)$ which has integral kernel $e^{-i\lambda \omega \cdot x} \chi_3(x)$, and $\chi_1,\ \chi_2,\ \chi_3$ are  radial functions in $ C_0^\infty(\mathbb R^2)$ obeying
\begin{equation}\label{e:chiprops}
\chi_1 \equiv 1 \text{ near } \overline{D_{\rho}}, \qquad \chi_i \equiv 1 \text{ near supp } \chi_{i-1}, \qquad 0 \le \chi_i \le 1,
\end{equation}
where $\rho$ is large enough that $\mathscr O \subset D_{\rho}$, and $D_{\rho} = \{x \in \mathbb R^2 \colon |x|<\rho\}$.  The formula \eqref{e:pz} comes from Theorem 4.26 of \cite{dyatlovzworski}, and is a variant of the original Proposition 2.1 of \cite{pz}. 

\begin{thm}\label{t:smat} Suppose $\mathscr{O}$ is not polar.
Then there are finite rank operators $S_{j,k} \colon L^2(\mathbb S^1) \to L^2(\mathbb S^1)$, such that
\begin{equation}\label{e:smatexp}\begin{split}
S(\lambda) % &=  I +  \frac i 2 (1 \otimes 1)(\log \lambda - a)^{-1}+  \\
&=   I + \frac i 2 (1 \otimes 1)(\log \lambda - a)^{-1} + S_{1,-1}\lambda (\log \lambda - a)^{-1} + \sum_{j=2}^\infty  \sum_{k=-\lfloor j/2 \rfloor-1}^{\lfloor j/2 \rfloor}   S_{j,k} \lambda^{j} (\log \lambda -a)^{k},
\end{split}\end{equation}
with the series converging absolutely in the space of trace-class operators $L^2(\mathbb S^1) \to L^2(\mathbb S^1)$,  uniformly on sectors near zero. %\footnote{I've commented out for now the claim about $S_{1,-1}$, because I think what we needed it for will be subsumed by a more 
%general result, and I want to use $\mathcal{P}$ for something else.}
%Moreover, \begin{equation}\label{eq:S1neg1}
%S_{1,-1}= \mathcal{P}_0 S_{1,-1}\mathcal{P}_{1}+\mathcal{P}_{1} S_{1,-1}\mathcal{P}_0,
%\end{equation}
%where $\mathcal P_m \colon L^2(\mathbb S^1) \to L^2(\mathbb S^1)$ is the orthogonal projection onto the span of $\{e^{i m \theta}, e^{-im\theta}\}$. 
\end{thm}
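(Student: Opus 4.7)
The plan is to substitute the Petkov--Zworski formula \eqref{e:pz} into the resolvent expansion of Theorem \ref{t:res}, together with Taylor expansions of the entire operator-valued functions $\lambda\mapsto E(\lambda)$ and $\lambda\mapsto E(\overline\lambda)^*$, and then collect powers of $\lambda^J(\log\lambda-a)^K$. Since the kernels $e^{-i\lambda\omega\cdot x}\chi_3(x)$ and $e^{i\lambda\omega\cdot x}\chi_3(x)$ are entire in $\lambda$, I write
\begin{equation*}
E(\lambda) = \sum_{m\ge 0}\lambda^m E_m, \qquad E(\overline\lambda)^* = \sum_{n\ge 0}\lambda^n F_n,
\end{equation*}
where each $E_m$ and each $F_n$ is a finite-rank operator whose kernel is a polynomial in $\omega$ and $x$ times $\chi_3(x)$, and both series converge in trace norm uniformly on bounded subsets of $\lambda\in\Complex$. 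Choosing $\chi\in C_0^\infty(\mathbb R^2)$ equal to $1$ on $\operatorname{supp}\chi_2 \supset \operatorname{supp}\chi_1$, so that $[\Delta,\chi_i] = \chi[\Delta,\chi_i]\chi$, Theorem \ref{t:res} supplies an absolutely convergent expansion of $[\Delta,\chi_1]R(\lambda)[\Delta,\chi_2]$ as $\sum_{j\ge 0}\sum_{-j-1\le k\le j}\lambda^{2j}(\log\lambda-a)^k[\Delta,\chi_1]B_{2j,k}[\Delta,\chi_2]$ in bounded-operator norm, uniformly on sectors near zero.

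Multiplying the three series term by term yields contributions $\tfrac{1}{4\pi i}\lambda^{2j+m+n}(\log\lambda-a)^k E_m[\Delta,\chi_1]B_{2j,k}[\Delta,\chi_2]F_n$. Setting $J = 2j+m+n$ and $K = k$, for fixed $J$ and $K$ the admissible indices are $0\le j\le\lfloor J/2\rfloor$ with $-j-1\le K\le j$; taking the union over $j$ gives exactly $-\lfloor J/2\rfloor-1\le K\le\lfloor J/2\rfloor$, matching the index set in \eqref{e:smatexp}. Each coefficient $S_{J,K}$ is a finite sum of finite-rank operators, hence finite rank. Absolute convergence of the rearranged series in trace class follows from three observations: each term has rank at most $\min(m+1,n+1)$; the Taylor coefficients $E_m, F_n$ have operator norms decaying factorially; and the resolvent expansion converges absolutely in bounded-operator norm with bounded endpoint operators $[\Delta,\chi_i]$.

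To identify the explicit low-order coefficients, I compute the relevant contributions directly. The $\lambda^0(\log\lambda-a)^{-1}$ coefficient is $\tfrac{1}{4\pi i}E_0[\Delta,\chi_1]B_{0,-1}[\Delta,\chi_2]F_0$. Using $F_0 g = \chi_3\int_{\mathbb S^1}g$, the formula $B_{0,-1} = \tfrac{1}{2\pi}G\otimes G$, and the support chain of the $\chi_i$, this reduces to a scalar multiple of $1\otimes 1$. The scalar is computed via Green's identity on $\Omega\cap B_R$, using $\Delta G = 0$ on $\Omega$, $G|_{\partial\Omega} = 0$, and the asymptotic $G(x) = \log|x| - C(\mathscr O) + o(1)$ (which yields $\int_{\partial\mathscr O}\partial_\nu G = 2\pi$ for $\nu$ the outward unit normal to $\mathscr O$); after tracking the signs and gathering the factor $\tfrac{1}{4\pi i}$, one obtains exactly $\tfrac{i}{2}$. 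The coefficient $S_{1,-1}$ is computed analogously from the two contributions with $(m,n) = (1,0)$ and $(0,1)$. The vanishings $S_{0,0} = 0$ and $S_{1,0} = 0$ are verified by characterizing $B_{0,0}(\Delta\chi_j)$ as the unique element of $\mathcal D_{\text{loc}}$ compatible with the resolvent expansion, and checking that the relevant scalar integrals against $1$, $\chi_2$, and $G$ cancel thanks to $\int\Delta\chi_j = 0$, the divergence theorem, and the support relations $\chi_2\equiv 1$ near $\operatorname{supp}\chi_1$ and $\chi_3\equiv 1$ near $\operatorname{supp}\chi_2$.

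The main obstacle is the bookkeeping in the Green's identity computations that extract the constant $\tfrac{i}{2}$ and confirm the vanishing of $S_{0,0}$ and $S_{1,0}$: one must combine the normalizations of $B_{0,-1}$, of $E_0$ and $F_0$, of the Petkov--Zworski prefactor $\tfrac{1}{4\pi i}$, and of Green's identity on the exterior domain, while carefully tracking orientation of the normal and boundary contributions at infinity. Once these scalar identifications are in place, the rest of the proof reduces to formal manipulation of absolutely convergent series, with the admissible index range for $K$ read off directly from that of the resolvent expansion.
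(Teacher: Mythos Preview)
Your approach matches the paper's: expand $E(\lambda)$ and $E(\overline\lambda)^*$ as entire Taylor series in $\lambda$, insert these together with the resolvent expansion \eqref{e:resexp} into the Petkov--Zworski formula \eqref{e:pz}, read off the index range $-\lfloor J/2\rfloor-1\le K\le\lfloor J/2\rfloor$, and identify the low-order coefficients via the identities \eqref{e:b00b01dc2} (your ``characterization of $B_{0,0}(\Delta\chi_j)$'' is exactly $B_{0,0}\Delta\chi_2=1-\chi_2$).

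One technical caveat: to extract the scalar in the $(\log\lambda-a)^{-1}$ term you invoke Green's identity on $\Omega\cap B_R$ with the boundary contribution $\int_{\partial\mathscr O}\partial_\nu G$, but the theorem assumes only that $\mathscr O$ is non-polar, with no regularity whatsoever on $\partial\mathscr O$, so neither the trace $G|_{\partial\Omega}$ nor the normal derivative $\partial_\nu G$ on $\partial\mathscr O$ need make sense. The paper sidesteps this by writing $-\int[\Delta,\chi_1]G=\int_{B_R}\Delta\big((1-\chi_1)G\big)$, where $(1-\chi_1)G$ extends by zero across $\mathscr O$ since $1-\chi_1$ vanishes near $\mathscr O$; the divergence theorem on the full disk $B_R$ then produces only the boundary term at $|x|=R$, giving $\int_{r=R}\partial_r G\to 2\pi$. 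This keeps the entire computation away from $\partial\mathscr O$ and is what you need in the stated generality.
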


Our third theorem concerns the scattering phase $\sigma(\lambda)$ for $\lambda$ near zero,  defined by
\begin{equation}\label{e:scphasedef}
 \sigma(\lambda) = \frac 1 {2\pi i} \log \det S(\lambda)= \frac 1 {2\pi i} \tr \log S(\lambda).
\end{equation}

\begin{thm}\label{t:sphase}Suppose $\mathscr{O}$ is not polar. Then there are complex numbers $a_{2j,k}$ and $b_{2j,k}$ such that
\begin{equation}\label{e:sigmasum}
   \sigma(\lambda) = \frac 1 {2\pi i} \log\Big(1 + \frac {i\pi} {\log \lambda -a}\Big) +   \sum_{j=1}^\infty  \sum_{k=- \infty}^{j}    a_{2j,k} \lambda^{2j}  (\log \lambda -a)^{k},
\end{equation}
and
\begin{equation}\label{e:sigma'sum}
   \sigma'(\lambda) = \frac {-1} {2\lambda(\log \lambda -a)(\log \lambda - \bar a)} +   \sum_{j=1}^\infty  \sum_{k=- \infty}^{j}    b_{2j-1,k}  \lambda^{2j-1} (\log \lambda -a)^{k},
\end{equation}
with the series converging absolutely in $\mathbb C$, uniformly on sectors near zero. In particular, as $\lambda \to 0$ through positive real values, we have
\begin{equation}\label{e:sigma1st}
 \sigma(\lambda) = \frac 1 {\pi} \arctan\Big(\frac{\pi} {2\log (\lambda/2) + 2C(\mathscr O) + 2\gamma }\Big) + O(\lambda^2 \log \lambda),
\end{equation}
and
\begin{equation}\label{e:sigma'1st}
 \sigma'(\lambda) = \frac {-2/\lambda}{4(\log(\lambda/2) + C(\mathscr O) + \gamma)^2 +\pi^2} + O(\lambda \log \lambda).
\end{equation}
\end{thm}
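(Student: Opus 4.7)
The plan is to compute $\sigma(\lambda) = (2\pi i)^{-1} \tr \log(I + A(\lambda))$ with $A := S - I$, splitting off the rank-one singular piece $T(\lambda) := (i/2)(1 \otimes 1)(\log \lambda - a)^{-1}$ identified in Theorem~\ref{t:smat}. Writing $A = T + B$ with $B = O(\lambda)$ in trace norm, I factor $I + A = (I + T)(I + K)$ with $K := (I+T)^{-1}B$, so that
\begin{equation*}
\sigma(\lambda) = \frac{1}{2\pi i}\log\det(I + T) + \frac{1}{2\pi i}\tr\log(I + K).
\end{equation*}

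For the first summand, the rank-one determinant formula gives $\det(I + T) = 1 + (i/2)\|1\|^2_{L^2(\mathbb S^1)}/(\log\lambda - a) = 1 + i\pi/(\log\lambda - a)$, matching the leading logarithmic term of~\eqref{e:sigmasum}. Using~\eqref{e:adef} one checks $\bar a = a - i\pi$, so this rewrites as $(\log\lambda - \bar a)/(\log\lambda - a)$, which is the form convenient for the real-axis formula. Correspondingly the rank-one inversion formula yields $(I + T)^{-1} = I - \frac{i/2}{\log\lambda - \bar a}(1 \otimes 1)$, and expanding $(\log\lambda - \bar a)^{-1}$ as a geometric series in $(\log\lambda - a)^{-1}$ produces arbitrarily large positive powers of $(\log\lambda - a)^{-1}$, which accounts for the $k \to -\infty$ tail in the inner sum of~\eqref{e:sigmasum}.

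For the second summand, $\|K(\lambda)\|_{\mathrm{tr}} = O(\lambda(\log\lambda - a)^{-1})$, so $\log(I + K) = \sum_{n \ge 1}(-1)^{n-1}K^n/n$ converges in trace norm uniformly on sectors near zero. Substituting the expansions of $B$ and $(I + T)^{-1}$ and taking the trace gives $\tr\log(I + K)$ as a double series $\sum_{j \ge 1}\sum_{k} c_{j,k}\lambda^j(\log\lambda - a)^k$. That only \emph{even} values of $j$ appear---which is the main non-routine step---follows from showing that $\tr A(\lambda)^n$ is even in $\lambda$ for every $n$. Indeed, unwinding~\eqref{e:pz}, the iterated trace contains, for each $\omega_j \in \mathbb S^1$, an angular integral $\int_{\mathbb S^1} e^{-i\lambda\omega_j \cdot z_j}\,d\omega_j = 2\pi J_0(\lambda|z_j|)$, which is even in $\lambda$; combined with the fact that the resolvent kernel $R(\lambda)(x,y)$ is even in $\lambda$ by Theorem~\ref{t:res}, this makes each $\tr A^n$, and hence $\tr\log(I + A)$, even in $\lambda$.

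The derivative series~\eqref{e:sigma'sum} follows by the termwise differentiation allowed in Appendix~\ref{a:series}, with leading term $-1/(2\lambda(\log\lambda - a)(\log\lambda - \bar a))$ obtained by differentiating the first explicit summand above. For $\lambda > 0$, set $r := \log(\lambda/2) + \gamma + C(\mathscr O)$, so that $\log \lambda - a = r - i\pi/2$ and $\log \lambda - \bar a = r + i\pi/2$; then $(\log\lambda - \bar a)/(\log\lambda - a)$ has modulus one and argument $2\arctan(\pi/(2r))$, so the leading term of $\sigma$ equals $(1/\pi)\arctan(\pi/(2r))$, yielding~\eqref{e:sigma1st}; the $O(\lambda^2\log\lambda)$ error comes from the $j=1$ block of the remainder series, whose largest term is $\lambda^2(\log\lambda - a)$, and~\eqref{e:sigma'1st} follows analogously. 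The main obstacle will be the parity verification: carefully identifying the Bessel functions arising from the $\omega$-integrals in $\tr A^n$ uniformly in $n$, and combining them with the even-in-$\lambda$ structure of $R(\lambda)(x,y)$ to conclude the evenness of $\tr \log(I + A)$ as a formal (and convergent) series in $\lambda^j(\log\lambda - a)^k$.
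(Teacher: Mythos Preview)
Your proof is correct and reaches the same conclusions, but it differs from the paper's argument in two notable places.

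\textbf{Extraction of the leading term.} The paper works directly with the Mercator series
\[
\sigma(\lambda) = \frac{-1}{2\pi i}\sum_{\ell \ge 1}\frac{(-1)^\ell}{\ell}\tr A(\lambda)^\ell,
\]
computes $\tr A(\lambda)^\ell = \big(\tfrac{i\pi}{\log\lambda - a}\big)^\ell + (\text{terms with }\lambda^{2j},\ j\ge 1)$, and then re-sums the leading pieces to recover $\log\big(1 + \tfrac{i\pi}{\log\lambda - a}\big)$. You instead factor $I+A = (I+T)(I+K)$ and read off $\det(I+T)$ from the rank-one formula. The two are equivalent; your route spares the bookkeeping of tracking the leading term through each $\tr A^\ell$, at the cost of introducing the auxiliary operator $K=(I+T)^{-1}B$ whose expansion in $(\log\lambda - a)^{-1}$ has an infinite tail.

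\textbf{Parity.} The paper's evenness argument is Fourier-analytic on $\mathbb S^1$: it introduces the projections $\mathcal P_0$, $\mathcal P_{1/2}$ onto even and odd exponentials, observes that $E_\ell$ lands in a definite parity block, deduces the block structure of each $S_{j,k}$, and then uses a cyclicity-of-trace lemma to kill odd-$j$ contributions to $\tr A^\ell$. Your argument is configuration-space: by cyclicity, $\tr A^n = \tr(NM)^n$ with $N = E(\bar\lambda)^* E(\lambda)$ and $M = (4\pi i)^{-1}[\Delta,\chi_1]R(\lambda)[\Delta,\chi_2]$; the kernel of $N$ is $2\pi\chi_3(x)\chi_3(y)J_0(\lambda|x-y|)$, which is an entire function of $\lambda^2$, and $M$ involves only $\lambda^{2j}(\log\lambda - a)^k$ by Theorem~\ref{t:res}. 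This is a clean and arguably more transparent route to parity than the projection lemma; it avoids introducing $\mathcal P_0$, $\mathcal P_{1/2}$ entirely. One small point to make precise when you write it out: the $\omega$-integral you cite arises specifically from the composition $E(\bar\lambda)^*E(\lambda)$ (so the exponent is $e^{i\lambda\omega\cdot(x-y)}$, not $e^{-i\lambda\omega\cdot z}$), and you should state the cyclicity step $\tr A^n = \tr(NM)^n$ explicitly rather than leaving it implicit.

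The remainder---termwise differentiation via Lemma~\ref{l:appser}, the identification $\bar a = a - i\pi$, and the real-axis rewriting in terms of $r = \log(\lambda/2)+\gamma+C(\mathscr O)$---matches the paper and is handled correctly.
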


\noindent \textbf{Remark.} In each of the theorems above, the assumption that $\mathscr{O}$ is not polar is necessary as well as sufficient. Indeed, if $\mathscr O$ is polar, then the expansion near $\lambda =0$ of $R(\lambda)$ is not of the form \eqref{e:resexp} but instead equals that of the free resolvent \eqref{e:r0series}. Moreover, $S(\lambda) = I$ and $\sigma(\lambda) =0$ for all $\lambda$. To see this, note that if $\mathscr O$ is polar, then $H^1_0(\Omega)$, the  form domain of the Dirichlet Laplacian on $\Omega$, is dense in $H^1(\mathbb R^2)$, the form domain of the free Laplacian on $\mathbb R^2$. Consequently the continuous extension of $R(\lambda)$ from $L^2(\Omega)$ to $L^2(\mathbb R^2)$ equals the free resolvent $R_0(\lambda)$ of Section \ref{s:resfree}.

\subsection{Background and context}

Early low frequency resolvent expansions were obtained by MacCamy \cite{maccamy}. Vainberg \cite{vai75, vai89} has very general results and many references. We focus on dimension two because of its physical importance and because the problem is harder here than in other dimensions; see for example Lemma 2.3 in \cite{lp} by Lax and Phillips for an expansion of the scattering matrix in dimension three.

Our Theorem \ref{t:res} is a variant of Theorem 2 of \cite{ww} by Weck and Witsch, of Theorem 1 of \cite{kv} by Kleinman and Vainberg, and of Theorem 1.7 of \cite{sw} by Strohmaier and Waters.

More specifically, the results of \cite{kv} and \cite{sw} cover problems which are more general than ours in many respects, but specialized to our setting they require $\partial \Omega$ to be $C^\infty$, while our assumption that $\mathscr O$ is polar is optimal. The results of \cite{ww} expand only up to $O(\lambda^2)$. Our methods are different from those in the papers mentioned above. Specifically,  \cite{kv} and \cite{sw} rely on general theory developed in \cite{vai89} and \cite{must} respectively, while our approach based on algebra of series of operators as in Vodev \cite{vo99, vo14} leads  to a direct short proof of complete resolvent asymptotics: see Section \ref{s:res}.

Our scattering phase asymptotic \eqref{e:sigma1st} improves previous results in \cite{hassellzelditch} and \cite{mcg}, and is implicit in the proof of Theorem 3.25 of \cite{sw}. To compare the results, recall that in \cite{mcg}, McGillivray computes asymptotics of the Krein spectral shift function $\xi$ defined by
\[
 \tr\Big(J g(H) J^* -  g(-\Delta)\Big) = \int_0^\infty g'(\mu)\xi(\mu)d\mu,
\]
where $J$ is the operator taking functions on $\mathbb R^2 \setminus \overline{\mathscr{O}}$ to functions on $\mathbb R^2$ by extending them by zero. By the Birman--Krein formula (see \cite{jensenkato} or Proposition 0.1 and Theorem 1.1 of \cite{chr98}), we see that $\xi(\mu) = - \sigma(\sqrt \mu)$. Putting $b =  \log 4 - 2C(\mathscr O) - 2\gamma$ and applying \eqref{e:sigma1st} gives % then tells us that if  $\Omega$ is
%connected 
%\[
% \tr\Big(J g(H) J^* -  g(-\Delta)\Big) 
% = \int_0^\infty g(\lambda^2) \sigma'(\lambda)d\lambda.
%\]
%Since, for $g \in C_0^\infty((0,\infty))$ we have
%\[
%\int_0^\infty g(\lambda^2) \sigma'(\lambda)d\lambda = - \int_0^\infty g'(\lambda^2)\sigma(\lambda)2\lambda d\lambda = - \int_0^\infty g'(\mu) \sigma(\sqrt \mu)d\mu,
% \int_0^\infty g'(\lambda) \xi(\lambda)d\lambda = - \int_0^\infty g(\lambda) \xi'(\lambda)d\lambda = \int_0^\infty g(\lambda^2) \xi'(\lambda^2)2\lambda d \lambda,
%\]
%it follows that
\begin{align}
 \xi(\mu) &  = \frac 1 {\pi} \arctan\Big(\frac{\pi} {b - \log \mu}\Big) + O(\mu \log \mu) \label{e:xi}\\
&=  \frac {1}{-\log \mu} + \frac{- b} {(-\log \mu)^2} + \frac {b^2 - \frac {\pi^2}3}{(-\log \mu)^3} + O((\log \mu)^{-4}). \label{e:mcg}
\end{align}
%where $b =  \log 4 - 2C(\mathscr O) - 2\gamma$.
%Using $\arctan x = x - \frac {x^3}3 + O(x^5)$, we have
%\[
% \xi(\mu) = \frac {-1}{\log (\mu) - b } + \frac {\pi^2}{3(\log (\mu) - b )^3} + O((\log \mu)^{-5}).
%\]
%Since
%\[
% \frac {-1}{\log (\mu) - b }  = \frac {-1}{\log \mu}\Big(\frac 1{1 - \frac b {\log \mu}}\Big) = -\frac {1}{\log \mu} - \frac b {(\log \mu)^2} - \frac {b^2}{(\log \mu)^3} + O((\log \mu)^{-4}),
%\]
%we get
%\begin{equation}\label{e:mcg}
% \xi(\mu) =  \frac {1}{-\log \mu} + \frac{- b} {(-\log \mu)^2} + \frac {b^2 - \frac {\pi^2}3}{(-\log \mu)^3} + O((\log \mu)^{-4}).
%\end{equation}

Thus, \eqref{e:xi} recovers and improves \eqref{e:mcg}.  The first term of \eqref{e:mcg} was first computed in \cite{hassellzelditch}, and all three terms of \eqref{e:mcg} were computed in \cite{mcg} (note that $C(K)$ in \cite{mcg} corresponds to $2C(\mathscr O)$ here). See Figure \ref{f:comp} for a comparison of the approximations when the obstacle $\mathscr O$ is a disk of radius $\rho$, in which case the spectral shift function is given by
\begin{equation}\label{e:xihank}
 \xi_\rho(\mu) = \frac {1}{2\pi} \sum_{\ell = -\infty}^{\infty} \arg\big(H^{(2)}_\ell(\rho\sqrt{\mu})/H^{(1)}_\ell(\rho\sqrt{\mu})\big);
\end{equation}
see  \eqref{e:slh1h2}.
Note that the scaling $\xi_\rho(\mu) = \xi_{1}(\rho^2\mu)$ exhibited by \eqref{e:xihank} is also respected by our approximation in \eqref{e:xi}, but not by the individual terms of \eqref{e:mcg}.% Because of this scaling we introduce the dimensionless variable $w = \rho \lambda = \rho \sqrt \mu$ and write
%\[
% \xi_{\rho}(w) = \frac {1}{2\pi} \sum_{\ell = -10}^{10} \arg\big(H^{(2)}_\ell(w)/H^{(1)}_\ell(w)\big),
%\]
%\[
% \xi_{\rho,CD}(w) = \frac 1 {\pi} \arctan\Big(\frac{\pi} {\log 4 - 2 \gamma - \log w}\Big) ,
%\]
%\[
% \xi_{\rho,HZ}(w) = \frac {1}{-\log(w^2/\rho^2)}
%\]
%\[
%  \xi_{\rho,McG}(w) = \frac {1}{-\log(w^2/\rho^2)}+ \frac{- b} {(-\log (w^2/\rho^2))^2} + \frac {b^2 - \frac {\pi^2}3}{(-\log (w^2/\rho^2))^3}.
%\]

\begin{figure}[ht]
\includegraphics[width=7cm]{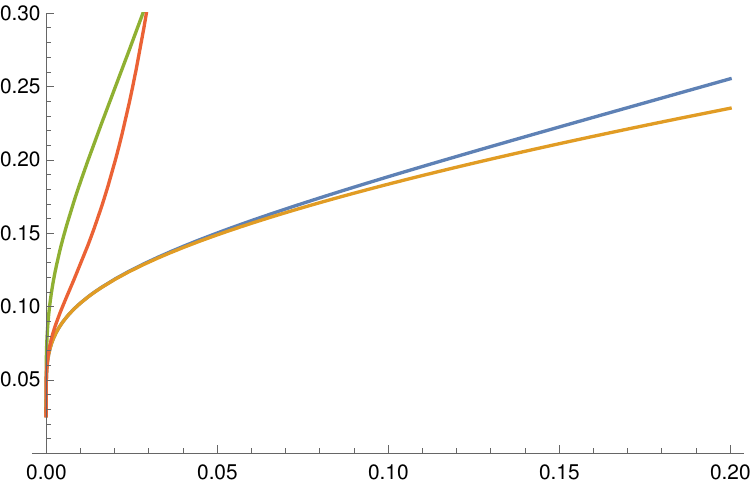} \hspace{1cm}
\includegraphics[width=7cm]{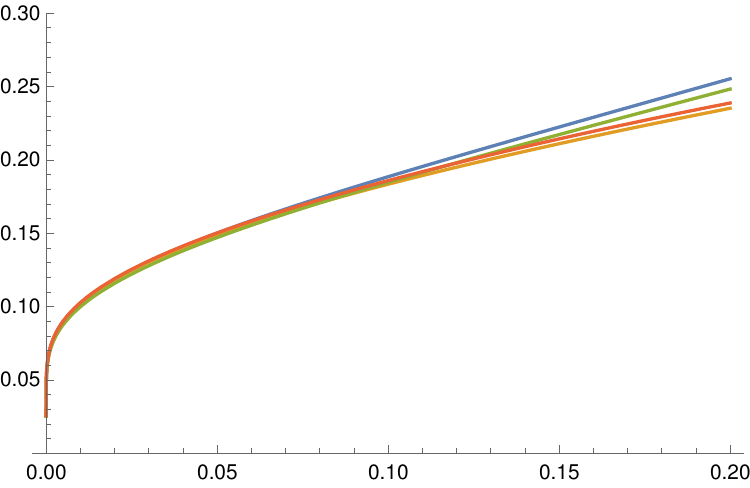}

\vspace{1cm}

\includegraphics[width=7cm]{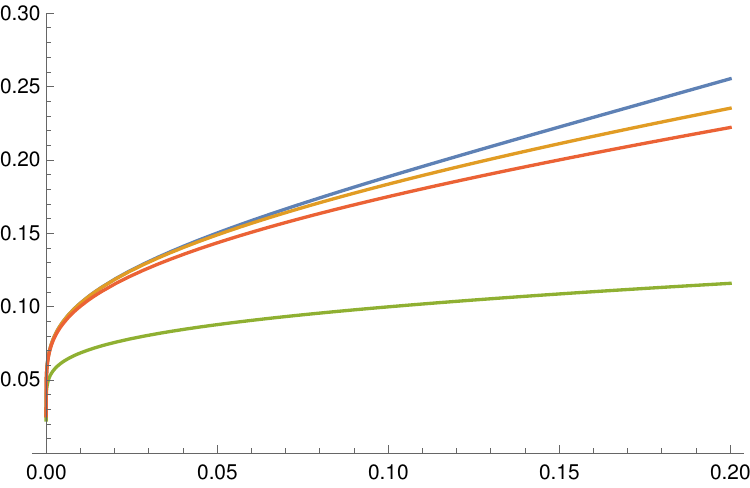} \hspace{1cm}
\includegraphics[width=7cm]{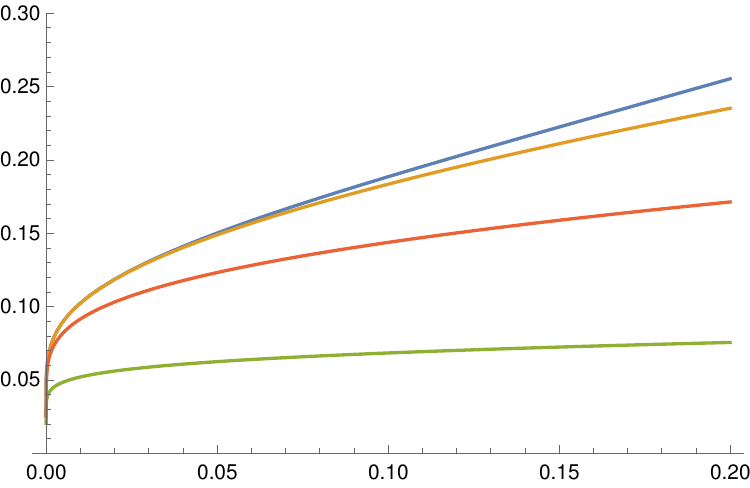}
 \caption{Graphs of the spectral shift function $\xi$ and its first three approximations, when the obstacle $\mathscr O$ is a disk of radius $\rho = $ 0.15 (top left), 1.5 (top right), 15 (bottom left), and 150 (bottom right). The horizontal axis is the dimensionless (scaling-invariant) variable $\rho \lambda = \rho \sqrt{\mu}$. The blue is the true value \eqref{e:xihank}. % and is calculated using \eqref{e:xihank} but with the sum taken from $\ell=-10$ to $\ell =10$. 
The  green is the first term from \eqref{e:mcg}, first obtained in \cite{hassellzelditch}. The red is the first three terms from \eqref{e:mcg}, first obtained in \cite{mcg}. The yellow is the leading approximation obtained in this paper, given by the first term of \eqref{e:xi}, which is implicit in the proof of Theorem 3.25 of \cite{sw}. Observe that  the yellow and blue curves are independent of the radius $\rho$, while the red and green depend on $\rho$. %The red and green seem to be best when $\rho \approx 1.5$, and they get worse as $\rho \to 0$ and as $\rho \to \infty$, with the former deterioration being more rapid than the latter.
}\label{f:comp}
\end{figure}

More recently, in \cite{gmwz}, more accurate analytic and numerical investigations of the scattering phase have been conducted for both a disk and for other obstacles, including ones with different kinds of stable and unstable trapping.

\subsection{Discussion of methods and outline of proof}

The method of proof is as follows. %We think of the series in Theorems \ref{t:res}, \ref{t:smat}, and \ref{t:sphase} as a more complicated version of a Taylor or Laurent series.
In Section \ref{s:res}, we deduce the series for the resolvent $R(\lambda)$ of Theorem \ref{t:res} from the series for the free resolvent $R_0(\lambda)$, using a resolvent identity due to Vodev \cite{vo14}, and techniques in part based on Vodev's work in \cite{vo99}. % and Jensen and Nenciu's work in \cite{jn}. 
%This is our main result and we obtain it in Section \ref{s:res}.

Our techniques can be applied to quite general compactly supported
perturbations of $-\Delta $ on $\Real^2$, far beyond the specific, classical setting considered here, that of the Dirichlet problem on an exterior domain.  In a companion paper we will address the more general case.  As demonstrated
 already in \cite{bgd,jn} for the 
case of Schr\"odinger operators on $\Real^2$,  different kinds of singular behavior of the resolvent near zero can then appear.  

We deduce the series for the scattering matrix and scattering phase of Theorems \ref{t:smat} and \ref{t:sphase} in Section \ref{s:smsp}, by inserting the resolvent expansion \eqref{e:resexp} into the formulas \eqref{e:pz} and \eqref{e:scphasedef}.
Section \ref{s:dtn} contains some related results for the Dirichlet-to-Neumann operator. %, in particular answering a question posed by Grebenkov.

Appendix \ref{a:nonpolar} collects background on polar and nonpolar sets, Appendix \ref{a:series} contains a lemma about series with logarithmic terms, and Appendix \ref{a:disk} contains formulas for the scattering matrix and scattering phase of a disk; see also Section 5.2 of \cite{hassellzelditch} and Section 3 of \cite{gmwz}. %and Section 6 of \cite{dghh}.

The presentation of Theorems \ref{t:res}, \ref{t:smat}, and \ref{t:sphase} is self-contained, based on standard complex and functional analysis, except for a few ingredients. For the resolvent expansions we quote the series \eqref{e:r0bess} and \eqref{e:h10ser} for the free resolvent. For the scattering matrix and scattering phase  we  quote Petkov and Zworski's formula \eqref{e:pz}. Our analysis of the Dirichlet-to-Neumann operator in Section~\ref{s:dtn} requires more, namely mapping properties from \cite{gmz}, results on solutions of
boundary value problems from \cite{mclean}, and  results on analyticity of eigenvalues for which \cite{rs} is a reference.

%Then absolute convergence, together with derivatives, uniformly on sectors near zero, follows from simple convergence in this case similarly as for Taylor or Laurent series: this is proved in Appendix \ref{a:series}, and is also closely related to the more general theory of series of Hahn-holomorphic functions developed in \cite{must}.

%At some steps in our proof we could have quoted some general results from the general theory of Hahn holomorphic functions from \cite{must}: for instance in \eqref{e:sigmasuma} we could use Theorem 2.12 of \cite{must} to conclude that $\log(1+A(\lambda))$ is Hahn--holomorphic because $A(\lambda)$ is. 

\subsection{Notation and conventions}\label{s:not}
\begin{itemize}[leftmargin=0.5cm]
\setlength\itemsep{.15cm}
\item $C_0^\infty(U)$ is the set of functions in $C^\infty(U)$ with compact support in $U$.
\item $\mathscr{O} \subset \mathbb R^2$ is compact and $\Omega = \mathbb R^2 \setminus \mathscr O$. We say $\mathscr O$ is \textit{polar} if $C_0^\infty(\Omega)$ is dense in $H^1(\mathbb R^2)$. The Dirichlet Laplacian  is $-\Delta \colon \mathcal D \to  L^2(\Omega)$, with $\mathcal D = \{u \in H^1_0(\Omega) \colon \Delta u \in L^2(\Omega)\}$, and $\Delta := \partial_{x_1}^2 + \partial_{x_2}^2$.
\item The Dirichlet resolvent $R(\lambda)$ is defined for $\im \lambda>0$ to be the operator which takes $f \in L^2(\Omega)$ to the unique $u \in \mathcal D$ solving $(-\Delta - \lambda^2)u=f$.
%an operator $(-\Delta - \lambda^2)^{-1}:\colon L^2(\Omega) \to \mathcal D$ when $\im \lambda>0$. 
For every $\chi \in C_0^\infty(\mathbb R^2)$, the product $\chi R(\lambda) \chi$ continues meromorphically from the upper half plane to $\Lambda$, the Riemann surface of the logarithm. The free resolvent $R_0(\lambda)$ is defined in the same way but with $L^2(\Omega)$ replaced by $L^2(\mathbb R^2)$ and $\mathcal D$ replaced by $H^2(\mathbb R^2)$. See Sections \ref{s:resfree} and \ref{s:vod} for a review of these facts.
\item The mapping $\Lambda \to \mathbb C$ given by $\lambda \mapsto \log \lambda$ is bijective, with the upper half plane $\im \lambda > 0$ identified with the subset of $\Lambda$ where $\arg \lambda = \im \log \lambda$ takes values in $(0,\pi)$.
\item The mapping $\lambda \mapsto \bar \lambda$ on $\Lambda$ is defined by $|\bar \lambda| = |\lambda|$ and $\arg \bar \lambda = - \arg \lambda$.
\item The product of a function $\chi$ on $\mathbb R^2$ and a function $f$ on $\Omega$ is the function $\chi f$ on $\Omega$ obtained by ignoring the values of $\chi$ on $\mathbb R^2 \setminus \Omega$.
\item $L^2_{c}(\Omega)$ is the set of functions $f \in L^2(\Omega)$ such that $\chi f =f$ for some $\chi \in C_0^\infty(\mathbb R^2)$. Other function spaces with a `c' subscript are defined analogously.
\item  $\mathcal D_{\text{loc}}$ is the set of functions $f$ on $\Omega$ such that $\chi f \in \mathcal D$ for all $\chi \in C_0^\infty(\mathbb R^2)$. Other function spaces with a `loc' subscript are defined analogously.
\item $G$ is the unique harmonic function in $\mathcal D_{\text{loc}}$ such that $\log |x| - G(x)$ is bounded as $|x| \to \infty$. We construct $G$ and prove its uniqueness in Lemma \ref{l:g}.
\item $C(\mathscr O) = \lim_{|x| \to \infty} \log |x| - G(x)$. 
\item $\gamma$ is Euler's constant, given by   $\gamma = - \Gamma'(1) = 0.577\dots $.
\item $a= \log 2 - \gamma - C(\mathscr O) + \frac{\pi i}2$.
\item $g \otimes h $ means the operator of rank one which maps $f$ to $(\int \overline{h} f) g$.
\item Petkov and Zworski's formula for the scattering matrix $S(\lambda) = I + A(\lambda)$ is given in \eqref{e:pz}. This corresponds to the definition $S(\lambda) = S_a(\lambda)S_{a,0}(\lambda)^{-1}$, where $S_a$ is the absolute scattering matrix and $S_{a,0}$ is the free absolute scattering matrix, as in Section 4.4.3 of \cite{dyatlovzworski}. Note that the convention in \cite{pz}, as in \cite{mel95}, has instead $S(\lambda) = S_{a,0}(\lambda)^{-1}S_a(\lambda)$.
\item The cutoff functions $\chi_1$, $\chi_2$, and $\chi_3$ are introduced in \eqref{e:chiprops}.
\item The scattering phase is $\sigma(\lambda) = \frac 1 {2\pi i} \log \det S(\lambda) = \frac 1 {2\pi i} \tr \log S(\lambda)$. This definition gives the same value for either convention of $S(\lambda)$.
\item $\mathbb N = \{1,\,2\,\dots\}$ and $\mathbb N_0 = \{0\} \cup \mathbb N$.
\end{itemize}

\section{The resolvent}\label{s:res}

\subsection{The free resolvent}\label{s:resfree}

%Let $-\Delta$ be the nonnegative Laplacian on $H^2(\mathbb R^2) \to L^2(\mathbb R^2)$, and $R_0(\lambda) := (-\Delta - \lambda^2)^{-1}$ its resolvent in the upper half plane. 

Let $R_0(\lambda)$ be the free resolvent on $\mathbb R^2$, defined for $\im \lambda>0$ to be the operator which takes $f \in L^2(\mathbb R^2)$ to the unique $u \in H^2(\mathbb R^2)$ solving $(-\Delta - \lambda^2)u=f$. The integral kernel of $R_0(\lambda)$ is given in terms of Bessel functions of order $0$ by 
\begin{equation}\label{e:r0bess}
R_0(\lambda)(x,y)= \frac 1 {2\pi} K_0(-i\lambda|x-y|)=\frac{i}{4} H^{(1)}_0(\lambda|x-y|),
\end{equation}
where
$$H^{(1)}_0(\lambda) = \left( 1+\frac{2i}{\pi}\log \left(\frac{\lambda}{2}\right)  \right) J_0(\lambda) - \frac{2i}{\pi} \sum_{m=0}^\infty \psi(m+1) \frac{(-\lambda^2/4)^m}{(m!)^2},\qquad J_0(\lambda) =  \sum_{m=0}^\infty\frac{(-\lambda^2/4)^m}{(m!)^2},$$
and $\psi $ is the digamma function, i.e.  $\psi(1)=\Gamma'(1)=-\gamma$, $\psi(m+1) = \psi(m) + \frac 1m$; see e.g. \cite[Section 4.1.3]{borthwick} and \cite[Sections 2.9.3, 7.5, and 7.8.1]{olver}. Thus,
\begin{equation}\label{e:h10ser}
 H^{(1)}_0(\lambda) = \sum_{m=0}^\infty  \left(\frac{2i}{\pi}\log \left(\frac{\lambda}{2}\right) - \frac {2i}\pi \psi(m+1)+1\right)  \frac{(-\lambda^2/4)^m}{(m!)^2}.
\end{equation}

By large argument Bessel function asymptotics, (see e.g. \cite[Section 7.4.1]{olver}), for any $\lambda$ in the upper half plane we have $|R_0(\lambda)(x,y)| = O(e^{-|x-y|\im \lambda})$. Hence, for any $f \in L^2_{c}(\mathbb R^2)$ and $\lambda$ in the upper half plane,
\begin{equation}\label{e:r0exp}
 R_0(\lambda) f(x) = O(e^{-|x|\im \lambda}), \qquad \text{as } |x| \to \infty.
\end{equation}

It follows from \eqref{e:r0bess} and \eqref{e:h10ser} that  $ R_0(\lambda)(x,y)$, and hence also $R_0(\lambda)\colon L^2_{c}(\mathbb R^2)\to H^2_{\text{loc}}(\mathbb R^2)$, continue holomorphically from the upper half plane to $\Lambda$, the Riemann surface of $\log \lambda$. For each $\lambda \in \Lambda$  we write
\begin{equation}\label{e:r0series}
 R_0(\lambda) = \sum_{j=0}^\infty \sum_{k=0}^1  R_{2j,k}  \lambda^{2j}(\log \lambda)^k =  R_{01}\log \lambda + \tilde R_0(\lambda),
\end{equation}
where $\tilde R_0(\lambda)$ is defined by the equation, the $R_{2j,k}$ are operators $L^2_{c}(\mathbb R^2)\to H^2_{\text{loc}}(\mathbb R^2)$ such that $\chi R_{2j,k} \chi$ is bounded $L^2(\mathbb R^2)\to H^2(\mathbb R^2)$ for any $\chi \in C_0^\infty(\mathbb R^2)$, and the series converges in the sense that, for every $\lambda_0>0$ and $\chi \in C_0^\infty(\mathbb R^2)$, the series
\[
  \sum_{j=1}^\infty \sum_{k=0}^1 \| \chi R_{2j,k} \chi\|_{L^2(\mathbb R^2) \to H^2(\mathbb R^2)} |\lambda|^{2j}|\log \lambda|^k
\]
converges uniformly for all $\lambda \in \Lambda$ with $|\lambda| \le \lambda_0$.

 The leading coefficient is
\[
 R_{01} = - \frac{1}{2\pi} (1 \otimes 1), 
\]
where $1 \otimes 1$ means the operator mapping $\varphi$ to the constant $\int_{\mathbb R^2} \varphi$. The next term has integral kernel
\[
 R_{00}(x,y) =  - \frac 1 {2\pi} \log|x-y| + \frac{\log 2 - \gamma}{2\pi} + \frac i 4.
\]
Since $\log|x-y| = \frac 12 \log |x-y|^2 = \log |x| + \frac 12 \log(1 - \frac {2x\cdot y}{|x|^2} + \frac{|y|^2}{|x|^2})$, we obtain
\begin{equation}\label{e:r00xinf}
 R_{00} f\, (x) = \Big(- \frac 1 {2\pi} \log|x| +  \frac{\log 2 - \gamma}{2\pi} + \frac i 4\Big) \int_{\mathbb R^2} f +  \frac 1 {4\pi}\sum_{m=1}^\infty \frac 1 {m|x|^{2m}} \int  (2x\cdot y - |y|^2)^m f(y)\,dy,
\end{equation}
for $|x|$ large enough, when $f \in L^2_{c}(\mathbb R^2)$.
% Next,
%\[
 %R_{2,1}(x,y) = \frac{|x-y|^2}{8\pi}, \qquad R_{2,0}(x,y) = \frac 1 {8\pi} |x-y|^2\Big(\log\frac{|x-y|}2 -1 + \gamma - \frac {\pi i} 2 \Big)
%\]

\subsection{The Dirichlet resolvent} 
Recall that the Dirichlet Laplacian $-\Delta$ is the unique operator $\mathcal D:= \{u \in H^1_0(\Omega) \colon \Delta u \in L^2(\Omega)\} \to L^2(\Omega)$ such that
\begin{equation}\label{e:frieddef}
 \int_{\Omega} \nabla u \cdot \nabla v  = \int_\Omega (-\Delta u) v, \qquad \text{for all } u \in \mathcal D \text{ and } v \in H^1_0(\Omega);
 \end{equation}
see the first two pages of Section 2 of Chapter 8 of \cite{taylor}, or Section 6.1.2 of \cite{borthwick}.

From the fact that $-\Delta \colon \mathcal D \to L^2(\Omega)$ is nonnegative, we know that $ \|R(i\kappa)\|_{L^2(\Omega) \to L^2(\Omega)} = \kappa^{-2}$ for all $\kappa >0$. We begin our analysis of the resolvent near $\kappa = 0$ by showing that
\begin{equation}\label{e:resapriori}
\sup_{\kappa \in (0,1]}\|\kappa^{3/2} R(i\kappa)\chi \|_{L^2(\Omega) \to H^1_0(\Omega)}  < \infty, \qquad \text{for all }\chi \in C_0^\infty(\mathbb R^2).
\end{equation}
The bound \eqref{e:resapriori} follows from $\|\chi f\|_{L^{4/3}} \le  \|\chi\|_{L^4}\|f\|_{L^2}$ (by H\"older's inequality, Theorem 189 of \cite{hlp}) and the following lemma based on a  Sobolev interpolation  inequality of Ladyzhenskaya.

\begin{lem}\label{l:resapriori}
Let $\kappa >0$,  $g \in L^2(\Omega) \cap L^{4/3}(\Omega)$, and $u = R(i\kappa)g$. Then
\begin{equation}\label{e:kinv}
\|\nabla u\|_{L^2(\Omega)}^2  +  \kappa^2  \| u\|_{L^2(\Omega)}^2 \le \frac { 1} {\kappa\sqrt 8} \|g\|_{L^{4/3}(\Omega)}^2.
\end{equation}
\end{lem}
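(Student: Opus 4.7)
The plan is to test the resolvent equation against $u$, obtain an energy identity, and then close the estimate using Hölder's inequality paired with Ladyzhenskaya's $L^4$ interpolation inequality in two dimensions. Since $u = R(i\kappa)g$ lies in $\mathcal D \subset H^1_0(\Omega)$ and solves $(-\Delta + \kappa^2)u = g$, the Friedrichs characterization \eqref{e:frieddef} of the Dirichlet Laplacian applied with $v = \bar u$ yields the basic energy identity
\begin{equation*}
\|\nabla u\|_{L^2(\Omega)}^2 + \kappa^2 \|u\|_{L^2(\Omega)}^2 = \int_\Omega g\, \bar u.
\end{equation*}
(If $g$ is complex I take real parts at the end; alternatively the identity applies to $\re u$ and $\im u$ separately.) Let $A$ denote the left-hand side.

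Next I bound the right-hand side by Hölder's inequality $|\int g\bar u| \le \|g\|_{L^{4/3}(\Omega)} \|u\|_{L^4(\Omega)}$, and for $\|u\|_{L^4}$ I use the two-dimensional Ladyzhenskaya inequality
\begin{equation*}
 \|u\|_{L^4(\Omega)}^4 \le \tfrac 12 \|u\|_{L^2(\Omega)}^2 \|\nabla u\|_{L^2(\Omega)}^2,
\end{equation*}
which is legitimate because $u \in H^1_0(\Omega)$ extends by zero to a function in $H^1(\mathbb R^2)$ with identical $L^2$ and gradient $L^2$ norms, so the classical inequality on $\mathbb R^2$ transfers verbatim to the exterior domain. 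Combining gives $\|u\|_{L^4}^2 \le 2^{-1/2} \|u\|_{L^2}\|\nabla u\|_{L^2}$.

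To convert the product $\|u\|_{L^2}\|\nabla u\|_{L^2}$ into the quantity $A$ I apply the weighted AM-GM inequality $2\kappa\|u\|_{L^2}\|\nabla u\|_{L^2} \le \kappa^2\|u\|_{L^2}^2 + \|\nabla u\|_{L^2}^2 = A$. Chaining the estimates yields
\begin{equation*}
 A \le \|g\|_{L^{4/3}}\, \|u\|_{L^4} \le 2^{-1/4}\|g\|_{L^{4/3}}\bigl(\|u\|_{L^2}\|\nabla u\|_{L^2}\bigr)^{1/2} \le 2^{-1/4} \|g\|_{L^{4/3}}\,\bigl(A/(2\kappa)\bigr)^{1/2},
\end{equation*}
so $A^{1/2} \le 2^{-3/4}\kappa^{-1/2}\|g\|_{L^{4/3}}$, and squaring gives precisely $A \le (\sqrt 8\, \kappa)^{-1}\|g\|_{L^{4/3}}^2$.

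The main points requiring care are (i) justifying that $u \in \mathcal D$ actually belongs to $H^1_0(\Omega)$ so that the energy identity and the zero-extension step are legitimate (this is built into the definition of $\mathcal D$), and (ii) tracking the sharp constants through Ladyzhenskaya and AM-GM so that the final coefficient is $1/\sqrt 8$ rather than a suboptimal multiple of it. The latter is the only truly arithmetic point — the argument is otherwise a standard one-line energy estimate.
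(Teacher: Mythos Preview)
Your proof is correct and follows essentially the same route as the paper: energy identity from \eqref{e:frieddef}, H\"older to reach $\|u\|_{L^4}$, Ladyzhenskaya's two-dimensional interpolation inequality (valid after extending $u\in H^1_0(\Omega)$ by zero to $H^1(\mathbb R^2)$), and a Young/AM--GM absorption to close. The only cosmetic difference is that the paper keeps the two partial derivatives $\partial_{x_1}u$ and $\partial_{x_2}u$ separate in the Ladyzhenskaya step and then applies a four-term weighted Young inequality, whereas you pass immediately to $\|\nabla u\|_{L^2}$ and use a two-term AM--GM; both arrive at the same constant $1/\sqrt 8$.
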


\begin{proof}
By \eqref{e:frieddef} with $u=v$, and using H\"older's inequality, we have
\[
 \int_{\Omega} |\partial_{x_1} u|^2  +  \int_{\Omega} |\partial_{x_2} u|^2 +  \kappa^2  \int_{\Omega} |u|^2 = \int_{\Omega}  g\, \bar u \le \|g\|_{L^{4/3}} \| u\|_{L^4}.
\]
By equation (7.2) of Chapter 4 of \cite{taylor}, if $u \in H^1_0(\Omega)$ then $u \in H^1_0(\mathbb R^2)$, where we identify $u$ with its extension by $0$ from $\Omega$ to $\mathbb R^2$.
%Recall that we extend $u$ from $\Omega$ to $\mathbb R^2$ by putting $u(x) = 0$ for $x \not\in  \Omega$.
As in Supplement 2 of Chapter 1 of \cite{ladbook}, we use $|\varphi(t)| = \frac 12 |\int_{-\infty}^t \varphi' - \int_t^\infty \varphi'| \le \frac 12 \int_{\mathbb R} |\varphi'|$  to obtain
\[\begin{split}
 \int_{\mathbb R^2} |u|^4 &\le \int_{\mathbb R} \ \sup_{x_1 \in \mathbb R} |u(x_1,x_2)|^2\, dx_2  \int_{\mathbb R}\ \sup_{x_2 \in \mathbb R} |u(x_1,x_2)|^2\, dx_1 \\&\le \int_{\mathbb R^2} |u \partial_{x_1}u|\int_{\mathbb R^2} |u \partial_{x_2}u| \le \int_{\mathbb R^2}|u|^2  \Big(\int_{\mathbb R^2} |\partial_{x_1} u|^2\Big)^{1/2}\Big(\int_{\mathbb R^2} |\partial_{x_2} u|^2\Big)^{1/2}.
\end{split}\]
Hence, using also $a^{1/2}b^{1/4}c^{1/8}d^{1/8} \le \frac 12 a + \frac 14 b + \frac 18 c + \frac 18 d$ (see Theorem 9 of \cite{hlp}), we obtain
\[\begin{split}
 \int_{\Omega} |\partial_{x_1} u|^2  + & \int_{\Omega} |\partial_{x_2} u|^2 +  \kappa^2  \int_{\Omega} |u|^2 \le \|g\|_{L^{4/3}}\|u\|_{L^2}^{1/2} \|\partial_{x_1} u\|^{1/4}_{L^2} \|\partial_{x_2} u\|^{1/4}_{L^2}\\
%&= \Big(\frac{ 1}{2^{3/2} \kappa } \|g\|^2_{L^{4/3}}\Big)^{1/2}\Big(2 \kappa ^2 \|u\|_{L^2}^2 \Big)^{1/4} \Big(4\|\partial_{x_1} u\|^2_{L^2}\Big)^{1/8}\Big(4\|\partial_{x_2} u\|^2_{L^2}\Big)^{1/8} \\
&\le \frac{ 1}{\kappa \sqrt{32}} \Big(\int_\Omega|g|^{4/3}\Big)^{3/2} +  \frac {\kappa^2}2  \int_\Omega |u|^2 + \frac 12  \int_{\Omega} |\partial_{x_1} u|^2 + \frac 12  \int_{\Omega} |\partial_{x_2} u|^2 ,
\end{split}\]
which implies \eqref{e:kinv}.
\end{proof}

We will also need two basic facts about harmonic functions. The first is that, if $u$ is harmonic and bounded on $\{ x \in \Real^2: \; |x|>\rho\}$, then there are constants $c_0$, $c_{j,c}$, $c_{j,s}$, such that
\begin{equation}\label{eq:harmexp}
 u(r\cos \theta, r\sin \theta)=c_0+ \sum_{j=1}^\infty (c_{j,c}\cos j\theta + c_{j,s} \sin j\theta)r^{-j}, \qquad \text{for }r>\rho.
\end{equation}

The second gives the way that the assumption that $\mathscr O$ is not polar is used in our main results. 

\begin{lem}\label{l:noresat0}
If $\mathscr O$ is not polar, then the only bounded harmonic function in $\mathcal D_{\text{loc}}$  is the zero function.
\end{lem}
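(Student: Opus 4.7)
The plan is a Dirichlet-energy argument to show $u$ is locally constant, followed by a capacitary trace argument using non-polarity to rule out nonzero constants.

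First, the expansion \eqref{eq:harmexp} applied on $\{|x|>\rho\}$ gives $u(x)=c_0+O(|x|^{-1})$ and $\nabla u(x)=O(|x|^{-2})$ as $|x|\to\infty$. Combined with $\chi u\in H^1_0(\Omega)$ for a cutoff $\chi$ equal to $1$ near $\mathscr O$, this yields $\int_\Omega|\nabla u|^2\,dx<\infty$. Choose cutoffs $\chi_R\in C_0^\infty(\mathbb R^2)$ with $\chi_R\equiv 1$ on $D_R$, $\operatorname{supp}\chi_R\subset D_{2R}$, and $\|\nabla\chi_R\|_\infty=O(R^{-1})$. Since $\chi_R u\in H^1_0(\Omega)$, I approximate it by $v_n\in C_0^\infty(\Omega)$ in $H^1$; as $\Delta u=0$ on $\Omega$, $\int\nabla u\cdot\nabla v_n=\int(-\Delta u)v_n=0$, and passing to the limit gives
\begin{equation*}
\int_\Omega\chi_R|\nabla u|^2\,dx=-\int_\Omega u\,\nabla u\cdot\nabla\chi_R\,dx.
\end{equation*}
The right-hand side is $O(R^{-1})\to 0$ (from $|u|\le M$, $|\nabla u|=O(R^{-2})$, $|\nabla\chi_R|=O(R^{-1})$, and annulus area $O(R^2)$), so $\int_\Omega|\nabla u|^2=0$ by monotone convergence, and hence $u$ is locally constant on each connected component of $\Omega$.

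Suppose for contradiction that $u\equiv c\neq 0$ on some component $V$ of $\Omega$. For a cutoff $\chi$ equal to $1$ in a neighborhood of $\mathscr O$, the function $\chi u\in H^1_0(\Omega)$ extends by zero to an element of $H^1(\mathbb R^2)$; this extension equals $c$ on $V$ near $\partial V$ and equals $0$ on $\mathscr O$. Comparing traces (quasi-continuously) across $\partial V\subset\partial\mathscr O$ then forces $\partial V$ to be polar. But non-polarity of $\mathscr O$ rules this out: if $\mathbb R^2\setminus\overline V$ is nonempty, then $\partial V$ separates the disjoint nonempty open sets $V$ and $\mathbb R^2\setminus\overline V$, which a polar set in $\mathbb R^2$ cannot do; while if $\overline V=\mathbb R^2$, then $\partial V\supset\mathscr O$ is non-polar directly. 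Contradiction, so $u\equiv 0$.

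The main obstacle is the capacitary-trace claim: if $w\in H^1(\mathbb R^2)$ takes two distinct constant values on the two sides of a set $S\subset\mathbb R^2$, then $S$ must be polar. This is standard (see Appendix~\ref{a:nonpolar} or classical references on capacity); once accepted, the rest of the argument is short and purely combinatorial/topological.
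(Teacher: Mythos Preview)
Your first step, the Dirichlet-energy argument, is correct and is essentially the paper's approach done more carefully: you justify the integration by parts via cutoffs and $H^1_0$-approximation, while the paper writes the Green identity $\int_{\Omega\cap D_\rho}|\nabla u|^2=\int_{\partial D_\rho}u\,\partial_r\bar u\,dS=O(\rho^{-1})$ directly.

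Your second step diverges from the paper and has a gap. The paper simply invokes Lemma~\ref{l:nonpuse}: the constant $1$ lies locally in $H^1_0(\Omega)$ only if $\mathscr O$ is polar, so no nonzero locally constant function can be in $\mathcal D_{\text{loc}}$. You instead aim for a capacitary-trace argument, but your summary of the key claim is internally inconsistent: you assert that if $w\in H^1(\mathbb R^2)$ takes two distinct constant values on the two sides of $S$ then $S$ must be polar, yet in the next breath you use that a polar set cannot separate $\mathbb R^2$ into two sides. The honest version of what you need is that the quasicontinuous representative of $w$, being $c$ on the open set $V$ near $\partial V$ and $0$ q.e.\ on $\mathscr O\supset\partial V$, forces $\partial V$ to have zero capacity; but showing the one-sided trace from $V$ equals $c$ q.e.\ on $\partial V$ is nontrivial for irregular boundaries and is not supplied by Appendix~\ref{a:nonpolar}. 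A cleaner repair stays close to the paper: restricting $\chi u\in H^1_0(\Omega)$ to the component $V$ gives $c\,\chi|_V\in H^1_0(V)$, and then either Poincar\'e's inequality (if $V$ is bounded) or Lemma~\ref{l:nonpuse} applied with $V$ in place of $\Omega$ (if $V$ is the unbounded component, so that $\mathbb R^2\setminus V$ is compact and contains $\mathscr O$) yields the contradiction directly, with no quasi-continuity or separation facts needed.
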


\begin{proof}
Let $u \in \mathcal D_{\text{loc}}$ be harmonic and bounded, and let $D_\rho = \{x \in \mathbb R^2 \colon |x|<\rho\}$. By \eqref{eq:harmexp},
\[
 \int_{\Omega \cap D_\rho} |\nabla u|^2 = \int_{\partial D_\rho} u\, \partial_r \bar u \, dS= O(\rho^{-1}), \qquad \text{ as } \rho \to \infty,
\]
which implies that $\nabla u$ is identically $0$. Because $\mathscr O$ is not polar, the only locally constant function in $\mathcal D_{\text{loc}}$ is the zero function: see the implication (2) $\Longrightarrow$ (1) of Lemma \ref{l:nonpuse}.
\end{proof}

\subsection{Vodev's identity}\label{s:vod}

Recall that $\chi_1 \in C_0^\infty(\mathbb R^2)$ is $1$ near $\mathscr{O}$. Let $z$ and $\lambda$ be in the upper half plane. To relate the resolvents $R(\lambda)$ and $R_0(\lambda)$, we start by using 
\[R(\lambda)(1-\chi_1)(-\Delta - \lambda^2)R_0(\lambda) = R(\lambda)\{(-\Delta -\lambda^2)(1-\chi_1) + [\chi_1,\Delta]\}R_0(\lambda)\]
 to write
\begin{equation}\label{e:rlam0}
 R(\lambda)(1-\chi_1) = \{1-\chi_1 - R(\lambda)[\Delta,\chi_1]\}R_0(\lambda).
\end{equation}
Similarly to \eqref{e:rlam0} we have
\begin{equation}\label{e:rlr0l}
 (1-\chi_1)R(z) = R_0(z)\{1-\chi_1 + [\Delta,\chi_1]R(z)\}.
\end{equation}
Note, for later reference, that combining \eqref{e:rlr0l} with \eqref{e:r0exp} shows that
\begin{equation}\label{e:rexp}
 f  \in L^2_{c}(\Omega) \text{ and } \im \lambda > 0 \qquad \Longrightarrow \qquad  R(z) f
 (x) = O(e^{-|x|\im \lambda}), \quad \text{as } |x| \to \infty.
\end{equation}
Inserting \eqref{e:rlr0l} and \eqref{e:rlam0} into
\[
 R(\lambda) - R(z) = (\lambda^2 - z^2)R(\lambda)R(z) = (\lambda^2 - z^2)\Big(R(\lambda)\chi_1(2-\chi_1)R(z) + R(\lambda)(1-\chi_1)^2 R(z)\Big),
\]
gives
\[
\begin{split}
R(\lambda)-R(z)= &(\lambda^2-z^2)\Big(R(\lambda) \chi_1(2-\chi_1)R(z)
\\
&+ \{ (1-\chi_1)-R(\lambda)[\Delta,\chi_1]\} R_0(\lambda)R_0(z)
\{ (1-\chi_1)+[\Delta,\chi_1]R(z)\}\Big).
\end{split}
\]
Plugging in $(\lambda^2 - z^2)R_0(\lambda)R_0(z) = R_0(\lambda)-R_0(z)$,
and introducing the notation
\begin{equation}\label{e:k1def}
 K_1 = 1 - \chi_1 +[\Delta,\chi_1] R(z),
\end{equation}
gives
\begin{equation}\label{e:vodevido}
\begin{split}
R(\lambda)-R(z)= &(\lambda^2-z^2)R(\lambda) \chi_1(2-\chi_1)R(z) + \{ 1-\chi_1-R(\lambda)[\Delta,\chi_1]\} (R_0(\lambda)-R_0(z))K_1.
\end{split}
\end{equation}
We now bring the $R(\lambda)$ terms to the left, the remaining terms to the right, and factor, obtaining
\begin{equation}\label{e:vodevid}
 R(\lambda)  (I - K(\lambda)) = F(\lambda),
\end{equation}
where
\begin{align}\label{eq:F}
 K(\lambda) &= (\lambda^2 - z^2)\chi_1(2-\chi_1)R (z)  - [\Delta, \chi_1](R_0(\lambda) - R_0(z))K_1, \nonumber\\
 F(\lambda) &= R(z) + (1-\chi_1)(R_0(\lambda) - R_0(z))K_1 .
\end{align}
Here and below we shorten formulas by using notation which displays $\lambda$-dependence but not $z$-dependence for operators other than resolvents.  The identities \eqref{e:vodevido} and \eqref{e:vodevid} are versions of Vodev's resolvent identity \cite[$(5.4)$]{vo14}.

For any $\chi \in C_0^\infty(\mathbb R^2),$ the resolvent $R(\lambda)$ continues meromorphically to $\Lambda$, the logarithmic cover of $\Complex \setminus \{0\}$. This well-known fact follows directly from Vodev's identity, as in Section~2 of \cite{cd}. Indeed, take $\chi \in C_0^\infty(\mathbb R^2)$ such that $\chi$ is $1$ near the support of $\chi_1$ and multiply \eqref{e:vodevid} on the left and right by $\chi$. That gives $\chi R(\lambda)\chi(I-K(\lambda)\chi) = \chi F(\lambda)\chi$. Observe now that $K(\lambda) \chi$ is compact $L^2(\Omega) \to L^2(\Omega)$, and $\|K(\lambda)\|_{L^2(\Omega) \to L^2(\Omega)} \to 0$ as $\lambda \to z$. Consequently, by the analytic Fredholm theorem (Theorem 4.19 of \cite{borthwick}), $\chi R(\lambda) \chi =  \chi F(\lambda)\chi(I-K(\lambda)\chi)^{-1}$ continues meromorphically from the upper half plane to $\Lambda$, the Riemann surface of $\log \lambda$.

Thus  \eqref{e:vodevido} and \eqref{e:vodevid} continue to hold for any $z$ and $\lambda$ in $\Lambda$, with $K(\lambda)$ and $K_1$  mapping 
$L^2_c(\Omega)$ to $L^2_c(\Omega)$, and
$R(\lambda)$ and $F(\lambda)$ mapping  $L^2_c(\Omega)\rightarrow \mathcal{D}_{\loc}$. 

\subsection{Resolvent expansions}

We derive our series formula for the resolvent near $\lambda=0$, which is the main result of Theorem \ref{t:res}, over the course of several lemmas, in which we establish successively more explicit formulas for $R(\lambda)$ based on Vodev's identity \eqref{e:vodevid}. The first lemma is partly based on the proof of Proposition 3.1 from \cite{vo99}:

\begin{lem}\label{l:rfk}
There is $z_0 > 0$ such that for every $z$ on the positive imaginary axis obeying $0<-iz\le z_0$ and for every $\varphi_0>0$, there is $\lambda_0>0$  such that for all $\lambda \in z$ with $|\lambda| \le \lambda_0$ and $|\arg \lambda| \le \varphi_0$, the operator $I-K(\lambda)$ is invertible $L^2_c(\Omega) \to L^2_c(\Omega)$, and 
\begin{equation}\label{e:rfk}
R(\lambda) = F(\lambda)(I-K(\lambda))^{-1}.
\end{equation}
Moreover,
\begin{equation}\label{eq:inv}
( I - K(\lambda))^{-1} = D(\lambda)\Big(I + \frac{A(\lambda) D(\lambda) }{1 - (\log \lambda - \log z)\alpha(\lambda)} \Big),
\end{equation}
where 
\[w = \frac 1 {2\pi} \Delta \chi_1, \qquad A(\lambda) = (\log \lambda - \log z) (w \otimes 1)  K_1,\] 
and
%$D(\lambda)$ is a bounded family of operators $\mathcal H \to \mathcal H$ such that
%\begin{equation}\label{e:balphabd}
% \|D(\lambda) - D(0)\|_{\mathcal H \to \mathcal H} = O(\lambda^2 \log \lambda), \qquad |\alpha(\lambda) - \alpha(0)| = O(\lambda^2 \log \lambda).
%\end{equation}
%More precisely,
\begin{equation}\label{e:dseries}
D(\lambda) = \sum_{j=0}^\infty \sum_{k=0}^j D_{2j,k}\lambda^{2j}(\log \lambda)^k, \qquad \alpha(\lambda) = \int_{\Omega} K_1 D(\lambda)w dx=  \sum_{j=0}^\infty \sum_{k=0}^j \alpha_{2j,k}\lambda^{2j}(\log \lambda)^k,
\end{equation}
for some bounded operators $D_{2j,k}\colon L^2_c(\Omega)\to  L^2_c(\Omega)$ and complex numbers $\alpha_{2j,k}$ which depend on $z$ but not on $\lambda$. If $k\not =0$ then $D_{2j,k}$ has  finite rank. The series  converge  uniformly for all $\lambda \in \Lambda$ with $0<|\lambda| \le \lambda_0$ and $|\arg \lambda| \le \varphi_0$. Finally, we have the following variant of Vodev's identity:
 \begin{equation}
 \label{eq:vodevmod}R(\lambda)(I-A(\lambda)D(\lambda))= F(\lambda) D(\lambda).
 \end{equation}
%More precisely,
%\begin{equation}
% D(\lambda) = (I - \tilde K(\lambda))^{-1},  \quad \text{where} \quad  \tilde  K(\lambda) = K(\lambda) - A(\lambda), \quad A(\lambda) = (\log \lambda - \log z) (w \otimes 1)  K_1.
%\end{equation}
\end{lem}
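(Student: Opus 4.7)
The plan is to split $K(\lambda) = A(\lambda) + \tilde K(\lambda)$ so that $A(\lambda)$ collects the full singularity of $K$ at $\lambda = 0$, to invert the remainder $I - \tilde K(\lambda)$ by a Neumann series giving $D(\lambda)$, and then to incorporate the rank-one correction via Sherman--Morrison. Writing $R_0(\lambda) - R_0(z) = R_{01}(\log\lambda - \log z) + (\tilde R_0(\lambda) - \tilde R_0(z))$ using \eqref{e:r0series}, and observing that $R_{01} = -\tfrac{1}{2\pi}(1\otimes 1)$ while $[\Delta,\chi_1]$ applied to a constant $c$ produces $c\Delta\chi_1 = 2\pi c w$, one checks $-[\Delta,\chi_1] R_{01} = w \otimes 1$; substitution into \eqref{eq:F} yields $K(\lambda) = A(\lambda) + \tilde K(\lambda)$ with $\tilde K(\lambda) = (\lambda^2 - z^2)\chi_1(2-\chi_1) R(z) - [\Delta,\chi_1](\tilde R_0(\lambda) - \tilde R_0(z)) K_1$.

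The main step is producing $D(\lambda) := (I - \tilde K(\lambda))^{-1}$ with the series form \eqref{e:dseries}. Substituting the expansion of $\tilde R_0$, the operator $\tilde K(\lambda)$ has a convergent double series $\sum_{j \ge 0,\ 0 \le k \le j} \tilde K_{2j,k}\lambda^{2j}(\log\lambda)^k$ on $L^2_c(\Omega) \to L^2_c(\Omega)$, uniformly on sectors. Its constant term is $\tilde K_{0,0} = -z^2\chi_1(2-\chi_1) R(z) - [\Delta,\chi_1](R_{00} - \tilde R_0(z)) K_1$. By Lemma \ref{l:resapriori} we have $\|z^2 R(z)\chi\| = O(|z|^{1/2})$, and \eqref{e:h10ser} gives $\|R_{00} - \tilde R_0(z)\| = O(|z|^2\log|z|^{-1})$ between compactly supported cutoffs, so $\|\tilde K_{0,0}\| \to 0$ as $z \to 0$ along the positive imaginary axis. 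Fixing such $z$ with $0 < -iz \le z_0$ small enough to force $\|\tilde K_{0,0}\| < 1$, the Neumann series $D(\lambda) = \sum_n \tilde K(\lambda)^n$ converges on a sector $|\lambda|\le\lambda_0$, $|\arg\lambda|\le\varphi_0$. Regrouping by monomials $\lambda^{2j}(\log\lambda)^k$, and noting that products of such monomials preserve the constraint $k\le j$, gives the stated form \eqref{e:dseries}. The finite rank of $D_{2j,k}$ for $k\ne 0$ follows inductively, since each $R_{2m,1}(x,y) = -\frac{1}{2\pi}\frac{(-|x-y|^2/4)^m}{(m!)^2}$ is polynomial in $x,y$ and therefore finite rank between compactly supported cutoffs.

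The decomposition $I - K = (I - \tilde K)(I - DA)$ and the algebraic identity $(I - DA)^{-1} D = D(I - AD)^{-1}$ give $(I - K)^{-1} = D(I - AD)^{-1}$. The operator $A(\lambda)D(\lambda)$ is rank one, sending $f$ to $(\log\lambda - \log z)\, w \int K_1 D(\lambda) f$, with trace $(\log\lambda - \log z)\alpha(\lambda)$ where $\alpha(\lambda) = \int_\Omega K_1 D(\lambda) w\,dx$. Sherman--Morrison then gives $(I - AD)^{-1} = I + AD/(1 - (\log\lambda - \log z)\alpha(\lambda))$, valid after possibly shrinking $\lambda_0$: the denominator is continuous and nonzero near $\lambda = 0$, since $(\log\lambda - \log z)\alpha(\lambda)$ either has modulus tending to $\infty$ (when $\alpha_{0,0}\ne 0$) or tends to $0$ (when $\alpha_{0,0}=0$). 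This yields \eqref{eq:inv}, and the series for $\alpha(\lambda)$ is obtained by integrating $K_1 D(\lambda) w$ termwise. Finally, multiplying $R(\lambda)(I - K(\lambda)) = F(\lambda)$ on the right by $D(\lambda)$ and using $(I-\tilde K)D = I$ gives $R(\lambda)(I - A(\lambda)D(\lambda)) = F(\lambda)D(\lambda)$, which is \eqref{eq:vodevmod}. The main technical hurdle is the quantitative smallness of $\tilde K_{0,0}$: this is where the Ladyzhenskaya-type estimate in Lemma \ref{l:resapriori} is used essentially, and it is what forces $z$ to be taken near zero on the imaginary axis.
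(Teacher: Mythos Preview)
Your proof is correct and follows essentially the same route as the paper: split off the rank-one piece $A(\lambda)$, invert $I-\tilde K(\lambda)$ by a Neumann series using smallness of $\tilde K_{0,0}$ via \eqref{e:resapriori}, and then handle the rank-one correction. The only cosmetic differences are that you factor $I-K=(I-\tilde K)(I-DA)$ and invoke Sherman--Morrison plus the push-through identity $(I-DA)^{-1}D=D(I-AD)^{-1}$, whereas the paper writes $I-K=(I-AD)D^{-1}$ and inverts $I-AD$ by solving $u-ADu=f$ directly; you also make explicit the case split $\alpha_{0,0}\ne 0$ versus $\alpha_{0,0}=0$ ensuring $1-(\log\lambda-\log z)\alpha(\lambda)\ne 0$ for small $\lambda$, which the paper leaves implicit.
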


To define $A(\lambda)$ we used the notation $(w \otimes 1)f =w \int_{\Omega} f$ for $f \in L^2_c(\Omega)$, and the fact that $K_1$ maps $L^2_c(\Omega)$ into $L^2_{c}(\Omega)$.
 %so the integral in \eqref{e:dseries} makes sense. 
 We understand \eqref{e:rfk}  and \eqref{eq:vodevmod} as identities of operators mapping $L^2_c(\Omega)\rightarrow \mathcal{D}_{\operatorname{loc}}$.

\begin{proof}
We define $ \tilde  K(\lambda) = K(\lambda) - A(\lambda).$ Writing
\[
 \tilde K(\lambda) = (\lambda^2-z^2) \chi_1(2-\chi_1)R(z) +[\Delta, \chi_1]\big(\tilde R_0(z) - \tilde R_0(\lambda)\big)K_1,
\]
%\[\begin{split}
%C(\lambda) = &\lambda^2 \chi_1(2-\chi_1)R(z)\chi -[\Delta, \chi_1](\tilde R_0(\lambda) - \tilde R_0(0))\tilde K, \\
%C'=  - &z^2 \chi_1(2-\chi_1)R(z)\chi + [\Delta, \chi_1](\tilde R_0(z) - \tilde R_0(0))\tilde K.
%\end{split}\]
and using the free resolvent series \eqref{e:r0series} shows that 
\[
 \tilde K(\lambda) = \sum_{j=0}^\infty \sum_{k=0}^{\min(j,1)} K_{2j,k} \lambda^{2j}(\log \lambda)^k,
\]
for some bounded operators $K_{2j,k}\colon L^2_c(\Omega)  \to L^2_c(\Omega) $ which depend on $z$ but not on $\lambda$. If $k\not =0$ then $K_{2j,k}$ has  finite rank, because, by the Hankel function series \eqref{e:h10ser}, if $k\ne 0$ then $R_{2j,k}$ has finite rank. Since
\[
 K_{00} = -z^2 \chi_1(2-\chi_1)R(z) +[\Delta, \chi_1]\big(\tilde R_0(z) - \tilde R_0(0)\big)K_1,
\]
we see, using the resolvent bounds \eqref{e:resapriori} and $\|\chi(\tilde R_0(z) - \tilde R_0(0))\chi\|_{L^2(\mathbb R^2) \to H^2(\mathbb R^2)} = O(z^2\log z)$ from \eqref{e:r0series},  that $\|K_{00}
\chi\|_{L^2 \to L^2} \to 0$ as $z_0 \to 0$.
Hence, if $|z_0|$ is small enough and $|\lambda_0|$ is small enough depending on $z$, then $I- \tilde K(\lambda)$ is invertible 
%$\mathcal H \to \mathcal H$ and 
$L^2_c(\Omega) \to L^2_c(\Omega)$ with  
\begin{equation}\label{e:ikd}
 I - K(\lambda) = (I-A(\lambda)D(\lambda))D(\lambda)^{-1},  \; D(\lambda) = (I - \tilde K(\lambda))^{-1}= (I-\tilde{K}(\lambda)\chi_2)^{-1}(I+\tilde{K}(\lambda)(1-\chi_2)).
\end{equation}
Recall that  $\chi_2\in C_c^\infty(\Real^2)$ is $1$ on the support of $\chi_1$.
By Vodev's identity \eqref{e:vodevid}, it remains to show that $I-A(\lambda)D(\lambda)$ is invertible with
\begin{equation}\label{e:iad-1}
(I - A(\lambda)D(\lambda))^{-1} = I + \frac{(\log \lambda - \log z) (w \otimes 1) K_1 D(\lambda) }{1 - (\log \lambda - \log z)\alpha(\lambda)}.
\end{equation}
For this, observe that $u$ solves $u - A(\lambda)D(\lambda) u = f$ if and only if $u = f + c(\lambda) w$, where
\[
c(\lambda) w - c(\lambda)A(\lambda)D(\lambda)w = A(\lambda)D(\lambda)f.
\]
Pairing with $w$, plugging in $ A(\lambda)D(\lambda)w = (\log \lambda - \log z) \alpha (\lambda)w$, and solving for $c(\lambda)$ gives
\[\begin{split}
 c(\lambda) &= \frac{\langle A(\lambda)D(\lambda)f,w\rangle_{\mathcal H}}{(1 - (\log \lambda - \log z)\alpha(\lambda))\|w\|^2_{\mathcal H}} =  \frac{\log \lambda - \log z}{1 - (\log \lambda - \log z)\alpha(\lambda)}\int_{\mathbb R^2} K_1 D(\lambda) f.
\end{split}\]
This implies \eqref{e:iad-1} and concludes the proof.
\end{proof}

Our next lemma rewrites the formula \eqref{e:rfk} for $R(\lambda)$ as a sum of terms of rank one and terms where $\log \lambda$ does not appear in the denominator.

\begin{lem}\label{l:smsing}
Under the assumptions and notation of Lemma \ref{l:rfk}, we have
\begin{equation}\label{eq:smsing}\begin{split}
 R(\lambda) = &\frac{\log \lambda}{1-(\log \lambda-\log z)\alpha(\lambda)}\left(  \left(  \frac{  -1}{2\pi} (1-\chi_1)    +\tilde{F}(\lambda) D(\lambda) w\right) \otimes 1      \right) K_1D(\lambda)  \\ & 
 - \frac{\log z}{1-(\log \lambda-\log z)\alpha(\lambda)} \tilde{F}(\lambda)D(\lambda) (w\otimes 1)K_1D(\lambda) + 
  \tilde{F}(\lambda)D(\lambda),
\end{split}\end{equation}
where 
\begin{equation}\label{e:fseries}
 F(\lambda) = \sum_{j=0}^\infty \sum_{k=0}^1  F_{2j,k} \lambda^{2j} (\log \lambda)^k =  F_{01} \log \lambda + \tilde F(\lambda),
\end{equation}
each $F_{2j,k}$ is bounded $L^2_c(\Omega) \to \mathcal{D}_{\text{loc}}$, if $k\not =0$ then $F_{2j,k}$ has  finite rank, and
\begin{equation}\label{e:ftilde}
F_{01} = -\frac{1}{2\pi}(1-\chi_1)(1 \otimes 1)K_1.
\end{equation}
\end{lem}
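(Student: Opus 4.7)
The plan is to substitute the explicit inverse formula \eqref{eq:inv} into \eqref{e:rfk} and then isolate the $\log\lambda$ singularity carried by $F(\lambda)$ itself. Concretely, plugging \eqref{eq:inv} into $R(\lambda) = F(\lambda)(I-K(\lambda))^{-1}$ and using the rank-one factorization $A(\lambda) = (\log\lambda - \log z)(w\otimes 1)K_1$ immediately yields
\[
R(\lambda) \;=\; F(\lambda)D(\lambda) \;+\; \frac{(\log\lambda-\log z)\,\bigl(F(\lambda)D(\lambda)w\otimes 1\bigr)K_1 D(\lambda)}{\beta(\lambda)},
\]
where I write $\beta(\lambda) := 1-(\log\lambda-\log z)\alpha(\lambda)$. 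Everything after this is algebraic reorganization; no new analytic input is needed beyond Lemma~\ref{l:rfk}.

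Next I would extract the $\log\lambda$-piece of $F(\lambda)$. The definition \eqref{eq:F} gives $F(\lambda) = R(z) + (1-\chi_1)(R_0(\lambda)-R_0(z))K_1$, and since the free resolvent series \eqref{e:r0series} only involves $(\log\lambda)^k$ with $k\in\{0,1\}$, the expansion \eqref{e:fseries} follows at once, with $F_{01} = (1-\chi_1)R_{01}K_1 = -\frac{1}{2\pi}(1-\chi_1)(1\otimes 1)K_1$, proving \eqref{e:ftilde}. The finite-rank claim for $k\ne 0$ (i.e., for $F_{2j,1}$) follows because, by \eqref{e:r0bess}--\eqref{e:h10ser}, the coefficient $R_{2j,1}$ has polynomial integral kernel proportional to $|x-y|^{2j}$; this maps $L^2_c$ into the finite-dimensional space of polynomials of degree at most $2j$, so $(1-\chi_1)R_{2j,1}K_1$ is finite rank. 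The critical computation is then $F_{01}D(\lambda)w = -\frac{1}{2\pi}(1-\chi_1)\int_\Omega K_1 D(\lambda)w = -\frac{1}{2\pi}(1-\chi_1)\alpha(\lambda)$, by the definition of $\alpha$ in \eqref{e:dseries}.

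The last step is the cancellation. Substituting $F = F_{01}\log\lambda + \tilde F$ into both copies of $F(\lambda)D(\lambda)$ in the displayed formula above, the contribution proportional to $((1-\chi_1)\otimes 1)K_1 D(\lambda)$ comes out with coefficient $-\frac{\log\lambda}{2\pi}\bigl(1 + \frac{(\log\lambda-\log z)\alpha(\lambda)}{\beta(\lambda)}\bigr) = -\frac{\log\lambda}{2\pi\beta(\lambda)}$, using the tautological identity $\beta(\lambda) + (\log\lambda-\log z)\alpha(\lambda) = 1$. Splitting the remaining prefactor as $\log\lambda - \log z$ on the $\tilde F(\lambda)D(\lambda)w\otimes 1$ piece then reorganizes everything into the three terms of \eqref{eq:smsing}. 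The main (mild) obstacle here is bookkeeping: one must keep the two $(1-\chi_1)$-contributions aligned so that the identity $\beta + (\log\lambda-\log z)\alpha = 1$ can be applied to collapse them into a single term with denominator $\beta(\lambda)$, since missing this cancellation would obscure the clean rank-one structure the lemma is designed to expose.
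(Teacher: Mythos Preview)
Your proof is correct and follows essentially the same route as the paper: both arguments insert the rank-one inverse formula \eqref{eq:inv} into \eqref{e:rfk}, split $F(\lambda)=F_{01}\log\lambda+\tilde F(\lambda)$, and use the identity $\beta(\lambda)+(\log\lambda-\log z)\alpha(\lambda)=1$ to collapse the two $(1-\chi_1)$-contributions into a single term with denominator $\beta(\lambda)$. The only difference is the order of operations---the paper first simplifies $(1\otimes 1)K_1(I-K(\lambda))^{-1}$ and then substitutes, whereas you expand $F(\lambda)(I-K(\lambda))^{-1}$ fully and then split---but the algebra and the key cancellation are identical.
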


\begin{proof}
The statements about the $F_{2j,k}$ follow from the free resolvent series \eqref{e:h10ser} and \eqref{e:r0series}.

Next, to simplify the contribution of $F_{01}$ in the resolvent formula \eqref{e:rfk}, we write
\[\begin{split}
(1\otimes 1) K_1 (I-K(\lambda))^{-1}& =
(1\otimes 1) K_1 D(\lambda) \left( I+ \frac{(\log \lambda -\log z)(w\otimes 1)K_1 D(\lambda) }{1-(\log \lambda-\log z) \alpha(\lambda)}\right)\\
 & = \left( 1+ \frac{(\log \lambda -\log z) \int_{\mathbb R^2} K_1 D(\lambda)w   }{1-(\log \lambda-\log z) \alpha(\lambda)}\right) (1\otimes 1) K_1 D(\lambda)  \\
 & = \left( 1+\frac{(\log \lambda -\log z)\alpha(\lambda)  }{1-(\log \lambda-\log z) \alpha(\lambda)}\right) (1\otimes 1) K_1 D(\lambda) \\
 & = \left( \frac{1 }{1-(\log \lambda-\log z) \alpha(\lambda)}\right) (1\otimes 1) K_1 D(\lambda).
\end{split}\]
Combining with the formula  \eqref{e:ftilde} for $F_{01}$ and plugging into  \eqref{e:rfk} gives
\[\begin{split}
 R(\lambda)= &- \frac 1 {2\pi} \left( \frac{\log \lambda }{1-(\log \lambda-\log z) \alpha(\lambda)}\right) ((1-\chi_1) \otimes 1) K_1 D(\lambda) + \tilde F(\lambda) (I-K(\lambda))^{-1}\\
=&\frac{\log \lambda}{1-(\log \lambda-\log z)\alpha(\lambda)}\left(  \frac{  -1}{2\pi}  ((1-\chi_1) \otimes 1)    +\tilde{F}(\lambda) D(\lambda) (w\otimes 1)      \right) K_1D(\lambda)
 \\ & +   \tilde{F}(\lambda)D(\lambda) \left(I- \frac{\log z}{1-(\log \lambda-\log z)\alpha(\lambda)} (w\otimes 1)K_1D(\lambda) \right),
\end{split}\]
which in turn gives \eqref{eq:smsing}.
\end{proof}

In our next lemma we prove that if  $\mathscr O$ is not polar, then the absolute value of the denominators in \eqref{eq:smsing} tends to infinity as $\lambda \to 0$.

\begin{lem}\label{l:a00}
 Under the assumptions and notation of Lemma \ref{l:smsing}, if $\alpha_{00} = 0$ then $\mathscr O$ is polar.
\end{lem}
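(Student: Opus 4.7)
The plan is to prove the contrapositive via Lemma \ref{l:noresat0}: assuming $\alpha_{00} = 0$, I will exhibit a nonzero bounded harmonic function in $\mathcal D_{\text{loc}}$, which would force $\mathscr O$ to be polar.

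The construction starts from the identity \eqref{eq:vodevmod} applied to $w$. Since $A(\lambda) D(\lambda) w = (\log \lambda - \log z) \alpha(\lambda) w$, it reduces to the scalar identity $(1 - (\log\lambda - \log z)\alpha(\lambda)) R(\lambda) w = F(\lambda) D(\lambda) w$. The hypothesis $\alpha_{00} = 0$ together with \eqref{e:dseries} forces $\alpha(\lambda) = O(\lambda^2 \log \lambda)$, so the scalar factor on the left tends to $1$ as $\lambda \to 0$ from the upper half plane; on the right, the only term of $F(\lambda) D(\lambda) w$ carrying a $\log\lambda$ factor equals $F_{01} \log\lambda \cdot D(\lambda) w = -(2\pi)^{-1}(1-\chi_1) \log\lambda \cdot \alpha(\lambda)$, which vanishes for the same reason. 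Thus $R(\lambda) w$ converges in $\mathcal D_{\text{loc}}$ to $u := \tilde F(0) D_{00} w$, and passing $(-\Delta - \lambda^2) R(\lambda) w = w$ to the limit yields $-\Delta u = w$ in $\Omega$.

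Next I would analyze $u$ at infinity, using $\tilde F(0) = R(z) + (1-\chi_1)(R_{00} - R_0(z)) K_1$. The contributions $R(z) D_{00} w$ and $(1-\chi_1) R_0(z)(K_1 D_{00} w)$ decay exponentially, as their inputs lie in $L^2_c(\Omega)$ and $\im z > 0$. For the remaining piece $(1-\chi_1) R_{00}(K_1 D_{00} w)$, apply \eqref{e:r00xinf} with $f = K_1 D_{00} w$: its leading $\log|x|$ and constant contributions are both proportional to $\int f = \alpha_{00} = 0$, so only the $O(|x|^{-1})$ tail survives. Consequently $u(x) \to 0$ as $|x| \to \infty$.

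Finally, set $\psi := u - (1-\chi_1)/(2\pi)$. Since $(1-\chi_1)$ vanishes near $\mathscr O$, the smooth function $(1-\chi_1)/(2\pi)$ lies in $\mathcal D_{\text{loc}}$, and hence so does $\psi$. The identity $-\Delta[(1-\chi_1)/(2\pi)] = (2\pi)^{-1}\Delta \chi_1 = w$ combines with $-\Delta u = w$ to give $-\Delta \psi = 0$, so $\psi$ is harmonic on $\Omega$. Near infinity $\chi_1 \equiv 0$ and $u \to 0$, so $\psi(x) \to -1/(2\pi) \ne 0$; in particular $\psi$ is bounded at infinity and nontrivial, and Lemma \ref{l:noresat0} then forces $\mathscr O$ to be polar. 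The main obstacle is the decay analysis in the second paragraph, where the hypothesis $\alpha_{00}=0$ is used precisely to cancel the otherwise logarithmic growth of $R_{00}(K_1 D_{00} w)$ at infinity.
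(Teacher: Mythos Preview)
Your proof is correct and constructs exactly the same function as the paper: your $\psi = F_{00}D_{00}w - \tfrac{1}{2\pi}(1-\chi_1)$ is the paper's $u_A$. The asymptotic analysis at infinity is also identical (it is recorded in the paper as \eqref{e:f00d00}).

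The one genuine difference is in how harmonicity of $\psi$ is verified. The paper reads it off from the term-matching identity \eqref{eq:tecons} applied to the $\log\lambda$ coefficient of the resolvent expansion; since that coefficient is the rank-one operator $(u_A\otimes 1)K_1D_{00}$, the identity only yields $(\Delta u_A)\int K_1D_{00}f = 0$ for all $f$, and the paper then needs a separate argument to produce an $f$ with $\int K_1D_{00}f \ne 0$. Your route --- passing $(-\Delta-\lambda^2)R(\lambda)w = w$ to the limit to get $-\Delta u = w$, then subtracting $(1-\chi_1)/(2\pi)$, whose Laplacian is also $-w$ --- bypasses this auxiliary nonvanishing step entirely and is cleaner. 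The limit is justified because convergence holds in $\mathcal D_{\text{loc}}$, where $\Delta$ is continuous into $L^2_{\text{loc}}$.
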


For the proof of Lemma \ref{l:a00} we will use the following observation, which will also be useful later. 
Suppose there are operators $A_{2j,k}:L^2_c(\Omega)\rightarrow \mathcal{D}_{\operatorname{loc}}$, integers $K(j)\in \mathbb{N}$ and an $a\in \Complex$ so that for any $\chi\in C_0^\infty(\Real^2)$,
$\chi R(z)\chi = \sum_{j=0}^\infty \sum_{|k|\leq K(j)}\chi A_{2j,k}\chi (\log z -a)^k z^{2j}$.  Then  using $(-\Delta-z^2)R(z)=I,$ 
expanding both sides in $z$ and $\log z$  and equating like powers yields 
\begin{align}\label{eq:tecons}
-\Delta A_{2j,k}& =A_{2j-2,k} \; \text{if $(2j,k)\not = (0,0)$}\nonumber \\
-\Delta A_{0,0}& =I 
\end{align}
where we understand $A_{2j,k}=0$ if $j<0$ or if $|k|\geq K(j)$.

\begin{proof}

If $\alpha_{00}=0$, then
$|(\log \lambda - \log z)\alpha(\lambda)| \to 0$ as $\lambda \to 0$ and, using the series for $\alpha$ from \eqref{e:dseries}, we have
\[
\frac 1 {1-(\log \lambda - \log z)\alpha(\lambda)} = \sum_{m=0}^\infty (\log \lambda - \log z)^m \alpha(\lambda)^m = \sum_{j=0}^\infty\sum_{k=0}^{2j} a_{jk} \lambda^{2j}(\log \lambda)^k.
\]
Inserting this,  and the series for $D$ and $\tilde F$ from \eqref{e:dseries} and \eqref{e:fseries}, into the resolvent formula \eqref{eq:smsing} gives the resolvent series expansion 
\begin{equation}\label{e:raexp}\chi R(\lambda) \chi= \chi\left( (u_A \otimes 1) K_1 D_{00}\log \lambda + A_{00} +\sum_{j=1}^\infty \sum_{k=0}^{2j+1} A_{2j,k}\lambda^{2j}(\log \lambda)^k\right)\chi.
\end{equation}
% with $A_{01}$ of rank at most $1$.   
% Writing 
%\begin{equation}\label{e:ri\lambdauv}
%\langle R(i|\lambda|) u, v \rangle - \langle u,  R(i|\lambda|) v \rangle=0, 
%\end{equation}
%for arbitrary $u,\; v\in L^2_c(\Omega)$ and  $|\lambda|>0$ small enough, plugging in \eqref{e:raexp} and extracting the coefficient of $\log( i|\lambda|)$ shows that 
%$$A_{01}=c_A u_A\otimes u_A$$ 
%for some $c_A\in \Real$.  By \eqref{eq:smsing}, we may take
%$u_A= -\frac{1}{2\pi} (1-\chi_1)+F_{00}D_{00}w$.  
Now by the formula for $F$ in \eqref{eq:F}, and the exponential decay estimates \eqref{e:r0exp} and \eqref{e:rexp}, we have
\begin{equation}\label{e:f00d00}
(F_{00}D_{00}w)(x)=(R_{00}K_1D_{00}w)(x)+O(e^{-|x|\im z}), 
\end{equation}
and since $\alpha_{00}=\int K_1D_{00}w=0$, 
by the asymptotics for $R_{00}$ in \eqref{e:r00xinf} we obtain 
\[u_{A}(x)= -\frac{1}{2\pi} (1-\chi_1(x)) +O(|x|^{-1}) \to - \frac 1 {2\pi}, \qquad \text{ as } |x| \to \infty.\]  Hence $u_{A}   \in \mathcal{D}_{\operatorname{loc}}$ is bounded and nontrivial.  By \eqref{eq:tecons}, we can conclude that $(\Delta u_A) \int K_1D_{00}f=0$ for all $f \in L^2_c(\Omega)$. Thus, by Lemma \ref{l:noresat0}, it is enough to show that there is $f \in L^2_c(\Omega)$ such that $\int K_1D_{00}f\ne0$. By \eqref{e:ikd}, $D_{00}\colon L^2_c(\Omega) \to L^2_c(\Omega)$ is invertible and hence it is enough to find  $f \in L^2_c(\Omega)$ such that $\int K_1f\ne0$.  By the resolvent identities \eqref{e:rlr0l} and \eqref{e:k1def} we have
\[
\int K_1 f = \int (-\Delta - z^2) (1-\chi_1)R(z) f = -z^2 \int(1-\chi_1)R(z)f,
\]
which is nonzero when $f = (-\Delta - z^2)g$ for $g \in H^2(\Omega)$ with $(1-\chi_1)g =g $ and $\int g \ne 0$.
\end{proof}

We are now ready to obtain the final form of the resolvent expansion stated in \eqref{e:resexp}.

\begin{proof}[Proof of Theorem \ref{t:res}] 
With the assumptions and notation of Lemma \ref{l:smsing}, and using the fact that $\alpha_{00} \ne 0$, we write 
\begin{equation}\label{e:1/1-r}
  \frac 1 {1-(\log \lambda - \log z)\alpha(\lambda)} =  \frac 1 {1-(\log \lambda - \log z)\alpha_{00}} \ \frac {1}{1- r(\lambda)},
\end{equation}
where
\[\begin{split}
r(\lambda)= \frac {(\log \lambda - \log z)(\alpha(\lambda)-\alpha_{00})}{1-(\log \lambda - \log z)\alpha_{00}} &= -\alpha_{00}^{-1} \left(1 - \frac{1}{1-(\log \lambda - \log z)\alpha_{00}} \right)(\alpha(\lambda)-\alpha_{00}).%\sum_{j=1}^\infty \sum_{k=0}^j \alpha_{jk}\lambda^{2j}(\log \lambda)^k.
\end{split}\]
Using the series for $\alpha$ from \eqref{e:dseries} to write, 
\[
 r(\lambda)^m = \sum_{j=m}^\infty \left(\sum_{k=0}^j  r_{m,2j,k} (\log \lambda)^k + \sum_{k=1}^m r_{m,2j,-k} (\log \lambda - \log z - \alpha_{00}^{-1})^{-k}\right)\lambda^{2j}, %\qquad \text{for }m \ge 1,
\]
and inserting the geometric series $\frac 1 {1-r(\lambda)} = \sum_{m=0}^\infty r(\lambda)^m$ into  \eqref{e:1/1-r}  gives
\begin{equation}\label{eq:invertscalar}
  \frac 1 {1-(\log \lambda - \log z)\alpha(\lambda)} = 
 \sum_{j=0}^\infty \left( \sum_{k=1}^{ j-1} b_{2j,k} (\log \lambda)^k + \sum_{k=0}^{j+1} b_{2j,-k} (\log \lambda - \log z - \alpha_{00}^{-1})^{-k} \right)\lambda^{2j}.
\end{equation}
Inserting this series and the series \eqref{e:dseries} and \eqref{e:fseries} for $D$ and $\tilde F$ into \eqref{eq:smsing} gives \eqref{e:resexp} with
$a=\log z +\alpha^{-1}_{00}$.  Moreover, this shows all the $B_{j,k}$ for $k\not =0$ have  finite rank, and $B_{0,-1}$ has rank at most one.
\end{proof}

We end this section with some computations concerning the resolvent expansion \eqref{e:resexp}. The first set of formulas \eqref{e:adefbis} is a restatement of \eqref{e:adef} from our main result.
The second set of formulas \eqref{e:b00b01dc2} will be used in Section \ref{s:smsp} to analyze the scattering matrix.

\begin{lem}\label{l:g}
There is a unique harmonic function $G$ in  $\mathcal D_{\text{loc}}$ such that $\log |x| - G(x)$ is bounded as $|x| \to \infty$, and 
\begin{equation}\label{e:adefbis}
 B_{0,-1} = \frac 1 {2\pi} G \otimes G, \quad  a = \log 2 - \gamma - C(\mathscr O) + \frac {\pi i }2,  \quad C(\mathscr O) := \lim_{|x|\to\infty} \log|x| - G(x).
\end{equation}
Also,
\begin{equation}\label{e:b00b01dc2}
 B_{00} \Delta \chi_2 = (1-\chi_2), \qquad B_{0,-1} \Delta \chi_2 =  G.
\end{equation}
\end{lem}

\begin{proof}
Inserting $R(i|\lambda|) = B_{00} + B_{0,-1}(\log |\lambda| - \log |z| - \alpha_{00}^{-1})^{-1} + O(\lambda^2\log \lambda)$ into $\langle R(i|\lambda|) u, v \rangle = \langle u,  R(i|\lambda|) v \rangle,$
and matching coefficients of
$(\log |\lambda| - \log |z|- \alpha_{00}^{-1})^{-1}$, gives 
\begin{equation}\label{e:b0-1pre}
 B_{0,-1} = c_B u_B \otimes u_B,
\end{equation}
for some $u_B \in \mathcal D_{\text{loc}}$ and $c_B \in \mathbb R$. From the term matching formula (\ref{eq:tecons}) we see that $-\Delta u_B=0$. 

We claim that we may choose $c_B$ and $u_B$ in such a way that $\log |x| - u_B(x)$ is bounded as $|x| \to \infty$. Then we may put $G=u_B$, as  uniqueness of $G$ follows from Lemma \ref{l:noresat0}.

To prove the claim, we use Vodev's identity in the form \eqref{eq:vodevmod}. Inserting the expansions for $D(\lambda)$, $F(\lambda)$, and $R(\lambda)$ from \eqref{e:dseries}, \eqref{e:fseries}, and \eqref{e:resexp} into \eqref{eq:vodevmod}, gives
\[\begin{split}
 (B_{00} + B_{0,-1}(\log \lambda - \log z - \alpha_{00}^{-1})^{-1})&(I + (\log z - \log \lambda)(w \otimes 1)K_1 D_{00}) \\ &= \Big(- \frac {\log \lambda} {2\pi} (1-\chi_1)(1\otimes 1) K_1 + F_{00}\Big)D_{00} +  O(\lambda^2 \log^2 \lambda).
\end{split}\]
Equating the coefficients of  $\log \lambda$ and $\lambda^0$, gives
\[
 B_{00} (w \otimes 1)K_1 D_{00} =  \frac 1 {2\pi}(1-\chi_1)(1 \otimes 1) K_1 D_{00},
\]
\[
 B_{00} + \Big(\log z\, B_{00} - B_{0,-1}  \Big)(w \otimes 1) K_1 D_{00}  = F_{00}D_{00}.
\]
The first equality implies 
\begin{equation}\label{e:b00w}
B_{00} w = \frac 1 {2\pi} (1-\chi_1). 
\end{equation}
Applying the second equality to $w$ and plugging in \eqref{e:b00w} and $\alpha_{00} = \int K_1 D_{00} w$ gives
\[
 \frac 1 {2\pi} (1-\chi_1) +  \frac {\alpha_{00}\log z } {2\pi} (1-\chi_1)  -   \alpha_{00} B_{0,-1} w = F_{00}D_{00}w,
\]
or
\begin{equation}\label{e:b0-1id}
B_{0,-1}w = - \alpha_{00}^{-1} F_{00}D_{00}w + \frac 1 {2\pi}(\alpha_{00}^{-1} + \log z) (1-\chi_1).  
\end{equation}
Using the expansion for $F_{00}D_{00}w$ in \eqref{e:f00d00} and the expansion for $R_{00}$ in \eqref{e:r00xinf} gives 
\begin{equation}\label{e:f00d00w}
 F_{00}D_{00} w(x) =  \Big(- \frac 1 {2\pi} \log|x| +  \frac{\log 2 - \gamma}{2\pi} + \frac i 4\Big) \alpha_{00}+ O(|x|^{-1}).
\end{equation}
%just the way we had \eqref{e:f00asy} in the case $\alpha_{00} = 0$, using the formula for $F_{00}$ \eqref{e:f00form} and the $R_{00}$ asymptotic \eqref{e:r00xinf}, taking account of the fact that now \eqref{e:r00k1} has additional terms of the form $ (- \frac 1 {2\pi} \log|x| +  \frac{\log 2 - \gamma}{2\pi} + \frac i 4) \alpha_{00}$ coming from \eqref{e:r00xinf} since $\alpha_{00} \ne 0$. 
Inserting \eqref{e:f00d00w} into \eqref{e:b0-1id}  and setting $u_B=2\pi B_{0,-1}w$ we get 
\begin{equation}\label{e:ubw}
 u_B(x) = 2\pi  B_{0,-1} w(x)=\log |x| +\alpha_{00}^{-1}+\log z -\log 2 +\gamma -\pi i/2 +O(|x|^{-1}).
\end{equation}
This completes the proof of the claim. 

Combining \eqref{e:ubw} and \eqref{eq:invertscalar} shows that the expression for $a$ in \eqref{e:adefbis} holds.
To complete the proof of \eqref{e:adefbis}, it remains to show that $c_B = 1/2\pi$. Inserting  \eqref{e:ubw} into \eqref{e:b0-1pre} gives $B_{0,-1}w = c_B G \langle w, G\rangle_{L^2} =  G/2\pi$, and so this is equivalent to showing that
\begin{equation}\label{e:wub1}
\langle w, G \rangle_{L^2} =1.
\end{equation}
%Write  $r=|x|$, and take $\rho$ such that the disk 
Let $D_\rho = \{x \in \mathbb R^2 \colon r < \rho\}$. For $\rho$ large, the asymptotics for  harmonic functions \eqref{eq:harmexp} imply
\[\begin{split}
 \langle w, G \rangle_{L^2} &= \frac 1 {2\pi} \int_{\Omega \cap D_\rho} (\Delta \chi_1) \overline{ G} dx = \frac 1 {2\pi} \int_{\partial D_\rho} \Big( (1-\chi_1)  \partial_r \overline{ G} - \partial_r(1-\chi_1) \overline{ G}\Big) dS \\
&= \frac 1 {2\pi} \int_{\partial D_\rho} (1-\chi_1)  \partial_r \overline{ G}  dS = \frac 1 {2\pi} \int_{\partial D_\rho} \Big( r^{-1} + O(r^{-2})\Big) dS = 1 + O(\rho^{-1}).
\end{split}\]
Taking $\rho \to \infty$ completes the proof of \eqref{e:wub1}, and hence of \eqref{e:adefbis}.

It remains to prove \eqref{e:b00b01dc2}. Observe that \eqref{e:b00w} and \eqref{e:ubw} are the same as \eqref{e:b00b01dc2}, but with $\chi_2$ replaced by $\chi_1$. But \eqref{e:b00w} and \eqref{e:ubw} hold for any $\chi_1$ which is $1$ near $\mathscr O$, so they  also hold with $\chi_1$ replaced by $\chi_2$, as desired.
\end{proof}

\section{The scattering matrix and scattering phase}\label{s:smsp}
The asymptotic expansions of the scattering matrix and scattering phase follow from the asymptotic expansion of the resolvent when combined with Petkov and Zworski's formula \eqref{e:pz} which expresses the scattering matrix in terms of the resolvent.

\begin{proof}[Proof of Theorem \ref{t:smat}]
We use the series
\begin{equation}\label{e:lamser}
 E(\lambda) = \sum_{\ell=0}^\infty  \lambda^\ell E_\ell \chi_3,
\end{equation}
where
\begin{equation}\label{eq:Es}
E_\ell f(\omega) :=  \frac {1}{\ell!} \int_{\mathbb R^2} (-i \omega \cdot x)^\ell f(x)dx =  \sum_{m=0}^\ell \frac {(-i)^\ell \omega_1^m \omega_2^{\ell-m} }{m!(\ell-m)!} \int_{\mathbb R^2} x_1^m x_2^{\ell-m}f(x)dx.
\end{equation}
Since a rank one operator $\varphi \otimes \psi \colon \mathcal H \to \mathcal H'$ has trace norm  $\|\varphi\|_{\mathcal H'}\|\psi\|_{\mathcal H}$, we obtain
\begin{equation}\label{e:echi3tr}
 \|E_\ell\chi_3 \|_{\text{tr}} \le  \sum_{m=0}^\ell \frac {\|\omega_1^m \omega_2^{\ell-m}\|_{L^2(\mathbb S^1)} \|x_1^m x_2^{\ell-m} \chi_3(x)\|_{L^2(\mathbb R^2)}}{m!(\ell-m)!}  \le \sum_{m=0}^\ell \frac {\sqrt{2} \pi \rho^{\ell+1}}{ m!(\ell-m)!} = \sqrt{2}\pi \frac{2^\ell \rho^{\ell +1}}{\ell!},
\end{equation}
where $\rho$ is chosen large enough that $|x| \le \rho$ on the support of $\chi_3$, and we used the fact that $\max|\chi_3|=1$.
Plugging in the series \eqref{e:resexp} for $R(\lambda)$, and the series \eqref{e:lamser} for $E(\lambda)$,
into Petkov and Zworski's formula for the scattering matrix \eqref{e:pz}, gives
\begin{equation}\label{e:aser}
 A(\lambda) = \frac {1} {4\pi i} \sum_{\ell=0}^\infty \sum_{\ell'=0}^\infty \sum_{j=0}^\infty \sum_{k=-j -1}^{j } E_{\ell}  [\Delta,\chi_1]B_{2j,k} [\Delta,\chi_2] E_{\ell'}^* \lambda^{2j+\ell+\ell'}(\log \lambda - a)^k .
\end{equation}
The series \eqref{e:aser} is absolutely convergent in the sense that
\[
 \sum_{\ell=0}^\infty \sum_{\ell'=0}^\infty \sum_{j=0}^\infty \sum_{k= -j -1}^{j } \|E_{\ell}  [\Delta,\chi_1]B_{2j,k} [\Delta,\chi_2] E_{\ell'}^*\|_{\text{tr}} |\lambda|^{2j+\ell+\ell'} |\log \lambda - a|^k,
\]
converges for  $\lambda$ on the positive imaginary axis with $|\lambda|$ small enough, because it is the product of the convergent series for $ E(\lambda)   \colon L^2(\mathbb R^2) \to L^2(\mathbb S^1)$, $ [\Delta,\chi_1] R(\lambda) \chi_3 \colon L^2(\Omega) \to L^2(\mathbb R^2)$, and $[\Delta,\chi_2] E(\overline{\lambda})^* \colon L^2(S^1) \to L^2(\Omega)$, with the series for $E(\lambda) $  being convergent in the trace norm by~\eqref{e:echi3tr}.

Hence, by Lemma \ref{l:appser}, the series \eqref{e:aser} is absolutely convergent in the space of trace-class operators $L^2(\mathbb S^1) \to L^2(\mathbb S^1)$, uniformly on sectors near zero.  Since the $E_\ell$  have finite rank, the $S_{j,k}$ also have finite rank.

To complete the proof of Theorem \ref{t:smat}, it remains to simplify the leading order terms in \eqref{e:aser}.

We first show that all terms of \eqref{e:aser} with $j=k=0$ and $\ell+ \ell' \le 1$ simplify to $0$. By \eqref{e:b00b01dc2}, we have $B_{0,0}[\Delta,\chi_2]  E_0^*=(1-\chi_2)\otimes 1$, and hence $ [\Delta,\chi_1]  B_{0,0}[\Delta,\chi_2]  E_0^*=0$. Hence all terms of \eqref{e:aser} with $j=k=\ell'=0$ simplify to $0$. To prove that the term with $j=k=\ell=0$ and $\ell'=1$ simplifies to $0$ too, we observe that since $B_{0,0}^* = B_{0,0}$, we have $E_0 [\Delta,\chi_1] B_{0,0} = 1 \otimes(1-\chi_1)$, and so
\[\begin{split}
  E_0  [\Delta,\chi_1]B_{0,0} [\Delta,\chi_2] E_1^* f &= 1 \otimes(1-\chi_1) [\Delta,\chi_2] E_1^* f  =  i\int  [\Delta,\chi_2] \int x \cdot \omega f(\omega)dS dx,\\
%&=- i\int  \int (\Delta \chi_2)   x \cdot \omega  + 2 \nabla \chi_2 \cdot \omega f(\omega)dS dx=0
\end{split}\]
while integrating by parts shows
\[
 \int [\Delta,\chi_2] x_j = \int (\Delta \chi_2)x_j + 2 \partial_{x_j} \chi_2 = 0.
\]

Second, we simplify the term with $j=\ell=\ell'=0$ and $k = -1$. By \eqref{e:b00b01dc2} we have
\[
  E_{0}  [\Delta,\chi_1]B_{0,-1} [\Delta,\chi_2] E_{0}^* = \Big(1 \otimes \chi_3(x)\Big)  [\Delta,\chi_1]\Big(G(x)\otimes 1\Big).
\]
Moreover, for $\rho$ large enough we have
\[- \int_{\mathbb R^2} [\Delta,\chi_1] G =  \int_{B(0,\rho)} \Delta(1-\chi_1) G =  \int_{r=\rho} \partial_r  G = 2\pi  + O(\rho^{-1}) \to  2\pi, \qquad \text{as } \rho \to \infty,\]
so we get
\[
\Big(1 \otimes \chi_3(x)\Big)  [\Delta,\chi_1]\Big(G(x)\otimes 1\Big) = -2 \pi (1 \otimes 1),
\]
which completes the calculation of $S_{0,-1}$. 
%To complete the proof, we note that \eqref{e:aser} implies that 
%$$S_{1,-1}= \frac{i}{4\pi}\Big( E_0 [\Delta,\chi_1]B_{0,-1} [\Delta,\chi_2]E^*_1+ E_1[\Delta,\chi_1]B_{0,-1} [\Delta,\chi_2]E^*_0 \Big)$$
%and use the fact that  ${\mathcal P}_m E_n=\delta_{mn}E_m$ when $m$ and $n$ are $0$ or $1$.
\end{proof}

We introduce the following notation to help with the proof of Theorem \ref{t:sphase}.  For $t\in [0,\infty)$ we denote by  $\{t\}$ the fractional part of $t$. %, so that $t=\lfloor t \rfloor  +\{t\}$.  
Thus, for $\ell \in \mathbb N_0$, $\{\ell/2\} = 0$ if $\ell$ is even and $\{\ell/2\} = 1/2$ if $\ell$ is odd.
We define two orthogonal projection operators acting on $L^2(\Sphere^1)$, $\pr_0$
and $\pr_{1/2}$.  The projection $\pr_0$ projects onto the span of $\{ e^{2 i n \theta},\ n\in \Integers\}$ and $\pr_{1/2}$ projects onto 
the span of $\{ e^{(2 n+1)i \theta},\ n\in \Integers\}$.   Hence $\pr_0\pr_{1/2}=0=\pr_{1/2}\pr_0$, and $\pr_0+\pr_{1/2}=I$.  
The following elementary lemma gives an indication of how we will use these projections.
\begin{lem}\label{l:oddtrace}
Let $m\in \Natural$.  For $j=1,...,m$, let $\ell_j, \; \ell'_j\in \Natural_0$ and $A_j:L^2(\Sphere^1)\to L^2(\Sphere^1)$ be trace class.  If 
$\sum_{j=1}^m(\ell_j+\ell_j')$ is odd, then 
$$\tr\left( \pr_{\{\ell_1/2\}}A_1\pr_{\{\ell_1'/2\}}\pr_{\{\ell_2/2\}}A_2\pr_{\{\ell_2'/2\}}\cdot \cdot \cdot \pr_{\{\ell_m/2\}}A_m\pr_{\{\ell_m'/2\}}\right)=0.$$
\end{lem}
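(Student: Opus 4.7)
The plan is to combine two simple ingredients: the orthogonality relations $\pr_0 \pr_{1/2} = \pr_{1/2} \pr_0 = 0$ (together with $\pr_a^2 = \pr_a$), and cyclicity of the trace. For notational convenience set $a_j = \{\ell_j/2\}$ and $b_j = \{\ell'_j/2\}$, each taking values in $\{0, 1/2\}$.

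First, using $\tr(XY) = \tr(YX)$, I would rewrite the trace in question as
\[
\tr\bigl(\pr_{b_m} \pr_{a_1} A_1 \pr_{b_1} \pr_{a_2} A_2 \pr_{b_2} \cdots \pr_{a_m} A_m\bigr).
\]
Since $\pr_a \pr_{a'} = 0$ when $a \neq a'$, this trace vanishes unless every adjacent pair of projections agrees, i.e.\ $b_j = a_{j+1}$ for $j = 1, \dots, m-1$ and (cyclically) $b_m = a_1$. Call this the \emph{matching condition}; from here on I assume it holds, otherwise the trace is already zero and there is nothing to prove.

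Second, I would translate the parity hypothesis. Observe that $\ell_j$ is odd if and only if $a_j = 1/2$, and similarly $\ell'_j$ is odd if and only if $b_j = 1/2$. Therefore the parity of $\sum_{j=1}^m (\ell_j + \ell'_j)$ coincides with the parity of $N := \#\{\, j : a_j \neq b_j\,\}$.

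Finally, here is the combinatorial punchline. View $(a_1, b_1, a_2, b_2, \dots, a_m, b_m)$ as a cyclic $\{0, 1/2\}$-valued sequence of length $2m$. Any such cyclic binary sequence has an even number of adjacent disagreements, since one returns to the starting value after going around the cycle. These disagreements split into two types: internal ones within a pair $(a_j, b_j)$, counted by $N$, and transitional ones between consecutive pairs $(b_j, a_{j+1})$. The matching condition forces the latter count to be zero, so $N$ itself is even. By the previous paragraph, this contradicts the standing hypothesis that $\sum_j (\ell_j + \ell'_j)$ is odd, and the trace must vanish. No step is particularly delicate; the whole argument is a parity count on cyclic binary sequences, with the projection algebra doing the only analytic work.
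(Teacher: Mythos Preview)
Your proof is correct and follows essentially the same approach as the paper: both combine the orthogonality $\pr_0\pr_{1/2}=\pr_{1/2}\pr_0=0$ with cyclicity of the trace and a parity argument on the cyclic sequence of projection indices. The paper states the contrapositive in one line (if the sum is odd then some adjacent pair $\pr_{\{\ell'_{j_0}/2\}}\pr_{\{\ell_{j_0+1}/2\}}$ or the cyclic pair $\pr_{\{\ell'_m/2\}}\pr_{\{\ell_1/2\}}$ must mismatch), while you spell out the same combinatorial fact as the even-disagreements property of a cyclic binary string; the content is identical.
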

\begin{proof}
Since $\sum_{j=1}^m(\ell_j+\ell_j')$ is odd, there must be at least one $j_0$ with $1\leq j_0<m$ so that $\ell'_{j_0}$ and $\ell_{j_0+1}$ have 
opposite parities, or $\ell_1$ and $\ell'_m$ must have opposite parities (or both).  By the cyclicity of the trace, for the purposes of computing the 
trace we can assume the latter holds.  But in this case since $\pr_{1/2}\pr_0=0=\pr_0\pr_{1/2}$, noting that for a trace class $T:L^2(\Sphere^1)\rightarrow L^2(\Sphere^1)$, $\tr( \pr_0 T \pr_{1/2})= \tr(  \pr_{1/2}\pr_0 T \pr_{1/2})=0$ and $\tr ( \pr_{1/2}T\pr_0)=\tr(\pr_0\pr_{1/2}T\pr_{0})=0$ proves the lemma.
\end{proof}

\begin{proof}[Proof of Theorem \ref{t:sphase}]
By the series \eqref{e:smatexp} for $S(\lambda) = I + A(\lambda)$ we see that $\|A(\lambda)\|_{\text{tr}} = O(1/\log \lambda)$, and so (by Theorem 1 of Section 4.3 of \cite{knopp} on substituting a convergent series into a power series) we have the following convergent series for the scattering phase:
 \begin{equation}\label{e:sigmasuma}
  \sigma(\lambda) = \frac 1 {2\pi i}\tr \log(I + A(\lambda)) =  \frac {-1}{2\pi i} \sum_{\ell=1}^\infty \frac {(-1)^\ell}{\ell} \tr A(\lambda)^\ell.
\end{equation}
Using the expression \eqref{eq:Es} for $E_\ell$, we see that 
\begin{equation}\label{eq:prE}
\pr_{\{\ell/2\}}E_{\ell}=E_\ell\; \text{ and } \;\pr_{\{(\ell+1)/2\}}E_{\ell}=0.
\end{equation}
In order to get an odd power of $\lambda$ in the expansion \eqref{e:aser} we see that exactly one of $\ell$ or $\ell'$ must be odd.  This implies
by \eqref{eq:prE} that the coefficients of $ \lambda^{2n+1}(\log \lambda -a)^k$ in the expansion of $A=S-I$ can be written in the form 
\begin{equation}\label{eq:Sodd}
S_{2n+1,k}=\pr_0 \tilde{S}_{2n+1,k,eo}\pr_{1/2}+\pr_{1/2} \tilde{S}_{2n+1,k,oe}\pr_{0}
\end{equation}
 for some trace class operators  $\tilde{S}_{2n+1,k,eo}$ and $\tilde{S}_{2n+1,k,oe}$.  Likewise, the coefficients of $ \lambda^{2n}(\log \lambda -a)^k$ in the expansion of $S$ can be written 
 \begin{equation}\label{eq:Seven}
 S_{2n,k}=\pr_0 \tilde{S}_{2n,k,ee}\pr_{0}+\pr_{1/2} \tilde{S}_{2n,k,oo}\pr_{1/2},\; \qquad \text{when } (n,k)\not =(0,0),
 \end{equation} for some trace class operators $S_{2n,k,ee}$ and $S_{2n,k,oo}$.

%By \eqref{eq:S1neg1},  $\tr  (S_{0,-1}^p S_{1,-1}S_{0,-1}^q)=0$ for any nonnegative integers $p$ and $q$. 
Using \eqref{eq:Sodd}, \eqref{eq:Seven} and Lemma \ref{l:oddtrace} yields
\[
 \tr A(\lambda) = \frac {i \pi} {\log \lambda -a} + \sum_{j=1}^\infty  \sum_{k=-j-1}^j
 %{k=-\lfloor j/2 \rfloor-1}^{\lfloor j/2 \rfloor} 
   \tr S_{2j,k}  (\log \lambda -a)^{k}\lambda^{2j}. 
\]
Note there are no odd powers of $\lambda$.
Moreover, 
 there are constants $a_{j,k,l}$ such that 
\begin{equation}\label{e:tral}\begin{split}
 \tr A(\lambda)^\ell % &= \tr \Big(\frac {(1 \otimes 1)(i/2)} {\log \lambda -a} + \frac{S_{1,-1} \ \lambda}{\log \lambda -a}  +  \sum_{j=2}^\infty  \sum_{k=\lfloor j/2 \rfloor-1}^{\lfloor j/2 \rfloor}    S_{j,k}  (\log \lambda -a)^{k}\lambda^{j}\Big)^\ell \\
&= \Big( \frac {i\pi} {\log \lambda -a} \Big)^\ell  +  \sum_{j=1}^\infty  \sum_{k=-j-\ell}^j
%{k=-\lfloor j/2 \rfloor-\ell}^{\lfloor j/2 \rfloor}    
a_{j,k,\ell}  (\log \lambda -a)^{k}\lambda^{2j}.
\end{split}\end{equation}
Again, the absence of odd powers of $\lambda$ comes from applying Lemma \ref{l:oddtrace}, \eqref{eq:Sodd} and \eqref{eq:Seven}.
Substituting \eqref{e:tral} into \eqref{e:sigmasuma} gives
\begin{equation}\label{e:sigmasumsum}
 \sigma(\lambda) = \frac {-1}{2\pi i} \sum_{\ell=1}^\infty \frac {(-1)^\ell}{\ell} \Bigg(\Big( \frac {i\pi} {\log \lambda -a} \Big)^\ell +   \sum_{j=1}^\infty  \sum_{k=-j-\ell}^j
 %_{k=-\lfloor j/2 \rfloor -\ell}^{\lfloor j/2 \rfloor}    
 a_{j,k,\ell}  (\log \lambda -a)^{k}\lambda^{2j} \Bigg),
\end{equation}
and combining with
\[
\sum_{\ell=1}^\infty \frac {(-1)^\ell}{\ell} \Big( \frac {i\pi} {\log \lambda -a} \Big)^\ell  = - \log\Big(1 + \frac {i\pi} {\log \lambda -a}\Big),
\]
we obtain \eqref{e:sigmasum}. Then \eqref{e:sigma1st} follows from \eqref{e:sigmasum} together with
\begin{equation}\label{e:lnargarctan}
 \log\Big(1 + \frac {i\pi} {\log \lambda -a}\Big) = i  \arg\Big(1 + \frac {i\pi} {\log \lambda -a}\Big) = 2 i \arctan\Big(\frac{\pi} {2\log (\lambda/2) + 2C(\mathscr O) + 2\gamma }\Big).
\end{equation}
Absolute convergence of the series for $\sigma(\lambda)$ and $\sigma'(\lambda)$, uniformly on sectors near zero, again follows from Lemma~\ref{l:appser}. Differentiating \eqref{e:lnargarctan}  gives \eqref{e:sigma'1st}.
\end{proof}

\section{The Dirichlet-to-Neumann operator}\label{s:dtn}

\noindent In this section we show that the Dirichlet-to-Neumann operator for the exterior Helmholtz equation 
\begin{equation}\label{eq:udef}\begin{split}
(-\Delta -\lambda^2)u=0, \qquad   &\text{in } \Omega, \\
u=f, \qquad &\text{on } \partial \Omega,
\end{split}\end{equation}
 has an expansion near $\lambda = 0$ very much like that of the resolvent in Theorem \ref{t:res}.  In fact, the expansion follows easily from our Theorem \ref{t:res}.  We 
use this to answer  a question raised by D. Grebenkov regarding the lowest eigenvalue of the Dirichlet-to-Neumann operator near~$\lambda = 0$ \cite[page 11]{gr22}. See \cite[Chapter 4]{mclean}, \cite[Chapter 3]{coltonkress}, and  \cite[Chapters 7 and 9]{taylor} for textbook introductions to the Dirichlet-to-Neumann operator. 
The papers \cite{ArEl,bsw} contain results on exterior Dirichlet-to-Neumann operators and some references to further results on the subject.

In this section for simplicity we assume that $\partial \mathscr{O} = \partial\Omega$ is smooth, without boundary, and  $\Omega = \Real^2\setminus \mathscr O$ is connected.
%We begin with the operator $-\Delta +\kappa^2$ on $\Omega$, so that in our earlier notation $\lambda =i\kappa$.  We will comment below about this choice.

 %Let $\kappa>0$, 
 Let $\im \lambda>0$, and define $\dtn(\lambda)$, the Dirichlet-to-Neumann operator on
$\Omega$,
 to be the operator that maps $H^{1/2}(\partial \Omega)\ni f \mapsto \partial_\nu u \in H^{-1/2}(\partial \Omega)$
where $u\in L^2(\Omega)$ satisfies \eqref{eq:udef} and $\partial_\nu$ is the outward (with respect to $\Omega$) pointing unit normal.  Here we use 
an extended notion of the normal derivative as described in \cite[ Lemma 4.3]{mclean}. Then $\dtn(\lambda)$ % initially defined for 
%$\lambda \in i(0,\infty)$
%$\im \lambda>0$, 
has a meromorphic continuation to $\Lambda$. We prove this well-known fact in the course of the proof of our next theorem, which is the analog of Theorems \ref{t:res}, \ref{t:smat}, and \ref{t:sphase} for the Dirichlet-to-Neumann operator.
%By considering the operator $-\Delta +\kappa^2$ with $\kappa>0$ 
 %$u$ is uniquely determined by requiring \eqref {eq:udef}
 %and that $u\in L^2(\Omega)$--compare e.g. \cite{bsw} in which the authors consider the operator 
% $-\Delta -\lambda^2$ with $\lambda>0$ and use the Sommerfeld radiation condition to ensure uniqueness.
%Moreover, for $\kappa>0$ the Dirichlet-to-Neumann  operator  $\dtn(i\kappa)$ is a non-negative self-adjoint operator.  It has a continuous extension to $\kappa =0$, and 
%in fact a meromorphic extension to $\Lambda$, as we will show.  

%In our theorem we return to the $\lambda$ notation, in order to be consistent with our earlier expansions.
%The operators $D_{2j,k}$ appearing in this theorem are not the same as the operators in \eqref{e:dseries}.
\begin{thm} \label{t:dtn} Let $\partial \Omega$ be smooth and $\Omega$ be connected. 
  There are operators $\T_{2j,k}: H^{1/2}(\partial \Omega)
\rightarrow H^{-1/2} (\partial \Omega)$ such that % the Dirichlet-to-Neumann operator can be expanded near $0$
\begin{equation}\label{e:dtnexp}\dtn(\lambda )= \sum_{j=0}^\infty   \sum_{k=-j-1}^{j}  \T_{2j,k}  \lambda^{2j} (\log \lambda  -a)^k,
\end{equation}
with $a$ as in \eqref{e:adef}, and with the series converging absolutely in the space of bounded operators $ H^{1/2}(\partial \Omega)
\rightarrow H^{-1/2} (\partial \Omega)$,  uniformly on sectors near zero.
\end{thm}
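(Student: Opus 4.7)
The plan is to express $\dtn(\lambda)$ in terms of the Dirichlet resolvent and substitute the expansion \eqref{e:resexp}. Fix a bounded extension operator $\ext \colon H^{1/2}(\partial \Omega) \to H^1_c(\mathbb R^2)$ with $\ext f$ supported in a fixed compact neighborhood of $\partial \Omega$. For $\im \lambda > 0$, the unique outgoing solution of the boundary value problem \eqref{eq:udef} is
\[u_\lambda = \ext f - R(\lambda)(-\Delta - \lambda^2)\ext f,\]
where $R(\lambda)$ is extended to $H^{-1}_c(\Omega) \to H^1_{\loc}(\Omega)$ by duality from $R(\lambda) = R(\bar\lambda)^*$. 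Applying the generalized normal derivative $\gamma_1$ of Lemma~4.3 of \cite{mclean} gives
\[\dtn(\lambda) f = \gamma_1 \ext f - \gamma_1 R(\lambda)[(-\Delta)\ext f] + \lambda^2 \gamma_1 R(\lambda)\ext f.\]

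Next, I would choose $\chi \in C_0^\infty(\mathbb R^2)$ equal to $1$ on $\operatorname{supp}(\ext f) \cup \mathscr O$, so that each $R(\lambda)$ in the previous display may be replaced by $\chi R(\lambda) \chi$ without affecting values on $\partial \Omega$. Substituting the resolvent expansion \eqref{e:resexp} produces two double series in $\lambda^{2j}(\log \lambda - a)^k$: the $(-\Delta)\ext f$ term contributes at exponents $(2j, k)$ with $-j-1 \le k \le j$, while the $-\lambda^2 \ext f$ term contributes the shifted exponents $(2j+2, k)$. Collecting and reindexing yields operators $\T_{2j,k} \colon H^{1/2}(\partial \Omega) \to H^{-1/2}(\partial \Omega)$ with the indices in the stated range. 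Absolute convergence of the resulting series in the bounded-operator norm $H^{1/2}(\partial \Omega) \to H^{-1/2}(\partial \Omega)$, uniformly on sectors near zero, follows from the corresponding convergence of \eqref{e:resexp} in $L^2(\Omega) \to \mathcal D$, combined with continuity of $\ext$, of multiplication by $\Delta \chi$, and of $\gamma_1$; the mapping properties from \cite{gmz} supply the bookkeeping in the boundary Sobolev scale. The meromorphic continuation of $\dtn(\lambda)$ from $\{\im \lambda > 0\}$ to $\Lambda$ is then automatic from the continuation of $R(\lambda)$ established in Section~\ref{s:vod}.

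The principal obstacle is the functional-analytic step of applying $R(\lambda)$ and its expansion \eqref{e:resexp} to inputs of only $H^{-1}_c$ regularity: when $f \in H^{1/2}(\partial \Omega)$, one has $\ext f \in H^1_c$ but $(-\Delta)\ext f$ is only in $H^{-1}_c$, so each $B_{2j,k}$ must be shown to extend boundedly from $H^{-1}_c(\Omega)$ to $H^1_{\loc}(\Omega)$ compatibly with the absolute convergence estimates. A cleaner route, which I would likely adopt, is to derive the formula first for $f \in H^{3/2}(\partial \Omega)$---so that $\ext f \in H^2_c$ and $(-\Delta - \lambda^2) \ext f \in L^2_c$, and Theorem~\ref{t:res} applies directly---then extend by density after separately verifying that each $\T_{2j,k}$ is bounded $H^{1/2} \to H^{-1/2}$. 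That verification uses the explicit structure of the leading coefficients (for instance \eqref{e:b00b01dc2} gives $B_{0,0}\Delta\chi_2 = 1-\chi_2$ and $B_{0,-1}\Delta\chi_2 = G$, each producing a rank-one or smoothing contribution to $\T_{0,k}$) together with boundary regularity from \cite{mclean} and the mapping properties of \cite{gmz}.
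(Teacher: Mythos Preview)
Your approach is essentially the paper's \emph{second} proof: extend $f$ to $H^1_c$, write $u_\lambda=\ext f-R(\lambda)(-\Delta-\lambda^2)\ext f$, and feed the resolvent expansion in. You correctly identify the main obstacle, namely that $(-\Delta)\ext f$ is only in $H^{-1}_c$, so one must upgrade \eqref{e:resexp} to hold as a map $H^{-1}_c(\Omega)\to (H^1_0(\Omega))_{\loc}$. The paper's second proof handles this not by duality but by Lax--Milgram (giving $R(z)\colon H^{-1}\to H^1_0$ directly for $\im z>0$) and then by revisiting the proof of Theorem~\ref{t:res} to check that each $F_{2j,k}$, $D_{2j,k}$, and hence $B_{2j,k}$, has the required $H^{-1}_c\to H^1_{\loc}$ mapping property. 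Your one-line ``$R(\lambda)=R(\bar\lambda)^*$'' does not quite do this: duality of $R(\bar\lambda)\colon L^2\to H^1_0$ yields an operator $H^{-1}\to L^2$, not $H^{-1}\to H^1$, and in any case does not by itself transfer the convergence estimates of \eqref{e:resexp}.

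The paper's \emph{first} proof sidesteps the whole issue, and you may find it cleaner. Instead of an arbitrary $H^1$ extension, take $\mathcal U f$ to be the \emph{harmonic} extension of $f$ to the annular region $\Omega\cap D_\rho$ with zero data on $\partial D_\rho$. Then with $\chi\in C_0^\infty(D_\rho)$ equal to $1$ near $\overline{\mathscr O}$,
\[
(-\Delta-\lambda^2)(\chi\,\mathcal U f)=-[\Delta,\chi]\mathcal U f-\lambda^2\chi\,\mathcal U f,
\]
since $\Delta\mathcal U f=0$. The commutator $[\Delta,\chi]$ is first order and supported away from $\partial\Omega$, so the right-hand side lies in $L^2_c(\Omega)$ for $f\in H^{1/2}(\partial\Omega)$, and Theorem~\ref{t:res} applies directly with no $H^{-1}$ extension needed. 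This also yields the extra regularity $T_{2j,k}\colon H^{1/2}\to H^{1/2}$ for $(j,k)\ne(0,0)$. Your density route via $f\in H^{3/2}$ would work too, but is more laborious than simply choosing the extension to be harmonic.
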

In fact, our first proof shows that if $(j,k)\not = (0,0)$, then $T_{2j,k}:H^{1/2}(\partial \Omega) \rightarrow H^{1/2}(\partial \Omega) $.

We remark that for $\lambda >0$ the function $u$ used in the definition of the Dirichlet-to-Neumann operator can be uniquely determined by requiring it to satisfy a Sommerfeld radiation condition rather than be in $L^2(\Omega)$; this is done in \cite{bsw}, for example.

\begin{proof}
We begin by describing a construction of the unique
$u=u(x,\lambda)\in L^2(\Omega)$ satisfying \eqref{eq:udef}
%\begin{align*}
%(-\Delta -\lambda^2)u(x)&=0, \;  x\in \Omega \nonumber \\
%u\restrict_{\partial \Omega}&=f
%\end{align*}
 for $\im \lambda >0$ and $f \in H^{1/2}(\partial \Omega)$.   Choose
 %\footnote{Would you be in favor of changing $B_R(0)$ to $D_\rho$ in Section 4, so it matches Section 2? Or would you rather we changed all the $D_\rho$'s to $B_R(0)$? I'm happy to do the change in either case.  Done} 
 $\rho>0$ so that $\mathscr{O}\subset D_\rho= \{x\in \Real^2:\; |x|<\rho\}$.  By 
 \cite[Theorem 4.10]{mclean} there is a unique $\tilde{u}\in H^1(\Omega \cap D_\rho)$ satisfying
 %\footnote{Is it necessary to require $ (-\Delta +1)\tilde{u}=0$ here, or could we just require  $ -\Delta \tilde{u}=0$? The latter is fine, change made}
 \begin{align*}
 -\Delta \tilde{u}=0\; & \text{on} \; \Omega\cap D_\rho\\
 \tilde{u}\restrict_{\partial \Omega}=f\\
 \tilde{u}\restrict_{\partial D_\rho}=0.
 \end{align*}
 Denote by $\mathcal{U} $ the mapping $f\mapsto \tilde{u}$; by \cite[Theorem 4.10]{mclean} this is a continuous map $\mathcal{U}: H^{1/2}(\partial \Omega)\rightarrow H^{1}(\Omega \cap D_\rho)$.  Choose $\chi \in C_0^\infty(D_\rho)$ so that $\chi$ is one in a neighborhood of $\overline{\mathscr{O}}$, and set 
 \begin{equation}\label{eq:u}
 u=\chi \mathcal{U}f- R(\lambda)(-\Delta-\lambda^2) (\chi \mathcal{U}f) = \chi \mathcal{U}f+R(\lambda)([\Delta,\chi] \mathcal{U}f+\lambda^2\chi\mathcal{U}f) .
 \end{equation}
 Since $([\Delta,\chi] \mathcal{U}f+\lambda^2 \chi\mathcal{U}f)\in L^2_c(\Omega)$, $R(\lambda)([\Delta,\chi] \mathcal{U}f+\lambda^2\chi\mathcal{U}f)$
 has a meromorphic continuation (as an element of $H^2_{\loc}(\Omega)$) to $\Lambda$, as does 
 $\partial_\nu R(\lambda)([\Delta,\chi] \mathcal{U}f+\lambda^2\chi\mathcal{U}f)\in H^{1/2}(\partial \Omega)$.  This shows $\dtn(\lambda)$ has a meromorphic
 extension to $\Lambda$.  Moreover, the expansion \eqref{e:dtnexp} for $\dtn(\lambda)$ follows from the expansion \eqref{e:resexp} for $R(\lambda)$
 and the expression (\ref{eq:u}).
\end{proof}

We sketch an alternate proof of Theorem \ref{t:dtn}.  This second proof is more similar in approach to our proof of Proposition \ref{p:dtnev}.  Moreover,
it addresses the mapping properties of $R(\lambda)$ and $B_{2j,k}$ acting on distributions with less regularity than $L^2$, which is of independent interest.
\begin{proof}
Let $\ext:H^{1/2}(\partial \Omega)\rightarrow H^1_{\loc}(\Omega)$ be an extension operator, so that
$\ext f \restrict_{\partial \Omega}= f$: see  \cite[Theorem 1.5.1.2]{grisvard}.   This extension is not uniquely determined, but any such extension will do.  
 Let $\chi\in C_0^\infty(\Real^2)$ be $1$ in a neighborhood of $\Omega$.   Then $(-\Delta -\lambda^2) (\chi \ext f)\in H^{-1}(\Omega):=(H^1_0(\Omega))^*$ and has 
 compact support.  
 
 For $\im \lambda>0$ the function $u= \chi \ext f+R(\lambda)(\Delta+\lambda^2)\chi \ext f$ satisfies \eqref{eq:udef} and $u\in L^2(\Omega)$.  This is straightforward if $f\in H^{3/2}(\Omega)$, but we need
 some argument that this also holds for $f\in H^{1/2}(\partial \Omega)$.  In addition, we will need that the expansion \eqref{e:resexp}  holds as a map
 $H^{-1}_c(\Omega)\rightarrow (H^1_0(\Omega))_{\loc}$.
 Suppose $-i\im z >0$ and $g\in H^{-1}(\Omega)$.  We consider a weak formulation of  the problem $(-\Delta -z^2)v=g$ via a bilinear form.
 By the Riesz representation theorem (\cite[Theorem 2.28]{borthwick} or
 the Lax-Milgram theorem
 %\footnote{Could this also be a reference to the Riesz representation theorem \cite[Theorem 2.28]{borthwick}?} 
 (e.g. \cite[Section 6.2.1]{evans}) there is a unique $v\in H^1_0(\Omega)$ so that for all $w\in H^1_0(\Omega)$,  
 $\langle \nabla v,\nabla w\rangle -z^2 \langle v,w\rangle 
 = \langle g, w\rangle$, where the last pairing is the dual pairing of $H^{-1}(\Omega)$ and $H^1_0(\Omega)$.   Moreover, $v$ depends continuously on $g$.
 Hence $R(z): H^{-1}(\Omega)\rightarrow H^1_0(\Omega)$ continuously when $-iz>0$ (with the norm depending on $z$).
    Now we use the notation of the proof of Theorem \ref{t:res}.
 Since $R_0(z)$ commutes with the Laplacian, for $\im z>0$ $R_0(z):H^{-1}(\Real^2)\rightarrow H^1(\Real^2)$ and with $R_{2j,k}$ from \eqref{e:r0series},
 $R_{2j,k}:H_c^{-1}(\Real^2)\rightarrow H^1_{\loc}(\Real^2).$
  This and the mapping properties
 of $R(z)$ show 
 that $F_{2j,k}: H^{-1}_c(\Omega)\rightarrow (H^1_0(\Omega))_{\loc}$.  Then since $ D_{2j,k}: H^{-1}_c(\Omega)\rightarrow  H^{-1}_c(\Omega)$, inspection of our proof of Theorem \ref{t:res} shows that $B_{2j,k}: H^{-1}_c(\Omega)\rightarrow ( H^1_0(\Omega))_{\loc}$.
 
 Thus we have, for $\im \lambda>0$
 \begin{equation}
 \label{eq:utwo}
 u = \chi \ext f + R(\lambda) (\Delta +\lambda^2) (\chi \ext f)\in H^1(\Omega).
 \end{equation} Therefore $u$ has a meromorphic continuation to $\Lambda$ and has an expansion near $0$ in powers of $\lambda$ and $(\log \lambda -a)$
 just as the resolvent does,  \eqref{e:resexp}.  Hence  $\partial_\nu u=\dtn (\lambda)u$ has a meromorphic continuation to $\Lambda$ and  the expansion \eqref{e:dtnexp} for $\dtn(\lambda)$ follows from the expansion \eqref{e:resexp} for $R(\lambda)$ and \eqref{eq:utwo}.
 \end{proof}

%We remark that for $\lambda >0$ the function $u$ used in the definition of the Dirichlet-to-Neumann operator can be uniquely determined by requiring it to satisfy a Sommerfeld radiation condition rather than be in $L^2(\Omega)$; this is done in \cite{bsw}, for example.

For the convenience of the reader, we include brief proofs of two variational formulas for eigenvalues, which we shall use below. The first, \eqref{eq:hv1}, is known as Hadamard's variational formula and as the Feynman--Hellmann Theorem (see \cite[Theorem 1.4.7]{simon}). Both \eqref{eq:hv1} and \eqref{eq:hv2} are essentially special cases of equation (2.36) of Chapter II of \cite{kato}, which generalizes well-known perturbation theory formulas from quantum mechanics as in equations (7.9) and (7.15) of \cite{griffiths}.

\begin{lem}[Variational formulas]
Let $\mathcal{H}$ be a 
%separable 
Hilbert 
space with inner product $\langle \bullet, \bullet \rangle_{\mch}$, and let $A(\tau):\mathcal{H}\rightarrow \mathcal{H}$ be a (possibly unbounded) self-adjoint linear operator depending
in a $C^1$ fashion  on 
$\tau\in  (\alpha,\beta)\subset \Real$.   Let $\phi_1(\tau)$ be an eigenfunction of $A(\tau)$, which has
$\| \phi_1(\tau)\|_{\mch}=1$ and which depends in a $C^1$ fashion on $\tau\in (\alpha, \beta)$.  Suppose $A(\tau)\phi_1(\tau)=\sigma_1(\tau)\phi_1(\tau)$.  Then $\sigma_1\in C^1(\alpha,\beta), $ and
\begin{equation}\label{eq:hv1}\partial_\tau \sigma_1=\langle (\partial _\tau A)\phi_1,\phi_1\rangle _{\mch}.
\end{equation}
For the second variational formula, suppose in addition that $A(\tau)$ and $\phi_1(\tau)$ are $C^2$, that $\mch$ is separable, and that for $\tau\in (\alpha, \beta)$
 $A(\tau)$ has a complete orthonormal set of 
eigenfunctions $\{\phi_j\}_{j=1}^\infty=\{\phi_j(\tau)\}_{j=1}^\infty$  with $A(\tau) \phi_j(\tau)=\sigma_j(\tau)\phi_j(\tau)$.  Moreover, assume that $\sigma_j\not = \sigma_1$ if $j\not =1$.
%Suppose $\phi_1\in C^2((\alpha,\beta); \mathcal{H}_1)$, which implies $\sigma_1\in C^2((\alpha, \beta))$.  
Then $\sigma_1\in C^2(\alpha,\beta)$, and 
%\footnote{Isn't taking the real part in \eqref{eq:hv2} superfluous?}
\begin{equation}\label{eq:hv2}
\partial_\tau^2 \sigma_{1}= \langle (\partial_\tau^2  A )\phi_1, \phi_1\rangle_{\mch}+ 2 \sum_{j\not = 1} \frac{1}{\sigma_1-\sigma_j}\left| \langle (\partial_\tau A )\phi_1,\phi_j \rangle_{\mch}\right|^2.
\end{equation}
\end{lem}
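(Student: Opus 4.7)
The plan is to derive both formulas directly from the eigenvalue equation $A(\tau)\phi_1(\tau) = \sigma_1(\tau)\phi_1(\tau)$ using nothing beyond self-adjointness and the normalization $\|\phi_1(\tau)\|_{\mch} = 1$. For \eqref{eq:hv1}, I would start from the identity $\sigma_1(\tau) = \langle A(\tau)\phi_1(\tau), \phi_1(\tau)\rangle_{\mch}$ (obtained by pairing the eigenvalue equation with $\phi_1$), differentiate in $\tau$, and observe that the two cross terms collapse via
\[
\langle A\, \partial_\tau\phi_1, \phi_1\rangle_{\mch} + \langle A\phi_1, \partial_\tau \phi_1\rangle_{\mch} = \sigma_1\bigl(\langle \partial_\tau \phi_1, \phi_1\rangle_{\mch} + \langle \phi_1, \partial_\tau \phi_1\rangle_{\mch}\bigr) = \sigma_1 \, \partial_\tau \|\phi_1\|_{\mch}^2 = 0,
\]
where the first equality uses self-adjointness of $A$ and the eigenvalue equation. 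This immediately yields \eqref{eq:hv1}, and also shows $\sigma_1 \in C^1$.

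For \eqref{eq:hv2}, my strategy has two ingredients. First, I would differentiate $A\phi_1 = \sigma_1 \phi_1$ once and pair with $\phi_j$ for $j \neq 1$, using self-adjointness $\langle A\,\partial_\tau\phi_1, \phi_j\rangle_{\mch} = \sigma_j \langle \partial_\tau \phi_1, \phi_j\rangle_{\mch}$ and $\langle \phi_1, \phi_j\rangle_{\mch} = 0$, to obtain the standard first-order perturbation formula
\[
c_j(\tau) := \langle \partial_\tau \phi_1, \phi_j\rangle_{\mch} = \frac{\langle (\partial_\tau A)\phi_1, \phi_j\rangle_{\mch}}{\sigma_1 - \sigma_j}, \qquad j \neq 1,
\]
which is well-defined by the assumption $\sigma_j \neq \sigma_1$. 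Second, I would differentiate the eigenvalue equation twice and pair with $\phi_1$. The term $\langle A\,\partial_\tau^2\phi_1, \phi_1\rangle_{\mch}$ cancels $\sigma_1\langle \partial_\tau^2\phi_1, \phi_1\rangle_{\mch}$ by self-adjointness, leaving
\[
\partial_\tau^2 \sigma_1 = \langle (\partial_\tau^2 A)\phi_1, \phi_1\rangle_{\mch} + 2\langle (\partial_\tau A)\, \partial_\tau\phi_1, \phi_1\rangle_{\mch} - 2(\partial_\tau\sigma_1) \langle \partial_\tau\phi_1, \phi_1\rangle_{\mch}.
\]
Expanding $\partial_\tau \phi_1 = \sum_j c_j \phi_j$ in the second term and using self-adjointness of $\partial_\tau A$, the $j=1$ contribution $2 c_1 \langle (\partial_\tau A)\phi_1, \phi_1\rangle_{\mch} = 2 c_1 \, \partial_\tau \sigma_1$ exactly cancels the last term (since $\langle \partial_\tau\phi_1, \phi_1\rangle_{\mch} = c_1$), while the $j \neq 1$ contributions combine with the formula for $c_j$ above to produce $2\sum_{j\neq 1}|\langle (\partial_\tau A)\phi_1, \phi_j\rangle_{\mch}|^2/(\sigma_1 - \sigma_j)$. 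This gives \eqref{eq:hv2}.

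The routine cancellations are all forced by symmetry, so the main point requiring care is convergence of the Fourier-type expansion $\partial_\tau\phi_1 = \sum_j c_j \phi_j$ in $\mch$, which holds by completeness of $\{\phi_j\}$ and the fact that $\partial_\tau\phi_1 \in \mch$ by hypothesis, and the absolute convergence of $\sum_{j\neq 1}|\langle (\partial_\tau A)\phi_1, \phi_j\rangle_{\mch}|^2/(\sigma_1-\sigma_j)$, which follows from Bessel's inequality applied to $(\partial_\tau A)\phi_1 - (\partial_\tau\sigma_1)\phi_1 = (A - \sigma_1)(-\partial_\tau \phi_1)$ together with the rearrangement $c_j(\sigma_1-\sigma_j) = \langle (\partial_\tau A)\phi_1, \phi_j\rangle_{\mch}$ from the first ingredient. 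The $C^2$ regularity of $\sigma_1$ follows from the $C^2$ regularity hypotheses on $A$ and $\phi_1$ combined with the identity just derived.
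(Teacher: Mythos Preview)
Your proposal is correct and follows essentially the same route as the paper. The only cosmetic difference is in the second formula: the paper differentiates the already-established identity $\partial_\tau\sigma_1 = \langle(\partial_\tau A)\phi_1,\phi_1\rangle_{\mch}$ once (obtaining $\partial_\tau^2\sigma_1 = \langle(\partial_\tau^2 A)\phi_1,\phi_1\rangle_{\mch} + 2\re\langle(\partial_\tau A)\phi_1,\partial_\tau\phi_1\rangle_{\mch}$, where the $j=1$ contribution vanishes automatically via $\re c_1=0$), whereas you differentiate the eigenvalue equation twice and then cancel the $j=1$ term against $-2(\partial_\tau\sigma_1)c_1$ explicitly; the subsequent use of \eqref{eq:ip1}-type identities and the orthonormal expansion of $\partial_\tau\phi_1$ is identical.
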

\begin{proof}
 Since $\sigma_1=\langle \phi_1,\phi_1\rangle$ and the right hand side is differentiable on $(\alpha,\beta)$, so is $\sigma_1$.
Then
$$\partial_\tau \sigma_1=\partial_\tau \langle   A \phi_1, \phi_1\rangle_{\mch}= \langle (\partial _\tau A)\phi_1,\phi_1\rangle_{\mch} + \langle  A \partial_\tau \phi_1,\phi_1\rangle_{\mch} + \langle A\phi_1,\partial_\tau \phi_1\rangle_{\mch}.$$
By the self-adjointness of $A$, this is 
\begin{align}
\partial_\tau \sigma_1& =  \langle (\partial _\tau A)\phi_1,\phi_1\rangle _{\mch}+ \langle   \partial_\tau \phi_1,A\phi_1\rangle_{\mch} +\langle  A\phi_1,\partial_\tau \phi_1\rangle_{\mch}\nonumber \\ & =
\langle (\partial _\tau A)\phi_1,\phi_1\rangle _{\mch}+ \sigma_1\left( \langle   \partial_\tau \phi_1,\phi_1\rangle_{\mch} +\langle  \phi_1,\partial_\tau \phi_1\rangle_{\mch}\right)
= \langle (\partial _\tau A)\phi_1,\phi_1\rangle _{\mch}
\end{align}
since $\partial_{\tau}\langle \phi_1,\phi_1 \rangle_{\mch}=0$.  This yields the first variational formula, \eqref{eq:hv1}.

For the second variational formula 
%assume in addition that $\sigma_j\not = \sigma_1$ if $j\not =1$. 
we use 
$$(\partial_\tau A)\phi_1+ A\partial_\tau \phi_1 = \partial_\tau \sigma_1 \phi_1 + \sigma_1 \partial_\tau \phi_1.$$
Taking the inner product with $\phi_j$ when $j\not = 1$ yields, after some simplification and rearrangement,
\begin{equation}\label{eq:ip1}
 \langle \partial_\tau \phi_1,\phi_j\rangle_{\mch}=\frac{1}{\sigma_1-\sigma_j}\langle (\partial_\tau A)\phi_1,\phi_j\rangle_{\mch}.
 \end{equation}
Differentiate \eqref{eq:hv1} to get 
$$\partial_\tau^2 \sigma_1= \langle (\partial_\tau^2   A) \phi_1, \phi_1\rangle_{\mch}+ 2\re  \langle( \partial_\tau A )\phi_1,\partial_\tau \phi_1\rangle_{\mch}. $$
Now use \eqref{eq:ip1}  and $\re\langle \partial_\tau \phi_1, \phi_1 \rangle_{\mch}=0$
to write $\partial_\tau \phi_1$ in terms of $\{\phi_j\}$, giving the second variational formula, \eqref{eq:hv2}.
%$$\partial_\tau^2 \sigma_{1}= \langle (\partial_\tau^2  A )\phi_1, \phi_1\rangle_{\mch}+ 2\re \sum_{j\not = 1} \frac{1}{\sigma_1-\sigma_j}\left| \langle (\partial_\tau A )\phi_1,\phi_j \rangle_{\mch}\right|^2, $$
\end{proof}

We now return to the Dirichlet-to-Neumann operator, and 
consider the special case of $\lambda=i\kappa$ with $\kappa >0$.
%For such  $\kappa$ the Dirichlet-to-Neumann  operator  $\dtn(i\kappa)$ is a non-negative self-adjoint operator on $L^2(\partial \Omega)$ with domain $H^1(\partial \Omega)$: see Lemma 3.5 of \cite{gmz}.  Thus $(\dtn(i\kappa))^{-1}:L^2(\partial \Omega)\rightarrow H^1(\partial \Omega)$ and $\dtn(i\kappa)^{-1}$ is compact  as an operator from $L^2(\partial \Omega)$ to itself, and hence $\dtn(i\kappa)$ has 
For such  $\kappa$ the Dirichlet-to-Neumann  operator  $\dtn(i\kappa)$ is bijective
%\footnote{Rephrased and expanded slightly the explanation of why DtN is self-adjoint from $H^1$ to $L^2$. Does this look OK?  Looks fine.}
from $H^1(\partial \Omega)$ to $L^2(\partial \Omega)$: see Lemma 3.5 of \cite{gmz}. Hence (see Proposition 8.3 of Appendix A of \cite{taylor})   $\dtn(i\kappa)$ is  non-negative and self-adjoint on $L^2(\partial \Omega)$, with  domain $H^1(\partial \Omega)$ and with  
discrete spectrum accumulating at infinity.
Our theorem shows 
$\lim_{\kappa \downarrow 0}\dtn(i\kappa)$ exists, and we  we denote it $\dtn(0)$.

In fact, $\dtn(0)$ is self-adjoint and non-negative as well, with discrete spectrum accumulating only at infinity.  To see this, note that for $\kappa >0$
and $\mu \in \Complex$
\begin{equation}\label{eq:dtninverse}(\dtn(0)-\mu)(\dtn(i\kappa)-\mu)^{-1}=I+(\dtn(0)-\dtn(i\kappa))(\dtn(i\kappa)-\mu)^{-1}.
\end{equation}
Since by the first proof of Theorem \ref{t:dtn}, $\dtn(0)-\dtn(i\kappa): H^{1/2}(\partial \Omega)\rightarrow H^{1/2}(\partial \Omega)$ and $(\dtn(i\kappa)-\mu)^{-1}:L^2(\partial \Omega)\rightarrow H^1(\partial \Omega)$ (for $\mu \not \in \operatorname{spec}(\dtn(i\kappa))$),
$$(\dtn(0)-\dtn(i\kappa))(\dtn(i\kappa)-\mu)^{-1}:L^2(\partial \Omega)\rightarrow L^2(\partial \Omega)$$
is compact.  Moreover, for $\mu\ll0$ the right hand side of \eqref{eq:dtninverse} is invertible. Thus $\dtn(0)$ has compact resolvent,
and hence has discrete spectrum accumulating only at infinity.  
  A Green's theorem argument shows that $\dtn(0)$ is symmetric with one-dimensional null space 
 spanned by the constant functions.   Moreover, we see from this that $\dtn(0)+i$ is invertible, so that $\dtn(0)$ is self-adjoint.
  %Then the compactness of the error
%term in \eqref{eq:dtninverse} implies that the range of $\dtn(0)$ is closed, and the cokernel has dimension $1$.  This allows us to see that the operator $\dtn(0)$ 
%is self-adjoint.  

%We return to the $\kappa$ notation, since $\dtn(\lambda)$ is especially nicely behaved on the positive imaginary axis.
%We note that Theorem \ref{t:dtn} shows that $\lim_{\kappa \downarrow 0} \dtn(i\kappa)$ exists, and we shall denote this by $\dtn(0)$.

For $\kappa \geq 0$, denote the smallest eigenvalue of $\dtn(i \kappa)$ by $\sigma_1(i \kappa)$.  Note that $\sigma_1(0)=0$, and $0$ has multiplicity
$1$ as an eigenvalue of $\dtn(0)$.  By our discussion above, $0$ is an isolated eigenvalue of $\dtn(0)$.  Then 
%\footnote{In order to apply \cite[Theorem XII.8]{rs}, we need to check the $\sigma_1(\epsilon)$ is nondegenerate. Does that mean we need to check $\sigma_1(0)$ is an isolated eigenvalue of $\dtn(0)$? One way to do this might be to use \eqref{eq:u} to write
%$\dtn(0) = \widetilde \dtn(0) + \partial_\nu[R(0) ([\Delta, \chi] + \chi)\mathcal U f],$
%where $\widetilde \dtn(0)$ is the composition of three nice functions: first the inclusion map from $H^1(\partial \Omega)$ into $H^1(\partial (\Omega \cap B_R(0)))$, second the standard DtN map on $\Omega \cap B_R(0)$, and third the restriction map from $L^2(\partial (\Omega \cap B_R(0)))$ to $L^2(\partial \Omega)$. Meanwhile the $\partial_\nu R \cdots$ term is bounded on $L^2(\partial \Omega)$, and so $\dtn(0)$ is a bounded perturbation of an operator with compact resolvent and thus has compact resolvent. The fact that $\dtn(0)$ has discrete spectrum might also be used when we apply Hadamard's second variational formula and send $\kappa \to 0$, or is there a good way to see the limit exists without that? }
%
there is an $\epsilon>0$ so that $\sigma_1(i \kappa)$ is an analytic function of $\kappa \in (0,\epsilon)$ and is
continuous on $[0,\epsilon]$: see \cite[Theorem XII.8]{rs}.
Moreover, if $\phi_1(i \kappa)$ is the associated eigenfunction of $\dtn(i\kappa)$ with $\| \phi_1(i\kappa)\|_{L^2(\partial \Omega)}=1$, then
$\phi$ can be chosen to depend smoothly on $\kappa \in (0,\epsilon)$.   

\begin{prop}\label{p:dtnev}
For $\Omega$ as in Theorem \ref{t:dtn} and $\kappa >0$
\begin{equation}\label{eq:eva}
\sigma_1(i\kappa)=- \frac{2\pi}{\ell (\partial \Omega)} \frac{1}{\log (i \kappa )-a}+ O((\log \kappa )^{-2})
\end{equation}
as $\kappa \downarrow 0$.  Here
 $\ell(\partial \Omega)$ is the length of $\partial \Omega$.
\end{prop}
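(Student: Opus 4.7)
The plan is to combine the expansion of $\dtn(i\kappa)$ from Theorem \ref{t:dtn} with Hadamard's variational formula \eqref{eq:hv1}. By Theorem \ref{t:dtn},
\[
\dtn(i\kappa)=\dtn(0)+\frac{1}{\log(i\kappa)-a}\,\T_{0,-1}+O(\kappa^2\log\kappa),
\]
in the operator norm of $H^{1/2}(\partial\Omega)\to H^{-1/2}(\partial\Omega)$, and from the discussion preceding the proposition, $\dtn(0)$ is self-adjoint and nonnegative with $\sigma_1(0)=0$ a simple isolated eigenvalue and normalized eigenfunction $\phi_1(0)=\ell(\partial\Omega)^{-1/2}$ (the constant). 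Hadamard's formula then suggests the leading behavior
\[\sigma_1(i\kappa)=\frac{\langle\T_{0,-1}\phi_1(0),\phi_1(0)\rangle_{L^2(\partial\Omega)}}{\log(i\kappa)-a}+O\bigl((\log\kappa)^{-2}\bigr),\]
so the main task is to evaluate this pairing.

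To identify $\T_{0,-1}$, I would insert the resolvent expansion \eqref{e:resexp} into the formula $u=\chi\mathcal{U}f+R(i\kappa)\bigl([\Delta,\chi]\mathcal{U}f+(i\kappa)^2\chi\mathcal{U}f\bigr)$ from the first proof of Theorem \ref{t:dtn} and use $B_{0,-1}=(2\pi)^{-1}G\otimes G$ from \eqref{e:adef}, obtaining
\[\T_{0,-1}f=\frac{\partial_\nu G}{2\pi}\int_\Omega G\,[\Delta,\chi]\mathcal{U}f\,dx.\]
Specializing to $f=1$: since $\mathcal{U}(1)$ is harmonic in $\Omega\cap D_\rho$, one has $[\Delta,\chi]\mathcal{U}(1)=\Delta(\chi\mathcal{U}(1))$, and Green's identity on $\Omega\cap D_\rho$, using $G|_{\partial\Omega}=0$, $\mathcal{U}(1)|_{\partial\Omega}=1$, $\mathcal{U}(1)|_{\partial D_\rho}=0$, and $\chi\in C_0^\infty(D_\rho)$, collapses the volume integral to $-\int_{\partial\Omega}\partial_\nu G\,dS$. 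A second application of the divergence theorem, this time to $\Delta G=0$ on $\Omega\cap D_R$ combined with $G(x)=\log|x|+O(|x|^{-1})$ as $|x|\to\infty$, yields $\int_{\partial\Omega}\partial_\nu G\,dS=-2\pi$. Hence $\T_{0,-1}(1)=\partial_\nu G$ on $\partial\Omega$ and $\langle\T_{0,-1}\phi_1(0),\phi_1(0)\rangle_{L^2(\partial\Omega)}=-2\pi/\ell(\partial\Omega)$.

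To make the remainder $O((\log\kappa)^{-2})$ rigorous, I would apply \eqref{eq:hv1} pointwise for $\kappa\in(0,\epsilon)$ and integrate from $0$ to $\kappa$. Differentiating the expansion term by term gives $\frac{d}{d\kappa}\dtn(i\kappa)=-\bigl[\kappa(\log(i\kappa)-a)^2\bigr]^{-1}\T_{0,-1}+O(\kappa\log\kappa)$, and the antiderivative identity $\int_0^\kappa -\bigl[t(\log(it)-a)^2\bigr]^{-1}\,dt=(\log(i\kappa)-a)^{-1}$ produces the claimed leading term. The main obstacle is controlling the error from replacing $\phi_1(i\kappa)$ by $\phi_1(0)$ inside the integrand: I would show $\|\phi_1(i\kappa)-\phi_1(0)\|_{L^2(\partial\Omega)}=O(|\log\kappa|^{-1})$ by writing the ground-state projector as the Riesz contour integral $P_1(i\kappa)=(2\pi i)^{-1}\oint_\gamma(\mu-\dtn(i\kappa))^{-1}\,d\mu$ over a small circle $\gamma$ separating $0$ from the rest of $\operatorname{spec}\dtn(0)$, and expanding via a Neumann series using the operator expansion above. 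The resulting eigenfunction correction contributes $O\bigl([t|\log t|^3]^{-1}\bigr)$ to the integrand, whose integral from $0$ to $\kappa$ is $O((\log\kappa)^{-2})$, completing the proof.
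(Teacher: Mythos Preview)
Your proposal is correct and lands on the same leading-term computation as the paper (identifying $\T_{0,-1}(1)=\partial_\nu G$ and $\int_{\partial\Omega}\partial_\nu G=-2\pi$), but the organization of the perturbation argument is genuinely different.

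The paper does \emph{not} differentiate or integrate in $\kappa$. Instead it sets $\tau=(\log(i\kappa)-a)^{-1}$ and observes that, by Theorem~\ref{t:dtn}, $\dtn$ is smooth in $\tau$ up to $\tau=0$, with $\dtn|_{\tau=0}=\T_{0,0}$ and $\partial_\tau\dtn|_{\tau=0}=\T_{0,-1}$ (the $j\ge 1$ terms are flat at $\tau=0$). Hadamard's formula \eqref{eq:hv1} in the variable $\tau$ then gives $\partial_\tau\sigma_1|_{\tau=0}=\langle\T_{0,-1}\phi_1(0),\phi_1(0)\rangle$ directly, without any integration. For the $O(\tau^2)=O((\log\kappa)^{-2})$ remainder, the paper invokes the \emph{second} variational formula \eqref{eq:hv2} to see that $\partial_\tau^2\sigma_1$ stays bounded as $\tau\to 0$; no Riesz projector or Neumann-series control of $\phi_1(i\kappa)-\phi_1(0)$ is needed. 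For the actual evaluation of $\T_{0,-1}$ on constants the paper also takes a shortcut: since $f\equiv 1$ it uses $u(i\kappa)=\chi-R(i\kappa)(-\Delta+\kappa^2)\chi$ together with the ready-made identities \eqref{e:b00b01dc2} ($B_{0,0}\Delta\chi=1-\chi$, $B_{0,-1}\Delta\chi=G$), bypassing your Green's-identity reduction of $\int_\Omega G\,[\Delta,\chi]\mathcal U(1)$.

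What each approach buys: the paper's change of variable $\kappa\mapsto\tau$ turns a singular perturbation into a regular one, so two lines of Taylor expansion plus \eqref{eq:hv2} finish the error term. Your route stays in the physical variable $\kappa$, which forces you to integrate a singular integrand $[t(\log(it)-a)^2]^{-1}$ and to control the eigenfunction drift separately via the contour-integral projector; this is more hands-on and requires a bit of care with the mapping properties (you need $(\mu-\dtn(0))^{-1}$ bounded $L^2\to H^1$ so that the perturbation $\dtn(i\kappa)-\dtn(0):H^{1/2}\to H^{1/2}$ can be fed back into the Neumann series), but it is self-contained and avoids appealing to the second-order formula \eqref{eq:hv2}.
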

In dimension two this answers a question of D. Grebenkov
% \footnote{Would you be up for deleting the three words `for dimension $2$'? Just because the phrase hangs a little awkwardly, and we can use the fact that a subquestion of a question is a question.  Any better like this?}
%for dimension $2$ 
\cite[page 11]{gr22}.  Grebenkov computed this  when 
$\mathscr O$ is a disk (see equations (62) of \cite{gr19}, and (C.2) of \cite{gr21}) and has shown that such quantities are related to the behavior of `boundary local time'--see \cite{gr19}.

\begin{proof}
Let $\langle \cdot , \cdot \rangle_{L^2(\partial \Omega)}$ denote the inner product in $L^2(\partial \Omega)$,  and 
% For $\kappa\in (0,\epsilon)$,$$\sigma_1(i\kappa)= \langle \dtn(i\kappa)\phi_1(i\kappa),\phi_1(i\kappa)\rangle.$$
let  `$\cdot$' denote differentiation with respect to
%\footnote{Wouldn't it be a little simpler to differentiate with respect to $(\log(i\kappa)-a)^{-1}$, so as to avoid requiring a chain rule calculation when differentiating the expansion of $\dtn(i\kappa)\phi_1(0)$?  It's also natural in light of the form of our final answer in \eqref{eq:eva} Done} 
$(\log (i\kappa)-a)^{-1}$. 
% Using that $\| \phi(i\kappa)\|=1$ and $\dtn(i\kappa)$ is self-adjoint gives
%by Hadamard's variational formula
By \eqref{eq:hv1},
$$\dot{\sigma}_1(i\kappa)=  \langle \dot{\dtn}(i\kappa)\phi_1(i\kappa),\phi_1(i\kappa)\rangle_{L^2(\partial \Omega)},$$
for $\kappa \in (0,\epsilon)$.
Because $\phi_1$ is continuous at $0$ with $\phi_1(0) =  (\ell(\partial \Omega))^{-1/2}$ and $\dot {\dtn}(i\kappa)$ is continuous at $0$, this 
identity holds in the limit as $\kappa \downarrow 0$.
  Now we note that
we can find the unique $u\in L^2(\Omega)$ satisfying \eqref{eq:udef} with  $\lambda =i\kappa$ and $f= 1$ by setting
%\footnote{Should we just write $u(i\kappa)$ instead of $u(x,i\kappa)$? Done} 
$u(i\kappa)= \chi -R(i\kappa) (-\Delta +\kappa^2)\chi$
where $\chi\in C_0^\infty(\Real^2)$ is $1$ in a neighborhood of $\mathscr O$.
Our asymptotics of $R(i\kappa) $ from \eqref{e:resexp} and the  formulas \eqref{e:b00b01dc2}
imply $$\chi u(i\kappa) = \chi ( \chi +1-\chi + (\log (i\kappa )-a)^{-1} G)+ O(\kappa^2 \log \kappa).$$
%see the proof  of Theorem \ref{t:smat}.  
Thus 
$$\dtn(i\kappa)\phi_1(0)=  (\log (i\kappa )-a)^{-1} (\ell(\partial \Omega))^{-1/2} \partial_\nu G +O(\kappa^2 \log \kappa),$$
and, recalling our expansions hold under differentiation with  respect to $(\log (i\kappa)-a)^{-1}$ as well gives $\lim_{\kappa \downarrow 0}\langle \dot{\dtn}(\kappa)\phi_1(\kappa),\phi_1(\kappa)\rangle_{L^2(\partial \Omega)}= \langle \partial _\nu G , 1\rangle _{L^2(\partial \Omega)} (\ell(\partial \Omega))^{-1}$.
A Green's theorem argument then shows that $\int_{\partial\Omega}\partial_{\nu}G =  -2\pi$.  This completes the proof of \eqref{eq:eva} except that it gives  an error $o((\log \kappa)^{-1})$.

To get an error $O((\log \kappa)^{-2})$, we use \eqref{eq:hv2}.  For this, we introduce the notation that 
$\{\phi_j(i\kappa)\}$ are a complete orthonormal set of eigenfunctions of $\dtn(i\kappa)$ satisfying $\dtn(i\kappa)\phi_j(i\kappa)=\sigma_j(i\kappa)\phi_j(i\kappa)$
with $\sigma_1 \leq\sigma_2\leq \sigma_3\leq\cdots$.   
%The eigenvalues $\sigma_j$ depend continuously on $\kappa$, and the
%eigenfunctions $\phi_j$ can be chosen so that they do as well. 
Then \eqref{eq:hv2} implies, for $\kappa>0$ sufficiently small
\begin{equation*}\label{eq:sv}
\ddot{\sigma}_1(i\kappa)= \langle \ddot{\dtn}(i\kappa)\phi_1(i\kappa),\phi_1(i\kappa)\rangle _{L^2(\partial \Omega)}+ 2 \sum_{j=2}^\infty \frac{1}{\sigma_1(i\kappa)-\sigma_j(i\kappa)}\left| \langle \phi_j(i\kappa),\dot{\dtn}(i\kappa)\phi_1(i\kappa)\rangle _{L^2(\partial \Omega)}\right|^2.
\end{equation*}
%It is important to note the the derivation of the variational formula as applied here does not require the differentiabilty of $\phi_j$ for $j>1$.  We do have,
%however, that the $\sigma_j$ depend continuously on $\kappa$, and the $\phi_j$ can be chosen to do so.  Now using that the right hand side of \eqref{eq:sv}
Using that the right hand side has a finite limit as $\kappa \downarrow 0$ allows us to improve the error in \eqref{eq:eva} to $O((\log \kappa)^{-2})$.
\end{proof}

%Note that the smallest eigenvalue of $\dtn(0)$ is $0$, and it has multiplicity $1$.  Denote the corresponding eigenfunction by $\phi_1(\kappa)$, with
%$\int_{\partial \mathscr{O}}|\phi_1|^2 dS=1$.  Then 
%$$\sigma_1(\kappa)= \langle \dtn(\kappa)\phi_1(\kappa),\phi_1(\kappa)\rangle.$$
%We note that  $\sigma_1(0)=0$ and $\phi_1(0)= (\ell(\mathscr{O}))^{-1/2}$, where $\ell(\mathscr{O})$ is the length of $\mathscr{O}$.  Now we note that
%we can find the unique $u$ satisfying \eqref{eq:udef} with $f= 1$ by setting $u(x,\kappa)= \chi -(-\Delta_{\Omega}+\kappa^2)^{-1} (-\Delta +\kappa^2)\chi$. 
%Our asymptotics of $(-\Delta_{\Omega}+\kappa^2)^{-1} $ mean that $\chi u(\kappa) = \chi ( \chi +1-\chi - (\log (i\kappa )-a)^{-1} G)+ O(\kappa^2 \log \kappa)$, see the 
%proof  of Theorem \ref{t:smat}.  Thus $\dtn(\kappa)\phi_1(0)= \phi_1(0)- (\log (i\kappa )-a)^{-1} \partial_\nu G +O(\kappa^2 \log \kappa)$.   Moreover,
%a Green's theorem argument shows that $\int_{\partial \mathscr{O}}\partial_{\nu}G = 2\pi$.  

%\[
% \tr A(\lambda)^\ell = \Big( \frac {i\pi} {\log \lambda -a} \Big)^k + \frac{k-1}{2^{k-1}}O (\lambda), \qquad \text{for }k \ge2,
%\]
%where we used the mean value theorem estimate
%\[
% |a|<\frac 14 \text{ and } |r| < \frac 14 \qquad \Longrightarrow \qquad \Big|\frac{(a+r)^k -a^k}r \Big| \le \frac {k-1} {2^{k-1}}.
%\]
%Combining with

\appendix

\section{Polar and nonpolar sets}\label{a:nonpolar}
In this appendix we present  some standard material about polar and nonpolar sets. Section \ref{s:basicpolar} contains the elementary basic facts used in the rest of the paper. Section \ref{s:polarbackground} has  more general background and context. Throughout, let $\mathscr O \subset \mathbb R^2$ be compact,  let $\Omega = \mathbb R^2 \setminus \mathscr{O}$, and recall that $\mathscr O $  is \textit{polar} if $C_0^\infty(\Omega)$ is dense in $H^1(\mathbb R^2)$. 

\subsection{Basic facts}\label{s:basicpolar}

We begin with a geometric necessary condition for a compact set to be polar.

\begin{lem}\label{l:nonpgeo}
Let $\mathscr O \subset \mathbb R^2$ be a compact set. If $\mathscr O$ is polar, then  the projection of $\mathscr O$ onto any line in $\mathbb R^2$ has measure zero.
\end{lem}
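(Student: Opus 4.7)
The plan is to argue the contrapositive: assuming that some line projection of $\mathscr O$ has positive one-dimensional Lebesgue measure, I will show that $\mathscr O$ is not polar. By rotating coordinates, I may assume this projection is $\pi\colon \mathbb R^2 \to \mathbb R$, $\pi(x_1,x_2) = x_1$, so $|\pi(\mathscr O)| > 0$.

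The natural candidate for a function in $H^1(\mathbb R^2)$ that cannot be approximated from $C_0^\infty(\Omega)$ is $v \in C_0^\infty(\mathbb R^2)$ with $v \equiv 1$ on a neighborhood $U$ of $\mathscr O$. Assuming for contradiction that $u_n \in C_0^\infty(\Omega)$ satisfies $u_n \to v$ in $H^1(\mathbb R^2)$, I set $w_n := v - u_n$. Then $w_n \to 0$ in $H^1(\mathbb R^2)$, while the crucial rigidity property is that $w_n \equiv 1$ on some neighborhood of $\mathscr O$: because $\operatorname{supp} u_n$ is a compact subset of the open set $\Omega$, it has positive distance from $\mathscr O$, so $u_n$ vanishes on an open neighborhood of $\mathscr O$, and $v \equiv 1$ on $U$.

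Next I would slice by Fubini. Since
\[
\int_{\mathbb R} \|w_n(x_1,\cdot)\|_{H^1(\mathbb R)}^2\,dx_1 \le \|w_n\|_{H^1(\mathbb R^2)}^2 \to 0,
\]
one may pass to a subsequence (still denoted $w_n$) such that $\|w_n(x_1,\cdot)\|_{H^1(\mathbb R)} \to 0$ for every $x_1$ outside some null set $N \subset \mathbb R$. The one-dimensional Sobolev embedding $H^1(\mathbb R) \hookrightarrow C^0_b(\mathbb R)$ then promotes this to $\|w_n(x_1,\cdot)\|_{L^\infty(\mathbb R)} \to 0$ for every $x_1 \notin N$. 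Since $|\pi(\mathscr O)| > 0 = |N|$, I can choose $x_1 \in \pi(\mathscr O) \setminus N$ together with $x_2^* \in \mathbb R$ satisfying $(x_1,x_2^*) \in \mathscr O$. Because $w_n \equiv 1$ on an open neighborhood of $(x_1,x_2^*)$, pointwise evaluation gives $w_n(x_1,x_2^*) = 1$, so $\|w_n(x_1,\cdot)\|_{L^\infty} \ge 1$, contradicting the convergence to zero.

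The step I expect to demand the most care is the subsequence extraction in the Fubini argument: one must pass from $L^2_{x_1}$-convergence of the slice $H^1$-norms to pointwise a.e. convergence along a single fixed subsequence, and then observe that the resulting exceptional null set cannot swallow the positive-measure set $\pi(\mathscr O)$. The remaining ingredients---compact support of $u_n$ in $\Omega$ forcing a whole neighborhood of $\mathscr O$ where $u_n$ vanishes, and the one-dimensional continuous embedding of $H^1$ into bounded continuous functions---are classical and require no additional hypotheses beyond compactness of $\mathscr O$.
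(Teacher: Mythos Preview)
Your proof is correct and takes a genuinely different route from the paper's. You argue the contrapositive by slicing: a sequence $w_n \to 0$ in $H^1(\mathbb R^2)$ yields $\|w_n(x_1,\cdot)\|_{H^1(\mathbb R)} \to 0$ for almost every $x_1$ along a subsequence, and the one-dimensional Sobolev embedding then forces $w_n(x_1,\cdot) \to 0$ uniformly, contradicting $w_n(x_1,x_2^*) = 1$ at a point of $\mathscr O$ lying over some $x_1$ in the positive-measure projection. The paper instead works directly: it introduces the half-strip $W = \{(x_1+t,x_2): (x_1,x_2)\in\mathscr O,\ t\ge 0\}$ to the right of $\mathscr O$, proves Hardy's inequality
\[
\|x_1^{-(1+s)/2}u\|_{L^2(W)} \le \tfrac{2}{s}\|x_1^{(1-s)/2}\partial_{x_1}u\|_{L^2(W)}
\]
for $u \in C_0^\infty(\Omega)$ by integration by parts (the boundary term vanishes since $u$ vanishes near $\mathscr O$, hence on the left edge of $W$), extends it to all of $H^1(\mathbb R^2)$ by the assumed density, and then plugs in $u(x)=e^{-|x|^2/n}$ with $s=3$ to conclude $\int_W x_1^{-4}=0$, so $|W|=0$. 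Your argument is arguably more transparent and uses only Fubini plus the one-dimensional embedding $H^1(\mathbb R)\hookrightarrow C_b(\mathbb R)$; the paper's argument is a touch slicker in that it avoids subsequence extraction and pointwise-a.e.\ reasoning, packaging everything into a single weighted inequality that survives the density limit.
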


\begin{proof}
Without loss of generality, we are projecting onto the $x_2$ axis, and %Let $(x_1,x_2)$ be rectangular coordinates on $\mathbb R^2$ such that  
$x_1>0$ on $\mathscr O$. Let 
\[W = \{(x_1+t,x_2) \in \mathbb R^2 \text{ such that } (x_1,x_2) \in \mathscr O \text{ and } t \ge 0\}.\]
It is enough to show that $W$ has measure zero in $\mathbb R^2$. Let $u \in C_0^\infty(\Omega)$.  By integration by parts and Cauchy--Schwarz (Theorem 181 of \cite{hlp}), for any $s>0$ we have
\[
\|x_1^{\frac{-1-s}2}u\|_{L^2(W)}^2 =  \int_W x_1^{-1-s} |u|^2 = 
\frac 2 s \re \int_W x_1^{-s} u \partial_{x_1}\bar u \le \frac 2 s \|x_1^{\frac{-1-s}2}u\|_{L^2(W)}\|x_1^{\frac{1-s}2}\partial_{x_1}u\|_{L^2(W)},
\] 
which implies Hardy's inequality (Theorem 330 of \cite{hlp}):
\begin{equation}\label{e:hardy}
\|x_1^{\frac{-1-s}2}u\|_{L^2(W)} \le \frac 2 s \|x_1^{\frac{1-s}2}\partial_{x_1}u\|_{L^2(W)}.
\end{equation}
Since $\mathscr{O}$ is polar, by density \eqref{e:hardy} holds for all $u \in H^1(\mathbb R^2)$. Applying \eqref{e:hardy} with $s =3$ and $u(x) = e^{-|x|^2/n}$, and letting $n \to \infty$, gives $\int_W x_1^{-4} = 0$, which implies that $W$ has measure zero.
\end{proof}

Compact polar sets have many well-known  equivalent characterizations, some of which we discuss in Section \ref{s:polarbackground}. For our main results we need only Lemma \ref{l:nonpuse}, which follows Section 13.2 of \cite{maz}.

\begin{lem}\label{l:nonpuse}
Let  $\mathscr O \subset \mathbb R^2$ be compact, and let $\Omega = \mathbb R^2 \setminus \mathscr O$. The following are equivalent:
\begin{enumerate}
\item $\mathscr O$ is polar.
\item The constant function $1$ is locally in $H^1_0(\Omega)$.
\item $\inf\{\|u\|_{H^1(\mathbb R^2)}\colon u \in C_0^\infty(\mathbb R^2), \ u = 1 \text{ near } \mathscr{O} \} = 0$.
\end{enumerate}
\end{lem}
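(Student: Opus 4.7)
The plan is to prove the equivalences cyclically: (1) $\Rightarrow$ (3) $\Rightarrow$ (2) $\Rightarrow$ (1). The first two implications are routine approximation arguments; the last contains the key subtlety.

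For (1) $\Rightarrow$ (3), I would pick any $\varphi \in C_0^\infty(\mathbb R^2)$ with $\varphi \equiv 1$ in a neighborhood of $\mathscr O$. Polarity yields a sequence $\psi_n \in C_0^\infty(\Omega)$ with $\psi_n \to \varphi$ in $H^1(\mathbb R^2)$; since each $\psi_n$ has support at positive distance from $\mathscr O$, the difference $u_n := \varphi - \psi_n$ lies in $C_0^\infty(\mathbb R^2)$, equals $1$ in a neighborhood of $\mathscr O$, and satisfies $\|u_n\|_{H^1} \to 0$. For (3) $\Rightarrow$ (2), given $\chi \in C_0^\infty(\mathbb R^2)$ and a sequence $u_n$ from (3), the product $\chi(1-u_n)$ lies in $C_0^\infty(\Omega)$ (it vanishes on the open set where $u_n \equiv 1$), and $\|\chi - \chi(1-u_n)\|_{H^1} = \|\chi u_n\|_{H^1} \le C_\chi \|u_n\|_{H^1} \to 0$, showing $\chi|_\Omega \in H^1_0(\Omega)$.

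The harder step is (2) $\Rightarrow$ (1). The subtlety is that a Cauchy sequence $\psi_n \in C_0^\infty(\Omega)$ converging to $\chi|_\Omega$ in $H^1(\Omega)$, viewed in $H^1(\mathbb R^2)$ via extension by zero, converges to $\chi \mathbf{1}_\Omega$ rather than to $\chi$ itself, so the density asserted by (1) cannot hold unless $\mathscr O$ is Lebesgue null. I would establish this as follows. Extension by zero embeds $H^1_0(\Omega)$ isometrically into $H^1(\mathbb R^2)$, so applying (2) to cutoffs equal to $1$ on arbitrarily large balls yields $\mathbf{1}_\Omega \in H^1_{\text{loc}}(\mathbb R^2)$. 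The product rule for $H^1\cap L^\infty$ functions applied to the identically zero product $\mathbf{1}_\Omega(1-\mathbf{1}_\Omega)$ then gives
\[
0 = \nabla\bigl(\mathbf{1}_\Omega(1-\mathbf{1}_\Omega)\bigr) = (1 - 2\mathbf{1}_\Omega)\nabla \mathbf{1}_\Omega
\]
almost everywhere, and since $1 - 2\mathbf{1}_\Omega = \pm 1$ we deduce $\nabla \mathbf{1}_\Omega \equiv 0$. Hence $\mathbf{1}_\Omega$ is constant on the connected set $\mathbb R^2$, and since $|\Omega| = \infty$ it must equal $1$ almost everywhere, forcing $|\mathscr O| = 0$.

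With $\mathscr O$ Lebesgue null, density is now straightforward: I would approximate an arbitrary $f \in H^1(\mathbb R^2)$ first by $f_n \in C_0^\infty(\mathbb R^2)$, then use (2) to produce $g_{n,k} \in C_0^\infty(\Omega)$ with $g_{n,k} \to f_n|_\Omega$ in $H^1(\Omega)$; after extension by zero, $g_{n,k} \to f_n \mathbf{1}_\Omega = f_n$ in $H^1(\mathbb R^2)$ because $\mathscr O$ is a null set, and a diagonal extraction finishes the proof. The main obstacle in this plan is the chain-rule step squeezing $\mathscr O$ down to measure zero; everything else is elementary bookkeeping.
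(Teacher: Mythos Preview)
Your argument is correct. The cycle runs in the opposite direction from the paper's, which proves $(1)\Rightarrow(2)\Rightarrow(3)\Rightarrow(1)$, and the two proofs differ substantively only at the ``hard'' return step.

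The paper closes the loop via $(3)\Rightarrow(1)$ with a one-line duality argument: given $u_n\in C_0^\infty(\mathbb R^2)$ equal to $1$ near $\mathscr O$ with $\|u_n\|_{H^1}\to 0$, any $w\in H^{-1}(\mathbb R^2)$ annihilating $C_0^\infty(\Omega)$ satisfies $\langle w,\varphi\rangle=\lim_n\langle w,(1-u_n)\varphi\rangle=0$ for every $\varphi\in C_0^\infty(\mathbb R^2)$, and Hahn--Banach then gives density. This sidesteps any measure-theoretic considerations entirely. Your route $(2)\Rightarrow(1)$ instead extracts the intermediate fact $|\mathscr O|=0$ via the $H^1$ product rule applied to $\mathbf 1_\Omega(1-\mathbf 1_\Omega)$; this is a nice observation in its own right but is more machinery than the lemma needs. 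One small point to make explicit in your final step: statement (2) literally only asserts $\chi|_\Omega\in H^1_0(\Omega)$ for cutoffs $\chi$, so to get $f_n|_\Omega\in H^1_0(\Omega)$ for arbitrary $f_n\in C_0^\infty(\mathbb R^2)$ you should note that $H^1_0(\Omega)$ is stable under multiplication by $C_0^\infty(\mathbb R^2)$ functions and write $f_n|_\Omega=f_n\cdot\chi|_\Omega$ with $\chi\equiv 1$ on $\operatorname{supp}f_n$.
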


\begin{proof}

(1) $\Longrightarrow$ (2). This follows from the fact that the constant function  $1$ is locally in $H^1(\mathbb R^2)$.%Fix $\chi \in C_0^\infty(\mathbb R^2)$ such that $\chi=1$ near $\mathscr O$, and let $u_n$ be a sequence in $C_0^\infty(\Omega)$ converging to $\chi$ in $H^1(\mathbb R^2)$. Then $1 - \chi + u_n$ is a sequence of functions in $C^\infty(\mathbb R^2)$, vanishing near $\mathscr O$, which converges to the constant function $1$  in $H^1_{\rm loc}(\mathbb R^2)$.

(2) $\Longrightarrow$ (3). Let $v_n$ be a sequence in $C_0^\infty(\Omega)$ converging to $1$ in $H^1_{\rm loc}(\mathbb R^2)$. Fix $\chi \in C_0^\infty(\mathbb R^2)$ such that $\chi=1$ near $\mathscr O$. Then $\inf_n\{\|(1-v_n)\chi\|_{H^1(\mathbb R^2)} \} = 0$.% $\inf$ and let $u_n = (1-v_n)\chi$  is a sequence of functions in $C_0^\infty(\mathbb R^2)$ which are $1$ near $\mathscr O$ such that $\|u_n\|_{H^1(\mathbb R^2)} \to 0$ as $n \to \infty$. 

(3) $\Longrightarrow$ (1). Let  $u_n$ be a sequence of functions in $C_0^\infty(\mathbb R^2)$ which are $1$ near $\mathscr O$ such that $\|u_n\|_{H^1(\mathbb R^2)} \to 0$.
Let $\varphi \in C_0^\infty(\mathbb R^2)$. For any $w \in H^{-1}(\mathbb R^2)$ which vanishes on $C_0^\infty(\Omega)$, we have 
%\begin{equation}\label{e:wphiun}
\[\langle w , \varphi \rangle = \lim_{n \to \infty} \langle w , (1-u_n)\varphi\rangle = 0.\]
% \end{equation} 
By the separating hyperplane version of the Hahn--Banach theorem (see Proposition 4.6 of Appendix~A of \cite{taylor}), it follows that the $H^1(\mathbb R^2)$ distance from $\varphi$ to  $C_0^\infty(\Omega)$ is zero.
\end{proof}

\subsection{Background and context}\label{s:polarbackground}
If $\mathscr O$ is nonpolar, then $- C(\mathscr O)$ is known as Robin's constant, because it solves Robin's problem, which asks for the constant value assumed  on $\mathscr O$  by the potential of the equilibrium unit charge distribution on $\mathscr O$ \cite{robin}.
The quantity $e^{C(\mathscr O)}$ is known as the logarithmic capacity of $\mathscr O$ and it measures the size of $\mathscr O$. For example, a disk or circle has logarithmic capacity equal to its radius. See Sections V.2 and V.3 of \cite{nevan}, and Chapter 5 of \cite{ransfordbook}, for general introductions, and see  \cite{ransford,batr} for more on computing $C(\mathscr O)$. 

To expand on the above, and to also make contact with the theory of subharmonic functions, let
\[
 J_A(\mu,\nu) :=\int \!\! \int \log \tfrac A{|x-y|}\, d\mu(x)\,d\nu(y),
\]
for  $A>0$ and  $\mu, \ \nu$ finite signed 
Borel measures of compact support in $\mathbb R^2$. By Theorem~1.16 of \cite{landkof}, if $\mu \ne 0$ and the diameter of the support of $\mu$ is $\le A$, then $0 < J_A(\mu,\mu) \le \infty$.

\begin{lem}\label{l:pol2}
Let  $\mathscr O \subset \mathbb R^2$ be compact. The following are equivalent:
\begin{enumerate}
\item $\mathscr O$ is polar. 
\item $J_{1}(\mu,\mu) = +\infty$ for every nonzero finite signed Borel measure $\mu$ supported on $\mathscr O$.
\item $\mathscr O = \{x \in \mathbb R^2 \colon u(x) = -\infty\}$ for some subharmonic function $u$ on $\mathbb R^2$.
\end{enumerate}
\end{lem}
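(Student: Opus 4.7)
The plan is to establish the equivalences in two stages: $(1)\Leftrightarrow(2)$ directly via the logarithmic potential, then $(2)\Leftrightarrow(3)$ from classical potential theory. The central object is the logarithmic potential $p_\mu(x):=-\int\log|x-y|\,d\mu(y)$, which satisfies $-\Delta p_\mu=2\pi\mu$ as a distribution on $\mathbb R^2$.

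For $(1)\Rightarrow(2)$ I would argue by contradiction. Suppose $\mu$ is a nonzero finite signed Borel measure on $\mathscr O$ with $J_1(\mu,\mu)<\infty$. The key step is that finite log energy forces $\nabla p_\mu\in L^2_{\mathrm{loc}}(\mathbb R^2)$: mollifying $\mu$ by a standard family $\phi_\epsilon$ and computing, for any cutoff $\chi\in C_0^\infty(\mathbb R^2)$,
\[
\int\chi^2|\nabla p_{\mu*\phi_\epsilon}|^2\,dx = 2\pi\!\int\chi^2 p_{\mu*\phi_\epsilon}\,d(\mu*\phi_\epsilon)-2\!\int p_{\mu*\phi_\epsilon}\,\chi\nabla\chi\cdot\nabla p_{\mu*\phi_\epsilon}\,dx,
\]
and sending $\epsilon\downarrow 0$ by lower semicontinuity yields $\int\chi^2|\nabla p_\mu|^2\,dx<\infty$. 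By Lemma~\ref{l:nonpuse}(3) I choose $u_n\in C_0^\infty(\mathbb R^2)$ with $u_n\equiv 1$ near $\mathscr O$ and $\|u_n\|_{H^1}\to 0$; after multiplication by a fixed cutoff equal to $1$ near $\mathscr O$ the $u_n$ may be assumed to have uniformly bounded support in some ball $B_R$. For any $f\in C_0^\infty(\mathbb R^2)$, the product $fu_n$ equals $f$ near $\mathscr O$, so
\[
\int f\,d\mu = \int fu_n\,d\mu = \frac{1}{2\pi}\int_{B_R}\!\nabla(fu_n)\cdot\nabla p_\mu\,dx \longrightarrow 0
\]
by integration by parts and Cauchy--Schwarz, forcing $\mu=0$. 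The converse $(2)\Rightarrow(1)$ is contrapositive: if $\mathscr O$ is non-polar, a standard capacitary variational argument (cf.\ \cite[Chapter 13]{maz}) produces $u^*\in H^1(\mathbb R^2)$ minimizing $\|u\|_{H^1}^2$ subject to $u=1$ quasi-everywhere on $\mathscr O$, and $\mu^*:=(-\Delta+1)u^*$ is then a nonzero positive Borel measure on $\mathscr O$ with Bessel energy $\int u^*\,d\mu^*=\|u^*\|_{H^1}^2<\infty$. Since the 2D Bessel kernel $K_0(r)/(2\pi)$ agrees with $-\log(r)/(2\pi)$ near $r=0$ up to a bounded term and decays exponentially at infinity, finite Bessel energy is equivalent to finite logarithmic energy on compact sets, so $J_1(\mu^*,\mu^*)<\infty$ and $(2)$ fails.

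For $(2)\Leftrightarrow(3)$ I would appeal to classical potential theory. The direction $(2)\Rightarrow(3)$ is Evans' theorem (Theorem~5.5.4 of \cite{ransfordbook}; see also \cite{landkof}): a compact set admitting no nonzero positive finite Borel measure of finite logarithmic energy is contained in $\{u=-\infty\}$ for some subharmonic $u$. The resulting inclusion $\mathscr O\subseteq\{u=-\infty\}$ is upgraded to equality by writing $\mathscr O=\bigcap_n V_n$ for decreasing open neighborhoods, constructing for each $n$ a subharmonic function $u_n$ with $\mathscr O\subseteq\{u_n=-\infty\}$ and bounded on $\mathbb R^2\setminus V_n$, and summing $u=\sum_n c_n u_n$ with weights chosen so that the series converges locally uniformly off $\mathscr O$. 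For $(3)\Rightarrow(2)$: given subharmonic $u$ with $\{u=-\infty\}=\mathscr O$, any nonzero positive measure $\nu$ on $\mathscr O$ with $J_1(\nu,\nu)<\infty$ would yield $\int u\,d\nu>-\infty$ by the classical energy principle (cf.\ Theorem~3.8 of \cite{landkof}), contradicting $u\equiv-\infty$ on $\operatorname{supp}\nu$; the upgrade to signed $\mu$ is immediate from $(1)\Leftrightarrow(2)$, since the positive-measure form of $(2)$ already forces $(1)$ via the variational argument above. The main obstacle is this diagonal refinement of Evans' theorem in $(2)\Rightarrow(3)$: the one-line classical statement yields only the inclusion $\mathscr O\subseteq\{u=-\infty\}$, and securing equality requires a careful choice of weights $c_n$ in the series construction so that $u$ diverges at every point of $\mathscr O$ while remaining finite elsewhere.
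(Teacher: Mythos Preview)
Your architecture matches the paper's: $(1)\Leftrightarrow(2)$ via Deny--Lions energy ideas and capacitary measures, $(2)\Leftrightarrow(3)$ by quoting classical potential theory. The execution differs. For $(1)\Rightarrow(2)$ the paper avoids your mollification step entirely: setting $\psi=-\Delta u$ and $A=\operatorname{diam}(\operatorname{supp} u)$, Cauchy--Schwarz on the energy form gives $J_A(\psi,\mu)^2\le J_A(\mu,\mu)\,J_A(\psi,\psi)$, and since $\tfrac{1}{2\pi}\log\tfrac{A}{|\cdot|}$ is a fundamental solution one reads off $J_A(\psi,\psi)=2\pi\int|\nabla u|^2$ and $J_A(\psi,\mu)=2\pi\int u\,d\mu$, yielding $2\pi\bigl(\int u\,d\mu\bigr)^2\le J_A(\mu,\mu)\int|\nabla u|^2$ in one line. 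This is cleaner than establishing $\nabla p_\mu\in L^2_{\mathrm{loc}}$ for signed $\mu$, which is correct but a bit delicate. On the other hand your device of testing against $fu_n$ is a genuine improvement in presentation: it gives $\mu=0$ directly, whereas the paper's inequality, applied only with $u\equiv 1$ near $\mathscr O$, literally yields just $\mu(\mathscr O)=0$. For the remaining implications the paper simply cites Wallin for $(2)\Rightarrow(1)$ and Ransford (Theorem~3.5.1, Corollary~3.5.4) for $(2)\Leftrightarrow(3)$; your Bessel-capacitary construction and your series-of-Evans-potentials argument are valid self-contained substitutes.

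One point to tighten: in $(3)\Rightarrow(2)$, the claim ``$\int u\,d\nu>-\infty$ for any positive $\nu$ of finite energy'' is not the right invocation. What works is to take $\nu$ to be the \emph{equilibrium} measure of $\mathscr O$, so that its potential $p_\nu$ is bounded above (Frostman); writing $u=h-p_\sigma$ locally by Riesz decomposition and applying Fubini gives $+\infty=\int p_\sigma\,d\nu=\int p_\nu\,d\sigma\le (\sup p_\nu)\,\sigma(B)<\infty$, the desired contradiction.
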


In physical terms, if $\mu$ and $\nu$ are two distributions of some finite quantity of charge, then $J_A(\mu,\mu) - J_A(\nu,\nu)$ is the difference of their electrostatic potential energies. Thus, a compact set is  polar if and only if it is so small that gathering a finite quantity of charge onto it requires infinite work. 

By Theorems 3.7.6 and 5.2.1 of \cite{ransfordbook}, if $\mathscr O$ is not polar, then $C(\mathscr O) = - \min_\mu\{J_1(\mu,\mu)\}$, where the minimum is taken over all Borel probability measures supported in $\mathscr O$. If $\mathscr O$ is the unit circle,  then $C(\mathscr O) = 0$. If $\mathscr O$ is  any nonpolar set, then $C(\mathscr O)$ is the work done moving a unit quantity of charge from its equilibrium distribution  on $\mathscr O$  to its equilibrium distribution on the unit circle.

Let us also briefly mention some further geometric characterizations and properties of polar sets. By Theorem 5.5.2 of \cite{ransfordbook}, a compact set is polar if and only if its transfinite diameter is zero, and thus Lemma \ref{l:nonpgeo} is a special case of Theorem 2 of Section VII.2 of \cite{goluzin}. Polar sets have Hausdorff dimension zero: see Section 3.2 of \cite{ransfordbook} and Section V.6 of \cite{nevan}.  By Kakutani's Theorem, \cite[Section 8.3]{MoPe}, a compact set is polar if and only if  Brownian motion hits it with probability zero.

\begin{proof}[Proof of Lemma \ref{l:pol2}]
(1) $\Longrightarrow$ (2).  It is enough to prove that, if $u \in C_0^\infty(\mathbb R^2)$  is   $1$ near $\mathscr{O}$, then
\begin{equation}\label{e:denylions}
 2 \pi (\mu(\mathscr O))^2 \le J_{A}(\mu,\mu) \int |\nabla u|^2,
\end{equation}
where $A$ is the diameter of the support of $u$.
We follow Lemma 1.1 of Chapter II of \cite{denylions}. Let $\psi = - \Delta u$. By Cauchy--Schwarz, we have
\begin{equation}\label{e:dl1}
J_A(\psi,\mu)^2\le J_{A}(\mu,\mu) J_{A}(\psi,\psi) .
\end{equation}
Since $\frac {-1} {2\pi} \Delta \log  \frac {A}{|x|} = \delta(x)$ (see Proposition 4.9 of Chapter 3 of \cite{taylor}), we have
\[
\int  \log\tfrac {A}{|x-y|} \psi(x) \, dx = 2 \pi u(y),
\]
which gives
\begin{equation}\label{e:dl2}
J_{A}(\psi,\psi)  = -2 \pi \int u \Delta u =  2 \pi \int |\nabla u|^2,
\end{equation}
and, using also the fact that $u = 1$ on $\mathscr O$,
\begin{equation}\label{e:dl3}
 J_A(\psi,\mu) = 2 \pi \int u \, d \mu = 2 \pi \int d\mu = 2 \pi \mu(\mathscr O).
\end{equation}
Inserting \eqref{e:dl2} and \eqref{e:dl3} into \eqref{e:dl1} gives \eqref{e:denylions}.

(2) $\Longrightarrow$ (1). This is a special case of equation (2.1) of \cite{wallin}.

(2) $\Longleftrightarrow$ (3). See Theorem 3.5.1 and Corollary 3.5.4 of \cite{ransfordbook}.
\end{proof}

\section{Convergence near zero of series with logarithmic terms}\label{a:series}

To analyze series  of the form $\sum_{j,k} c_{j,k}(\log \lambda - a)^k \lambda^j$, where $\lambda \in \Lambda$, it is convenient to introduce a new coordinate $\nu$ on $\Lambda$, defined by
\[
 \nu = \lambda e^{-a}, \qquad \log \nu = \log \lambda - a.
\]
Then
\[
 \sum_{j,k} c_{j,k} (\log \lambda - a)^k \lambda^j =  \sum_{j,k} c_{j,k} e^{aj} (\log \nu)^k \nu^j.
\]
In our applications, $\im a = \pi/2$, so the physical region $0 < \arg \lambda < \pi$ corresponds to $-\pi/2 < \arg \nu < \pi/2$.

\begin{lem}\label{l:appser}
Let $N \in \mathbb N_0$, let $(\mathcal B, \| \cdot\|)$  be a Banach space, and for each $j\in \mathbb N$ and each $k\le Nj$, let $V_{j,k}$ be an element of $\mathcal B$. Let $\nu_0 \in (0,1)$, and suppose the series
\[
 \sum_{j=1}^\infty \sum_{k = -\infty}^{Nj} \|V_{j,k}\| |\log \nu_0|^k \nu_0^j.
\]
converges. Then for any $\varphi>0$, there is $\nu_1>0$ such that the series
\begin{equation}\label{e:vseries}
\sum_{j=1}^\infty \sum_{k = -\infty}^{Nj} V_{j,k} (\log \nu)^k \nu^j,
\end{equation}
and 
\begin{equation}\label{e:v'series}
 \sum_{j=2}^\infty \sum_{k = -\infty}^{Nj} V_{j,k} \big( k  + j  \log \nu\big) (\log \nu)^{k-1} \nu^{j-1}.
\end{equation} 
%Then the series
%\begin{equation}\label{e:vseries}
%\sum_{j=1}^\infty \sum_{k = -k_j}^{Nj} V_{j,k}  (\log \nu)^k \nu^j
%\end{equation}
converge absolutely in $\mathcal B$, uniformly on $ U_{\nu_1,\varphi}:=\{\nu \in \Lambda \colon 0< |\nu| < \nu_1 \text{ and } |\arg \nu| < \varphi\}.$
\end{lem}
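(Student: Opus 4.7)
The plan is to treat \eqref{e:vseries} and \eqref{e:v'series} in turn, both by direct term-by-term domination against the scalar series provided by the hypothesis.

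For \eqref{e:vseries}, I will compare each term to its counterpart at the reference $\nu_0$. Writing $r = |\nu|$, the identity $|\log \nu|^2 = (\log r)^2 + (\arg \nu)^2$ yields $|\log \nu| \le \sqrt{2}\,|\log r|$ once $|\log r| \ge \varphi$. For $k \ge 0$ with $k \le Nj$, this gives
\[
\|V_{j,k}\|\,|\log \nu|^k\, r^j \ \le\ \eta^j\, \|V_{j,k}\|\,|\log \nu_0|^k\, \nu_0^j,\qquad \eta := \Bigl(\tfrac{\sqrt{2}\,|\log r|}{|\log \nu_0|}\Bigr)^{\!N}\frac{r}{\nu_0},
\]
by maximizing over $k$ at the endpoint $k = Nj$ (where the base exceeds $1$ and $r/\nu_0 < 1$). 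Since $r|\log r|^N \to 0$ as $r \to 0$, one can shrink $\nu_1$ to force $\eta \le 1/2$ throughout $U_{\nu_1,\varphi}$. For $k < 0$, the monotonicity $|\log \nu| \ge |\log r| \ge |\log \nu_0|$ yields the even sharper bound $(r/\nu_0)^j \|V_{j,k}\||\log\nu_0|^k\nu_0^j$. Summing over $(j,k)$ then controls the entire series by a constant multiple of $\eta M$, which is uniformly small on $U_{\nu_1,\varphi}$.

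For \eqref{e:v'series} I will use $|k + j\log \nu| \le |k| + j|\log \nu|$ and treat the resulting two pieces separately, each time writing $r^{j-1} = r^j/r$. The $j|\log \nu|$-piece equals $r^{-1}\sum_{j \ge 2} j\, S_j(\nu)$ with $S_j(\nu) := \sum_k \|V_{j,k}\|\,|\log \nu|^k r^j \le \eta^j M_j$ from the first stage, where $M_j$ is the $j$th inner sum from the hypothesis. The sharp identity $\sum_{j \ge 2} j \eta^j = \eta^2(2-\eta)/(1-\eta)^2$ then gives
\[
r^{-1}\sum_{j \ge 2}j\, S_j(\nu) \ \le\ C\,M\,\eta^2/r \ \le\ C'\, |\log r|^{2N}\, r / \nu_0^2,
\]
which tends to $0$ as $r \to 0$. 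The $|k|$-piece with $k \ge 1$ is handled via $|k| \le Nj$, reducing it to the previous estimate multiplied by $N/|\log \nu|$. For the $|k|$-piece with $k = -m \le -1$, I will introduce a second, slightly larger reference $\nu_0' \in (\nu_0, \nu_1)$ (at which the scalar series still converges by the first stage) and write $m|\log \nu|^{-m} = m(|\log \nu_0'|/|\log \nu|)^m\, |\log \nu_0'|^{-m}$; shrinking $\nu_1$ so that $|\log \nu_0'|/|\log \nu| \le 1/2$ makes $m\,2^{-m}$ uniformly bounded in $m$, and the extra $|k|$ factor is absorbed into a convergent geometric series.

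The main obstacle is the $j|\log \nu|$-piece of \eqref{e:v'series}: any crude bound of $\sum_{j \ge 2} j\eta^j$ of order $O(1)$ or $O(\eta)$ produces a spurious $1/r$ prefactor that would suggest divergence as $r \to 0$. It is essential to retain the sharp $O(\eta^2)$ decay of this tail as $\eta \to 0$, so that the resulting $\eta^2/r = C|\log r|^{2N}\, r/\nu_0^2$ genuinely tends to $0$. The model coefficients $V_{j,k} = \delta_{k, Nj}$, for which \eqref{e:v'series} is explicitly of order $|\log r|^{2N}\, r$ on sectors, confirm that this refined rate is sharp and that the apparent $1/r$ in the naive estimate is purely an artifact of losing the $\eta^2$ factor.
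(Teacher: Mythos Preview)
Your argument is essentially correct and reaches the conclusion, but there is a slip in the treatment of the $|k|$-piece with $k\le -1$. You write ``introduce a second, slightly larger reference $\nu_0'\in(\nu_0,\nu_1)$ (at which the scalar series still converges by the first stage)''. This interval is empty: your own first stage already forces $\nu_1<\nu_0$ (to make $\eta\le 1/2$), and in any case the hypothesis gives no convergence at points $\nu_0'>\nu_0$ (take $V_{j,0}=j^{-2}\nu_0^{-j}$, all other $V_{j,k}=0$). The fix is immediate and no auxiliary point is needed: use $\nu_0$ itself. Once $\nu_1$ is small enough that $|\log\nu_0|/|\log\nu|\le 1/2$ on $U_{\nu_1,\varphi}$, one has $m(|\log\nu_0|/|\log\nu|)^m\le m2^{-m}\le 1$, hence
\[
\|V_{j,-m}\|\,m\,|\log\nu|^{-m-1}r^{j-1}\ \le\ \|V_{j,-m}\|\,|\log\nu_0|^{-m}\,|\log\nu|^{-1}r^{j-1},
\]
and summing over $m\ge 1$, $j\ge 2$ gives a bound of order $rM/(\nu_0^2|\log\nu|)$ by the same $(r/\nu_0)^{j-1}$ trick you use elsewhere.

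Your route differs from the paper's in organization rather than in substance. For \eqref{e:vseries} the paper avoids splitting on the sign of $k$: it writes $|(\log\nu)^k\nu^j|=|\log\nu|^{k-Nj}(|\log\nu|^N|\nu|)^j$ and observes that each factor is bounded by its value at $\nu_0$, giving the termwise bound $\|V_{j,k}\||\log\nu_0|^k\nu_0^j$ in one line. For \eqref{e:v'series} the paper does not split $|k+j\log\nu|$ additively; instead it splits only on the sign of $k$. For $k<0$ the extra factor is absorbed via $|k|2^{k}$ and $j2^{1-j}$, just as you do. For $k\ge 0$ it uses $|k+j\log\nu|\le 2j|\log\nu|$ and replaces the exponent $N$ by an auxiliary $M=2N+1$, so that $(|\log\nu|^M|\nu|)^{j-1}\le(|\log\nu_0|^M\nu_0)^{j-1}$ absorbs the missing power of $\nu$ directly, and the leftover $j2^{(N+1)(2-j)}$ is bounded. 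Your $\eta$-tracking is more explicit and delivers the sharp quantitative rate $|\log r|^{2N}r$ that you verify on the model $V_{j,k}=\delta_{k,Nj}$; the paper's version is shorter but yields only the termwise domination without tracking the decay in $r$.
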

Since the terms of \eqref{e:vseries} are holomorphic, it follows that for any $\varphi>0$, there is $\nu_1>0$ such that the function  $f(\nu)$ given by \eqref{e:vseries}
is holomorphic on $U_{\nu_1,\varphi}$, with $f'(\nu)$ given by term-by-term differentiation of \eqref{e:vseries} (see Theorem 1 of Chapter 5 of \cite{ahlfors}). Moreover, since the series for $f'$ is of the same form as the series for $f$, by induction these series may be freely differentiated term by term. Note, however, that  convergence is uniform on $U_{\nu_1,\varphi}$ only if we omit terms whose norm goes to $\infty$ as $|\nu| \to 0$, and the value of $\nu_1$ depends on the number of differentiations as well as on $\varphi$.

\begin{proof}
For \eqref{e:vseries}, observe that for $\nu \in U_{\nu_1,\varphi}$ we have
\begin{equation}\label{e:lnnuknu}
 |\log \nu|^N |\nu|  \le |\log |\nu| + i \varphi|^N |\nu| \le |\log \nu_1 + i \varphi|^N \nu_1 \le |\log \nu_0|^N \nu_0, 
\end{equation}
when $\nu_1>0$ is small enough. Combining \eqref{e:lnnuknu} with the fact that $|\log \nu|^{k-Nj} \le |\log \nu_0|^{k-Nj}$ when $k \le Nj$, and with $ |(\log \nu)^k \nu^j| =    |\log \nu|^{k-Nj} \big(|\log \nu|^N |\nu|\big)^j $, we obtain
\[\begin{split}
 \sum_{j=1}^\infty  \sum_{k = -\infty}^{Nj} \|V_{j,k}  (\log \nu)^k \nu^j\| \le \sum_{j=1}^\infty  \sum_{k = -\infty}^{Nj} \|V_{j,k}\|  |\log \nu_0|^k |\nu_0|^j,
\end{split}\]
which implies the absolute convergence of the series \eqref{e:vseries}, uniformly on $U_{\nu_1,\varphi}$. %Since the terms of the series \eqref{e:vseries} are holomorphic, it follows that the sum $V(\nu)$ is holomorphic, with $V'(\nu)$ given by term-by-term differentiation (see Theorem 1 of Chapter 5 of \cite{ahlfors}).

For \eqref{e:v'series}, it is enough to show that there exist positive constants $\nu_1$  and $C$ such that
\begin{equation}\label{e:kjlognu}
 \big| k  + j  \log \nu\big| |\log \nu|^{k-1} |\nu|^{j-1} \le C |\log \nu_0|^k \nu_0^j,
\end{equation}
whenever $\nu \in U_{\nu_1,\varphi}$,  and $ k \le N j$. 

The case $k<0$ is easier. It suffices to take $\nu_1$ small enough that we have $|\log \nu / \log \nu_0|^k \le 2^{k}$ and $|\nu/\nu_0|^{j-1} \le 2^{1-j}$, and then put $C = \sup \{| k  + j  \log \nu| |\log \nu|^{-1} 2^k 2^{1-j} \colon j \ge 2, \, k < 0, \, \nu \in U_{\nu_1,\varphi}\}$.

For the case $k \ge 0$, observe that since $|k + j \log \nu| \le j(N + |\log \nu|) \le 2 j |\log \nu|$ when $\nu_1 \le e^{-N}$, it is enough to obtain
\[
2j|\log \nu|^k |\nu|^{j-1} \le C |\log \nu_0|^k \nu_0^j.
\]
Analogously to \eqref{e:lnnuknu}, we have $(|\log \nu|^{M} |\nu|)^{j-1} \le (|\log \nu_0|^{M} \nu_0)^{j-1}$ for all $\nu \in U$ when $\nu_1>0$ is small enough, with $M \in \mathbb N$ to be determined later, and so it is enough to obtain
\[
2j|\log \nu|^{k-M(j-1)}   \le C\nu_0 |\log \nu_0|^{k-M(j-1)}.
\]
Since $k - Nj \le 0$, we have $|\log \nu|^{k-Nj} \le |\log \nu_0|^{k-Nj}$, and so it is enough to obtain
\[
2j|\log \nu|^{Nj - M(j-1)}   \le C\nu_0|\log \nu_0|^{Nj-M(j-1)}.
\]
We now take $M = 2N+1$, so that the exponent $Nj - M(j-1) = 2N+1 - (N+1)j$ is negative and decreasing for $j \ge 2$. We further require  $\nu_1 \le \nu_0^2$, so that $2|\log \nu|^{-1} \le  |\log \nu_0|^{-1}$. It is then enough to obtain
\[
j2^{(N+1)(2-j)}   \le C\nu_0,
\]
which we do by putting $C = \max \{j2^{(N+1)(2-j)} /\nu_0 \colon j \ge 2\} = 2/\nu_0$.
\end{proof}

\section{Scattering by a disk}\label{a:disk}

Let $\mathscr{O} = \{x \in \mathbb R^2 \colon |x| \le \rho\}$ for some $\rho > 0$. Solutions to the Helmholtz equation $(-\Delta - \lambda^2)u = 0$ can be written in polar coordinates in terms of the Hankel functions (see \cite[Section 7.4.1]{olver}) as
\[
 u(r,\theta) = \sum_{\ell = -\infty}^\infty \Big(a_\ell H_\ell^{(1)}(\lambda r) + b_\ell H_\ell^{(2)}(\lambda r)\Big) e^{i\ell \theta},
\]
and they vanish at $\partial \mathscr O$ if and only if
\[
 a_\ell H_\ell^{(1)}(\lambda \rho) + b_\ell H_\ell^{(2)}(\lambda \rho) = 0, \qquad \text{for every } \ell.
\]
%Since, by the generating function for $J_\ell$ (see equation (9.06) of Chapter 2 of \cite{olver}), we have
%\[
% e^{i\lambda r \cos(\varphi+\theta)} = \sum_{\ell = - \infty}^\infty i^\ell e^{ i \ell \varphi} J_\ell(\lambda r) e^{i \ell \theta} = \frac 12 \sum_{\ell = - \infty}^\infty i^\ell e^{ i \ell \varphi}\Big(H_\ell^{(1)}(\lambda r) + H_\ell^{(2)}(\lambda r)\Big) e^{i \ell \theta},
%\]
%for any $\varphi$, it follows that when the obstacle is absent we have $a_\ell = b_\ell$.
%When the obstacle is absent we have $a_\ell = b_\ell$ if and only if $u$ has no singularity at $r=0$. 
Consequently, with respect to the basis $\{e^{i\ell\theta}\}_{\ell \in \mathbb Z},$ the scattering matrix  $S(\lambda)$ is the diagonal matrix mapping the incoming data (the $b_\ell)$ to the outgoing data (the $a_\ell$), i.e. its $\ell$-th entry is given by $-H^{(2)}_\ell(\lambda \rho)/H^{(1)}_\ell(\lambda \rho)$.
Hence,
%\[
% \det S(\lambda) = \prod_{\ell = -\infty}^\infty\Big(- \frac {H^{(2)}_\ell(\lambda \rho)}{H^{(1)}_\ell(\lambda \rho)}\Big),
%\]
%and so
\begin{equation}\label{e:slh1h2}
 \sigma(\lambda) = \frac 1 {2\pi i} \log \det S(\lambda) = \frac 1 {2\pi i} \sum_{\ell = -\infty}^\infty \log \Big(- \frac {H^{(2)}_\ell(\lambda \rho)}{H^{(1)}_\ell(\lambda \rho)}\Big).
\end{equation}

\noindent\textbf{Acknowledgments.} 
It is a pleasure to thank Maciej Zworski for many helpful discussions, including suggesting the application to scattering phase asymptotics and observing that the constant $a$ appearing in our expansions has a natural interpretation in terms of the logarithmic capacity. %Thanks also to Tom ter Elst for helpful discussions concerning the Dirichlet-to-Neumann operator.  
The authors thank Tom ter Elst, Hamid Hezari, and Steve Hofmann for helpful conversations, and
gratefully acknowledge the partial support of the Simons Foundation (TC, collaboration grant for mathematicians) and the National Science Foundation (KD, Grant DMS-1708511).

\end{document}